\documentclass[11pt]{amsart}

\title[Essential surfaces in handlebody-knot exteriors]
{Essential surfaces of non-negative Euler characteristic in genus two handlebody exteriors}

\author{Yuya Koda}
\author{Makoto Ozawa  \\ \\
with an appendix by Cameron Gordon}

\thanks{The first-named author is supported in part by
Grant-in-Aid for Young Scientists (B) (No. 20525167), Japan Society for the Promotion of Science, and by 
JSPS Postdoctoral Fellowships for Research Abroad.}

\address{
Mathematical Institute \newline
\indent Tohoku University, Sendai, 980-8578, Japan \newline 
\indent and \newline
\indent (Temporary) Dipartimento di Matematica  \newline
\indent Universit\`{a} di Pisa, Largo Bruno Pontecorvo 5, 56127 Pisa, Italy}
\email{koda@math.tohoku.ac.jp}

\thanks{The second-named author is supported in part by
Grant-in-Aid for Scientific Research (C) (No. 23540105), Japan Society for the Promotion of Science.}

\address{
Department of Natural Sciences, Faculty of Arts and Sciences \newline
\indent Komazawa University, 1-23-1 Komazawa, Setagaya-ku, Tokyo, 154-8525, Japan}
\email{w3c@komazawa-u.ac.jp}

\address{
Department of Mathematics \newline
\indent The University of Texas at Austin, Austin, TX 78712, USA}
\email{gordon@math.utexas.edu}


\usepackage{amsfonts,amsmath,amssymb,amscd}

\usepackage{amsthm}
\usepackage{latexsym}
\usepackage[dvips]{graphicx}
\usepackage[dvips]{psfrag}
\usepackage[dvips]{color}
\def\qed{\hfill $\Box$} 

\theoremstyle{plain}
\newtheorem*{theorem*}{Theorem}
\newtheorem*{lemma*} {Lemma}
\newtheorem*{corollary*} {Corollary}
\newtheorem*{proposition*}{Proposition}
\newtheorem*{conjecture*}{Conjecture}
\newtheorem{theorem}{Theorem}[section]
\newtheorem{lemma}[theorem]{Lemma}
\newtheorem{corollary}[theorem]{Corollary}

\newtheorem{conjecture}[theorem]{Conjecture}
\newtheorem{claim}{Claim}

\theoremstyle{remark}

\newtheorem*{remark}{Remark}

\newtheorem*{notation}{Notation}
\newtheorem*{example}{Example}

\theoremstyle{definition}

\textwidth=5.8in
\voffset=0.25in
\oddsidemargin.25in
\evensidemargin.25in
\marginparwidth=.85in

\newcommand{\Integer}{\mathbb{Z}}

\newcommand{\Int}{\mathrm{Int}}

\newcommand{\Nbd}{N}

\makeatletter

\makeatother



\begin{document}
\maketitle

\begin{abstract}
We provide a classification of the essential surfaces of 
non-negative Euler characteristic in the exteriors of 
genus two handlebodies embedded in the 3-sphere. 
\end{abstract}

\vspace{1em}

\begin{small}
\hspace{2em}  \textbf{2010 Mathematics Subject Classification}:
57M25; 57M15


\hspace{2em} 
\textbf{Keywords}:
knot; handlebody; essential surface
%
\end{small}

\section*{Introduction}

%

As is well-known, 
the set of knots in the 3-sphere is classified 
into four classes;  
the {\it trivial knot}, {\it torus knots}, 
{\it satellite knots} and {\it hyperbolic knots}, 
depending on the types of the essential surfaces of 
non-negative Euler characteristic 
lying in their exteriors. 
The trivial knot is the only knot that 
contains an essential disk 
in its exterior, while the torus 
knot exteriors contain 
essential annuli but 
do not contain essential tori. 
The class of satellite knots consists of 
knots admitting essential tori. 
Classical studies on knots prove that the essential annuli in 
the exterior of torus knots or satellite knots are very limited, 
that is, 
each of them is either a cabling annulus or 
that which can be extended to decomposing spheres (cf. Lemma \ref{lem:BZ85}). 
The class of hyperbolic knots 
consists of knots whose exteriors are {\it simple}, 
that is, do not admit 
any essential surfaces of Euler characteristic 
at least zero. 
By Thurston's Hyperbolization Theorem \cite{Thu82, Mor84, Ota96, Ota98, Kap01}, 
the complement of each hyperbolic knot admits 
a complete hyperbolic metric of finite volume. 
A great many studies on knots have been based on this classification. 

A genus $g$ handlebody $V$ 
embedded in the 3-sphere $S^3$, where 
$g$ is a non-negative integer, is called a 
{\it genus $g$ handlebody-knot} and denoted by $(S^3, V)$. 
When $g$ equals one, the study of handlebody-knots 
coincides with the classical knot theory. 
On the other hand, 
the study of handlebody-knots 
whose exteriors are also handlebodies is related to 
the theory of Heegaard splittings. 
By Thurston's Hyperbolization Theorem again, 
the exterior $E(V)$ 
of handlebody-knot $V$ of genus at least two 
is simple if and only if $E(V)$ admits a hyperbolic structure 
with totally geodesic boundary. 
Otherwise, 
the configurations of 
essential surfaces of non-negative Euler characteristic in 
the exterior $E(V)$ are much more complicated in general 
compare to the case of knots. 
The aim of this paper is to classify these essential surfaces 
in the exteriors of genus two handlebody-knots. 
In fact, we classify, without overlap, the essential disks into three types 
(cf. Section \ref{sec:Classification of essential disks in the exteriors of genus two handlebodies embedded in the 3-sphere}), 
the essential annuli into four types (cf. Section \ref{sec:Classification of essential annuli in the exteriors of genus two handlebodies embedded in the 3-sphere}), 
the essential M\"obius bands into two types 
(cf. Section 
\ref{sec:Classification of essential Mobius bands in 
the exteriors of genus two handlebodies embedded in the 3-sphere}), and  
the essential tori into three types 
(cf. Section 
\ref{sec:Classification of essential tori in 
the exteriors of genus two handlebodies embedded in the 3-sphere}). 
This should be contrasted with the case of knots; 
the essential annuli, for example, in knot exteriors 
can be classified into two types, as was mentioned above. 
To obtain the above classification, we fully use the 
results on essential planar surfaces and punctured tori 
properly embedded in the exteriors of knots, which are strongly 
related to the study of Dehn surgeries on knots in the 3-sphere 
that produce reducible or toroidal 3-manifolds.  

In \cite{Mot90}, Motto gave an infinite family of 
genus two handlebody-knots, and using essential annuli lying in their exteriors, 
he showed that the handlebody-knots in the family 
are mutually distinct 
whereas they have homeomorphic exteriors. 
In \cite{LL12}, Lee and Lee provided other infinite families of 
genus two handlebody-knots such that 
the handlebody-knots in each of the families 
are mutually distinct whereas they have homeomorphic exteriors. 
Detailed description of 
essential annuli in the exteriors of the handlebody-knots 
again played an important role in their paper. 
Also, in \cite{EO12}, Eudave-Mu\~{n}oz and 
the second-named author determined 
essential annuli that can be extended to 2-decomposing spheres 
in tunnel number one, genus two handlebody-knot exteriors and 
they characterized their summands by $2$-decomposing 
spheres. 
Each of the above families of essential annuli 
is entirely contained in one type of 
the essential surfaces studied in this paper. 

On the other hand, the first-named author 
defined in \cite{Kod11} the {\it symmetry group} 
of a handlebody-knot. 
This is the group of isotopy classes of 
self-homeomorphisms of $S^3$ leaving the handlebody-knot invariant. 
When the exterior of a genus two handlebody-knot 
is boundary-reducible or simple, 
a finite presentation of its symmetry group 
can be obtained following \cite{Goe33, Sch04, Akb08, Cho08, Kod11}.  
However, apart from a few examples, 
the symmetry groups of the remaining handlebody-knots 
still remain unknown. 
The result in this paper 
would be a beginning step to developing the study of the symmetry groups. 
 
In \cite{IKO12}, Ishii, Kishimoto and the second-named author  
showed the unique decomposition theorem 
with respect to a special kind of 
$2$-decomposing spheres for handlebody-knots of arbitrary genus 
whose exteriors are boundary-irreducible. 
In an appendix of the paper, we prove the same uniqueness theorem 
for arbitrary handlebody-knots. 

Throughout this paper, 
we will work in the piecewise linear category.

\begin{notation}
Let $X$ be a subset of a given polyhedral space $Y$. 
Throughout the paper, we will denote the interior of 
$X$ by $\Int \thinspace X$
and the number of components of $X$ by $\# X$. 
We will use $\Nbd (X; Y)$ to denote a closed regular neighborhood of $X$ in $Y$. 
If the ambient space $Y$ is clear from the context, 
we denote it briefly by $\Nbd (X)$. 
Let $M$ be a 3-manifold. 
Let $L \subset M$ be a submanifold with or without boundary. 
When $L$ is 1 or 2-dimensional, we write 
$E(L) = M \setminus \Int \thinspace \Nbd (L)$. 
When $L$ is of 3-dimension, we write 
$E(L) = M \setminus \Int \thinspace L$. 
We shall often say 
surfaces, compression bodies, 
e.t.c. in an ambient manifold 
to mean the isotopy classes of them. 
\end{notation}


\noindent {\bf Acknowledgments.}
The authors would like to thank Mario Eudave-Mu\~noz 
for his valuable comments related 
to Lemma \ref{lem:handlebodies and Eudave-Munoz knots}. 
This work was carried out while the first-named author was visiting 
Universit\`a di Pisa as a 
JSPS Postdoctoral Fellow for Reserch Abroad. 
A part of the work was done while the second-named author was visiting 
Universit\`a di Pisa. 
They are grateful to the university and its staffs for 
the warm hospitality. 

\section{Preliminaries}
\label{sec:Preliminaries}

Let $M$ be a compact orientable 3-manifold. 
Let $F$ be an orientable 
(possibly not connected) surface 
properly embedded in $M$. 
A disk $D$ embedded in $M$ is called 
a {\it compressing disk} for $F$ if $D \cap F = \partial D$ and 
$\partial D$ is an essential simple closed curve on $F$. 
A disk $D$ embedded in $M$ is called a 
{\it boundary-compressing disk} for $F$ if $D \cap F \subset \partial D$ 
is a single essential arc 
on $F$ and 
$D \cap \partial M = \partial D \setminus \Int \thinspace (D \cap F)$. 
The surface $F$ is said to be 
{\it incompressible} ({\it boundary-incompressible}, respectively) 
if there exists no compressing disk (boundary-compressing
disk, respectively) for $F$. 
The surface $F$ is said to be {\it essential} 
if $F$ is incompressible, boundary-incompressible and not boundary parallel. 
A connected non-orientable surface $F'$ 
properly embedded in $M$ is said to be {\it essential} 
if the {\it frontier} of $\Nbd(F'; M)$, that is, the closure of 
$\partial \Nbd (F'; M) \setminus \partial M$, is essential. 

We recall that a {\it handlebody} is 
a compact orientable 3-manifold 
containing pairwise disjoint essential disks 
such that the manifold obtained by cutting along the disks is a 3-ball. 
The {\it genus} of a handlebody is defined to be the genus of its boundary 
surface. 
The following well-known fact will 
be needed later. See e.g. \cite{Jac80}. 
\begin{lemma}
\label{lem:essential surfaces in a handlebody}
Let $F$ be an essential surface in a handlebody. 
Then $F$ is a disk. 
\end{lemma}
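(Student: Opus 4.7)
The plan is to induct on the genus $g$ of the handlebody $V$. In the base case $g = 0$, where $V$ is a 3-ball, every properly embedded disk is boundary-parallel, and any closed incompressible surface is forced to be a sphere (by triviality of $\pi_1$) that bounds a ball and hence compresses; thus $V$ admits no essential surface at all and the statement holds vacuously. For the inductive step, assume the result for handlebodies of genus less than $g$, take $V$ of genus $g \geq 1$, and pick a non-separating meridian disk $D$ of $V$. I would isotope $F$ so as to minimize the number of components of $F \cap D$, which then consists of arcs and circles properly embedded in $D$.

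First I would show $F \cap D = \emptyset$. There can be no circle components: an innermost such circle $c$ on $D$ bounds a subdisk $D' \subset D$ with $D' \cap F = c$; incompressibility of $F$ forces $c$ to bound a disk $D'' \subset F$, so $D' \cup D''$ is a 2-sphere in the irreducible handlebody $V$, bounding a 3-ball across which $F$ may be isotoped to remove $c$ from $F \cap D$, contradicting minimality. For arcs, take an outermost one $\alpha$, cobounding with an arc of $\partial D \subset \partial V$ an outermost subdisk $D_0 \subset D$ with $D_0 \cap F = \alpha$. If $\alpha$ were essential on $F$, then $D_0$ would be a boundary-compressing disk for $F$, contradicting boundary-incompressibility. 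Hence $\alpha$ cobounds with an arc of $\partial F$ a disk $D_1 \subset F$, and since $D_0$ is outermost, pushing $D_1$ across $D_0$ produces an isotopy of $F$ that eliminates $\alpha$ from $F \cap D$ without creating new intersections, again contradicting minimality.

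Therefore $F$ is disjoint from $D$, and sits inside the handlebody (or pair of handlebodies) $V'$ of total genus $g - 1$ obtained by cutting $V$ along $D$. The main obstacle I anticipate is verifying that $F$ remains essential in $V'$: incompressibility and non-boundary-parallelism transfer immediately from $V$ to $V'$, but a boundary-compressing disk $E$ for $F$ in $V'$ could have its $\partial V'$-arc running across one of the two scar copies of $D$ in $\partial V'$. I would handle this by re-gluing to view $E$ as a disk in $V$ whose boundary lies on $F \cup D$, then perform an outermost-arc analysis of $E \cap D$: the resulting outermost pieces produce either a genuine boundary-compressing disk for $F$ in $V$ or an isotopy reducing $|E \cap D|$, ultimately contradicting essentiality of $F$ in $V$. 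Once essentiality of $F$ in $V'$ is established, the induction hypothesis yields that each component of $F$ is a disk, completing the proof.
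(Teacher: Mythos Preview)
The paper does not supply a proof of this lemma; it records it as a well-known fact with a reference to Jaco's lecture notes. Your overall strategy---making $F$ disjoint from a meridian disk via innermost-circle and outermost-arc arguments, then reducing the genus---is the standard one.

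There is, however, a genuine error in the inductive step. You assert that ``non-boundary-parallelism transfers immediately from $V$ to $V'$,'' but this is false: take $V$ to be a solid torus, $D$ a meridian disk, and $F$ a second meridian disk; then $F$ is essential in $V$ yet boundary parallel in the $3$-ball $V'$. So you cannot simply declare $F$ essential in $V'$ and invoke the induction hypothesis. One repair is to note that a boundary-parallel surface which is not a disk always admits a boundary compression (cross an essential arc with the interval in the product region), so once boundary-incompressibility of $F$ in $V'$ is established you may split into cases: either $F$ is essential in $V'$ and induction applies, or $F$ is boundary parallel in $V'$ and hence already a disk. Your sketch of the boundary-incompressibility step is also imprecise---since $E \subset V'$ one has $E \cap D = \emptyset$; what you presumably intend is an outermost-arc argument on the intersections of $\partial E$ with the scar disks $D^{\pm}$ in $\partial V'$.

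A cleaner route avoids the induction altogether: isotope $F$ off a \emph{complete} meridian system of $V$ using the same innermost/outermost arguments (which use only incompressibility and boundary-incompressibility of $F$ in $V$ itself), so that $F$ lies in a $3$-ball. Incompressibility alone then forces $\pi_1(F)=1$, so $F$ is a disk or a sphere, and a sphere is ruled out by irreducibility. No transfer of essentiality to a smaller handlebody is required.
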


The essential annuli in knot exteriors are classified as follows. 
See e.g. \cite{BZ85}. 
\begin{lemma}
\label{lem:BZ85}
Let $K$ be a knot in $S^3$. If $E(K)$ contains an 
\label{lem:BZ85}
essential annulus $A$, then exactly one of the following holds: 
\begin{enumerate} 
\item
$K$ is a torus knot or a cable knot and $A$ is its cabling annulus; 
\item
$K$ is a composite knot and $A$ can be extended to a decomposing sphere for
$K$. 
\end{enumerate}
\end{lemma}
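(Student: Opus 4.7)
The plan is to perform a case analysis on the slope $s$ on the boundary torus $\partial E(K)$ carried by $\partial A$; since $A$ is an annulus, its two boundary components are parallel essential simple closed curves of a common slope.

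Suppose first that $s$ is the meridian slope. Then we cap off each component of $\partial A$ by a meridian disk of the solid torus $N(K)$ to obtain a 2-sphere $S$ in $S^3$ meeting $K$ transversely in two points. If $S$ bounded a 3-ball intersecting $K$ in an unknotted arc, then $A$ would be isotopic rel $\partial$ into $\partial E(K)$, hence boundary parallel, contradicting its essentiality. Therefore $S$ is a decomposing sphere for $K$ and $K$ is composite, giving conclusion (2).

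Suppose instead that $s$ is not meridional. Then the two components of $\partial A$ cobound an annulus $A'$ on $\partial N(K)$, and $T := A \cup A'$ is a torus in $S^3$. By the classical fact that every torus in $S^3$ bounds a solid torus on at least one side, at least one of the two components of $S^3 \setminus T$ has closure a solid torus $V$. Tracking which side of $T$ the knot $K$ lies on and using that $s$ is non-meridional, one shows that $K$ sits in $V$ as a $(p,q)$-curve on a torus parallel to $\partial V$ with $|q| \geq 2$, making $K$ a cable of the core of $V$ and $A$ its cabling annulus. If the complementary region of $T$ is also a solid torus, then $T$ is standardly embedded in $S^3$ and $K$ is a torus knot; otherwise the core of $V$ is a nontrivial knot and $K$ is a cable knot. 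This is conclusion (1). The two conclusions are mutually exclusive since they are distinguished by the boundary slope of $A$.

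The main obstacle is the last step of the non-meridional case: using essentiality of $A$ and the non-meridional slope to rule out degenerate configurations (such as $K$ being isotopic to the core of $V$, which would force $A$ to be boundary parallel, or $K$ being a longitude of $V$, which is incompatible with $A$ being a genuine annulus and not a disk), and hence to conclude that $A$ is a bona fide cabling annulus. This requires careful use of Alexander's theorem for tori together with the incompressibility and boundary-incompressibility of $A$.
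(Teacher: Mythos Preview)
The paper does not supply its own proof of this lemma; it is stated as a classical fact with a reference to Burde--Zieschang \cite{BZ85}. Your sketch is essentially the standard argument found there, so there is nothing to compare against beyond the literature.

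That said, a remark on your non-meridional case. Your outline forms a single torus $T=A\cup A'$ and asserts that $K$ sits in the resulting solid torus $V$ as a $(p,q)$-curve with $|q|\ge 2$. The clean way to make this precise is to note first that $A$ is separating in $E(K)$ (an easy homology check), giving two pieces $M_1,M_2$ with $\partial M_i$ the tori $A\cup A'$ and $A\cup A''$. Alexander's theorem applied to \emph{each} torus shows that at least one of $M_1,M_2$ is a solid torus; incompressibility and $\partial$-incompressibility of $A$ force the winding number of $A$ in any such solid torus to be at least $2$. If both $M_i$ are solid tori, $E(K)$ is Seifert fibred over the disk with two exceptional fibres and $K$ is a torus knot; if only one is, the other side exhibits $K$ as a cable of a nontrivial core, and $A$ is the cabling annulus. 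You correctly flag this step as the main obstacle, but your phrasing (``tracking which side of $T$ the knot $K$ lies on'') suggests working with only one of the two tori, which does not by itself yield the conclusion. Handling both pieces is what actually drives the dichotomy.
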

We note that the above lemma can be generalized as 
a classification of the essential annuli in the exteriors of links in $S^3$. 
In fact, if $A$ is an essential annulus in the exterior of 
a link, 
then 
$A$ is a cabling annulus, 
$A$ can be extended to a decomposing sphere, or 
$A$ connects two components of the link, where at least one of the boundary components of $A$ has a
meridional or integral boundary-slope. 
%
%
%

As a direct corollary of Lemma \ref{lem:BZ85}, we can also 
classify the essential M\"obius bands 
in knot exteriors as follows: 

\begin{lemma}
\label{lem:essential Mobius bands in knot exteriors}
Let $K$ be a knot in $S^3$. If $E(K)$ contains 
an essential M\"{o}bius band $F$, then $K$ is either 
an $(n,2)$-torus knot or 
an $(n,2)$-cable knot for an odd integer $n$, and 
the frontier of $N(F)$ 
satisfies $(1)$ in Lemma $\ref{lem:BZ85}$. 
\end{lemma}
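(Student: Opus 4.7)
The plan is to pass from $F$ to the frontier annulus of a regular neighborhood of $F$, apply Lemma \ref{lem:BZ85}, rule out the decomposing-sphere alternative on homological grounds, and then extract the cabling indices from the Möbius band structure on the $\Nbd(F)$ side.

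Since $E(K)$ is orientable and $F$ is not, $\Nbd(F) := \Nbd(F; E(K))$ is the orientable twisted $I$-bundle over the Möbius band, hence a solid torus. The torus $\partial \Nbd(F)$ meets $\partial N(K)$ in an annular neighborhood $A_0$ of $\partial F$, and $A := \overline{\partial \Nbd(F) \setminus A_0}$ is a single annulus properly embedded in $E(K)$, essential by the very definition of essentiality of a non-orientable surface. Applying Lemma \ref{lem:BZ85} to $A$, I would rule out the decomposing-sphere case as follows: if both components of $\partial A$ were meridional on $\partial N(K)$, then so would be $\partial F$ (since $\partial F$ is parallel on $\partial N(K)$ to each component of $\partial A$), and capping off $F$ with a meridian disk of $N(K)$ would produce a closed non-orientable surface in $S^3$, contradicting $H_2(S^3; \Integer/2) = 0$. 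Consequently $K$ is a $(p,q)$-torus or $(p,q)$-cable knot with $q \ge 2$ and $A$ is its cabling annulus, which already yields the final clause of the lemma.

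It remains to see that one of $p,q$ equals $2$. The cabling annulus $A$ cuts $E(K)$ into two pieces, of which $\Nbd(F)$ is one. On this side, the cabling Seifert fibration of $E(K)$ restricts to a Seifert fibration of $\Nbd(F)$ over a disk whose unique exceptional fiber is the core of $\Nbd(F)$, with multiplicity $m \in \{p,q\}$, so each component of $\partial A$ is a regular Seifert fiber wrapping $m$ times around the core of $\Nbd(F)$. On the other hand, the core of the Möbius band $F$ is an essential simple closed curve in $\Nbd(F)$ representing a generator of $\pi_1(\Nbd(F)) \cong \Integer$ and is therefore isotopic to the core of $\Nbd(F)$, so $\partial F$ wraps exactly twice around that core. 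Since $\partial F$ is isotopic on $\partial \Nbd(F)$ to a component of $\partial A$, matching winding numbers forces $m = 2$, and $K$ is an $(n,2)$-torus or $(n,2)$-cable knot with $n$ odd. The step I expect to be the main obstacle is the Seifert-fibration/slope comparison on $\partial \Nbd(F)$ that identifies the cabling multiplicity with the wrapping number of $\partial F$; the remaining ingredients are direct applications of the hypothesis on $F$ and Lemma \ref{lem:BZ85}.
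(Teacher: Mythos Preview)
Your proposal is correct. The paper does not supply a proof of this lemma; it merely introduces it as ``a direct corollary of Lemma~\ref{lem:BZ85}'' and states the result. Your argument is exactly the intended elaboration: pass to the frontier annulus $A$ of $\Nbd(F)$, invoke Lemma~\ref{lem:BZ85}, exclude the decomposing-sphere alternative because a meridional $\partial F$ would yield an embedded $\mathbb{RP}^2$ in $S^3$, and then read off the index~$2$ from the fact that $\partial F$ (and hence each component of $\partial A$) wraps twice around the core of the solid torus $\Nbd(F)$. There is no alternative route in the paper to compare against, and the step you flagged as the potential obstacle---matching the cabling multiplicity on the $\Nbd(F)$ side with the winding number of $\partial F$---is fine: you do not really need the Seifert fibration, since the cabling index on the solid-torus side is by definition the winding number of $\partial A$ about the core, and $\partial A$ is parallel to $\partial F$ on $\partial \Nbd(F)$.
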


In Sections 
\ref{sec:Classification of essential annuli in the exteriors of genus two handlebodies embedded in the 3-sphere} and 
\ref{sec:Classification of essential Mobius bands in 
the exteriors of genus two handlebodies embedded in the 3-sphere}, 
we obtain the same type of classifications as 
Lemmas \ref{lem:BZ85} and \ref{lem:essential Mobius bands in knot exteriors}, 
respectively, for genus two handlebody-knots.

Let $M$ be a compact orientable 3-manifold. 
Let $F$ be an orientable  
surface (possibly not connected) properly embedded in $M$.  
Let $D$ be a compressing disk for $F$. 
Then we have a new proper surface $F'$ by cutting $F$ 
along $\partial D$ and pasting two copies of $D$ to it. 
We say that {\it $F'$ is 
obtained by 
compressing $F$ along $D$}. 

Let $M$ be a 3-manifold. 
We recall that $M$ is said to be {\it reducible} if it contains  
a sphere that does not bound a 3-ball in $M$. 
Otherwise, $M$ is said to be {\it irreducible}. 
Also, $M$ is said to be {\it boundary-reducible} 
if it contains an essential disk.  
Otherwise, $M$ is said to be {\it boundary-irreducible}. 

\begin{lemma}
\label{lem:annulus with essential boundary is esential}
Let $M$ be a compact, orientable, irreducible, boundary-irreducible $3$-manifold 
such that $\partial M$ is a closed surface of genus at least two. 
Let $A$ be an annulus properly enbedded in $M$. 
If each component of $\partial A$ is essential on $\partial M$ and 
$A$ is not parallel to the boundary of $M$, then 
$A$ is essential in $M$.  
\end{lemma}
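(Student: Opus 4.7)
The plan is to verify the two remaining conditions for essentiality of $A$—incompressibility and boundary-incompressibility—by contradiction. Combined with the hypothesis that $A$ is not parallel to the boundary of $M$, these give essentiality.

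For incompressibility, suppose $D$ is a compressing disk for $A$. Since $A$ is an annulus, $\partial D$ must be parallel to the core of $A$, and compressing $A$ along $D$ produces two disks in $M$ whose boundaries are (parallel to) the components of $\partial A$. These components are essential on $\partial M$ by hypothesis, so the resulting disks are essential disks in $M$, contradicting the boundary-irreducibility of $M$.

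For boundary-incompressibility, suppose $E$ is a boundary-compressing disk for $A$. Set $\alpha = E \cap A$ (which is necessarily a spanning arc of $A$) and $\beta = E \cap \partial M$ (an arc joining the two components of $\partial A$). Boundary-compressing $A$ along $E$ yields a disk $A' \subset M$ whose boundary $\gamma \subset \partial M$ is the band-sum of $\partial A$ along $\beta$. Boundary-irreducibility of $M$ forces $\gamma$ to bound a disk $\Delta \subset \partial M$, and irreducibility then lets the $2$-sphere $A' \cup \Delta$ bound a $3$-ball $B \subset M$. To conclude, I would analyze $N = \Nbd(A \cup E; M)$. Since $A \cup E$ collapses to the core of $A$, the manifold $N$ is a solid torus; its intersection with $\partial M$ is the pair of pants $P = \Nbd(\partial A \cup \beta; \partial M)$, and the frontier of $N$ in $M$ consists of a disk $F_1$ (isotopic to $A'$) together with an annulus $F_2$. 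The annulus $A$ separates $N$ into two solid tori. One, on the side of $A$ away from $E$, is the trivial product $A \times I$ cobounded by $A$ and $F_2$. The other, on the $E$-side, contains $E$ as a meridian disk; since the core of $A$ meets $\alpha$ transversely in one point, it meets $E$ once, and hence is a longitude of this solid torus, so the two boundary annuli $A$ and $F_1 \cup P_+$ (where $P_+$ is the pair-of-pants component of $P$ cut along $\partial A$) are both longitudinal, forcing this side to be a product $A \times I$ as well. Attaching $B$ to $N$ along $F_1$ replaces $F_1$ by $\Delta$ on the boundary without changing the homeomorphism type, so the corresponding side of $W := N \cup B$ is a product cobounded by $A$ and the annulus $A^* := P_+ \cup \Delta \subset \partial M$. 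Thus $A$ is parallel to $A^* \subset \partial M$, i.e., boundary-parallel, contradicting the hypothesis.

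The main obstacle I anticipate is this last topological analysis, in particular the claim that the $E$-side of $N$ is a product. Establishing this requires identifying $E$ as a meridian disk for that solid torus, using the one-point intersection of the core of $A$ with $E$ to conclude that the components of $\partial A$ are longitudes on its boundary torus, and then recognising a solid torus whose boundary splits into two longitudinal annuli as a product region. Once this picture is firmly in place, the attachment of $B$ along $F_1$ and the identification of the resulting product structure on the $E$-side of $W$ are formal.
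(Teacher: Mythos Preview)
Your argument is correct, but it takes a different route from the paper's for boundary-incompressibility. The paper performs a case analysis on the configuration of $\partial A$ and the arc $\beta$ on $\partial M$: if $a_1,a_2$ are non-parallel, or if they are parallel but $\beta$ lies outside the annulus $A'$ they cobound, then the band-sum $\gamma$ is essential on $\partial M$ and the boundary-compressed disk is an essential disk in $M$, an immediate contradiction. Only in the remaining case ($\beta\subset A'$) does the paper argue boundary-parallelism, and there it does so more directly than you: the boundary-compressing disk $E$ itself sits inside the region $N$ cobounded by $A$ and $A'$, and since $\partial E$ meets each component of $\partial A$ once, $N$ is a solid torus in which $A$ is parallel to $A'\subset\partial M$.

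Your approach trades this case split for a uniform construction: you deduce at once from boundary-irreducibility that $\gamma$ bounds a disk $\Delta\subset\partial M$, and then assemble the parallelism region as $V\cup B$ with $V$ the $E$-side of $\Nbd(A\cup E)$. This is a legitimate shortcut and your identification of $E$ as a meridian disk of $V$ (via the single transverse intersection of the core of $A$ with $\alpha$) is exactly the key point. Two small checks you should make explicit: first, that $\Delta$ is disjoint from $a_1\cup a_2$ (otherwise $a_i$ would bound a disk in $\Delta$, contradicting essentiality of $\partial A$), so that $A^*=P_+\cup\Delta$ really is an embedded annulus in $\partial M$; second, that the ball $B$ lies on the side of the sphere $F_1\cup\Delta$ away from $V$ (this follows because $V\cap\partial M = P_+$ contains $a_1,a_2$, which are not in $\Delta = B\cap\partial M$). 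With those two sentences added, your proof is complete. The paper's version is shorter because the case analysis handles two of three configurations in one line each; yours is more uniform but pays for it with the neighborhood bookkeeping.
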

\begin{proof}
Assume that each component of $\partial A$ is 
non-trivial on $\partial M$ and 
that $A$ is not parallel to $\partial M$. 
If $A$ admits a compressing disk $D_1$ in $M$, 
then each of the disks obtained by 
compressing $A$ along $D_1$ is an essential disk in $M$. 
This contradicts the assumption that 
$M$ is boundary-irreducible. 
Thus it suffices to show that $A$ is boundary-incompressible. 
Assume for contradiction that $A$ admits a 
boundary-compressing disk $D_2$ in $M$. 
Let $D$ be the disks obtained by 
boundary-compressing $A$ along $D_2$. 
We will show that $D$ is an essential disk in $M$. 
Set $\gamma = \partial D_2 \cap \partial M$. 
We note that $\partial D$ is the component of 
$\partial N(\partial A \cup \gamma; \partial M)$ that is not parallel to 
neither component of $\partial A$. 
If the two simple closed curves $\partial A$ are not parallel 
on $\partial M$, then $\partial D$ is not trivial on $\partial M$. 
Hence $D$ is an essential disk in $M$. 
Assume that $\partial A$ consists of 
parallel simple closed curves on $\partial M$. 
Let $A'$ be the sub-annulus of $\partial M$ such that 
$\partial A' = \partial A$. 
If $\gamma$ is not contained in $A'$, then 
$\partial D$ is not trivial on $\partial M$. 
Hence $D$ is an essential disk in $M$. 
If $\gamma$ is contained in $A'$, then 
$D_2$ is an essential disk in a component $N$ of $S^3$ cut off by the torus $A \cup A'$. 
This implies that $N$ is a solid torus and $D_2$ is its meridian disk. 
Moreover, $\partial D_2$ intersects each component of $\partial A$ once and transversely. 
Hence $A$ is parallel to $\partial M$ through $N$. 
This is a contradiction. 
\end{proof}

Let $M$ be a compact orientable 3-manifold. 
Let $F$ be an orientable  
surface (possibly not connected) properly embedded in $M$.  
An annulus $A$ embedded in $M$ is called 
a {\it peripherally compressing annulus} for $F$ 
if $A \cap F$ is a single essential simple 
closed curve on $F$ 
and $A \cap \partial M = \partial A \setminus (A \cap F)$ 
is a single essential simple closed curve on $\partial M$. 
We note that 
a peripherally compressing annulus is called an 
{\it accidental annulus} when it is considered in a knot exterior. 
See e.g. \cite{IO00}. 
Let $A$ be a peripherally compressing annulus for $F$. 
Then we have a new proper surface $F'$ by cutting $F$ 
along $F \cap A$ and pasting two copies of $A$ to it. 
We say that {\it $F'$ is obtained by 
peripherally compressing $F$ along $A$}. 

\begin{lemma}
\label{lem:perhipherally compressing a torus}
Let $M$ be a compact, orientable, 
irreducible $3$-manifold such that 
$\partial M$ is a torus. 
Let $T$ be an essential torus in $M$. 
Let $A$ be a peripherally compressing annulus for $T$. 
Then the annulus obtained by peripherally compressing 
$T$ along $A$ is essential in $M$. 
\end{lemma}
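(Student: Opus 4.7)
The plan is to assume $A^*$ is not essential and derive a contradiction from each of the three possibilities (compressibility, boundary-parallelism, or boundary-compressibility) using the essentiality of $T$. I will rely on three preliminaries. First, $M$ is boundary-irreducible: since $\partial M$ is a torus and $M$ is irreducible, an essential disk in $M$ would force $M$ to be a solid torus, in which $T$ could not be essential. Second, writing $c' = A \cap \partial M$, the two components $c_1, c_2$ of $\partial A^*$ are parallel copies of $c'$, essential on $\partial M$, and cut $\partial M$ into two subannuli: $A''$, the thin one containing $c'$, and $A''' = \partial M \setminus \Int A''$. Third, pushing $A^* \cup A''$ slightly off $\partial M$ produces a torus $T'$ isotopic to $T$ in $M$, because the peripheral compression is inverted by gluing the collar $A''$ of $c'$.

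If $A^*$ is compressible, then a compressing disk $D$ has $\partial D$ parallel to the core of $A^*$, which is essential on $T'$; so $D$ compresses $T \simeq T'$, contradicting essentiality. If $A^*$ is parallel to $A''$, the parallelism region is a solid torus bounded by $T'$, whose meridian disk compresses $T \simeq T'$. If instead $A^*$ is parallel to $A'''$, the parallelism provides an ambient isotopy of $A^*$ onto $A'''$ in $M$ which, extended by the identity, carries $T' = A^* \cup A''$ onto $A''' \cup A'' = \partial M$, making $T$ boundary-parallel, again contradicting the essentiality of $T$.

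For boundary-incompressibility, let $D$ be a boundary-compressing disk with $\partial D = \alpha \cup \beta$, $\alpha \subset A^*$ essential and $\beta \subset \partial M$ joining $c_1$ and $c_2$. I split on whether $\beta \subset A''$ or $\beta \subset A'''$. In the first subcase, $\partial D \subset T'$; since $\alpha$ meets the core of $A^*$ once and $\beta$ does not, $\partial D$ meets the core once and is therefore essential on $T'$, so $D$ compresses $T \simeq T'$, a contradiction. In the second subcase, the same count makes $\partial D$ essential on $T'' := A^* \cup A'''$, so $D$ is a compressing disk for $T''$. The plan here is to cut $M$ along $A^*$ and take the piece $R$ on the side containing $D$: since $A^*$ is incompressible and $M$ irreducible, $R$ is irreducible, and $D$ compresses the torus boundary $T''$ of $R$ inside $R$, forcing $R$ to be a solid torus. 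Because $\partial D$ meets the core of $A^*$ once, this core is a longitude of $R$, and hence $A^*$ and $A'''$ are two longitudinal annuli on $\partial R$ sharing the boundary $\partial A^*$; such annuli are ambient-isotopic in the solid torus rel $\partial$, and extending the isotopy by the identity to $M$ carries $T' = A^* \cup A''$ onto $\partial M = A''' \cup A''$, again contradicting $T$'s essentiality.

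The hard part will be this last subcase of boundary-compressibility: verifying that $D$ lies on a well-defined side of $A^*$, using $D$ to conclude that this side is a solid torus with $A^*$ longitudinal, and then constructing the rel-boundary isotopy of longitudinal annuli inside that solid torus to identify $T$ with $\partial M$.
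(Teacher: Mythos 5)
Your proposal is correct and follows essentially the same route as the paper: reconstruct $T$ as $A^*\cup A''$, turn a compressing disk for $A^*$ into one for $T$, and for a boundary-compressing disk split on which complementary subannulus of $\partial M$ contains the arc $\beta$, concluding that $T$ is either compressible or boundary-parallel (your two subcases correspond exactly to the paper's dichotomy of whether the cut-off piece contains the original annulus $A$). Your treatment is somewhat more detailed in the second subcase (solid torus, longitudinal annuli, rel-boundary parallelism) and more direct in the first, but the underlying argument is the same.
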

\begin{proof}
Let $T'$ be the annulus obtained by peripherally compressing 
$T$ along $A$. 
Assume that there exists a compressing disk $D_1$ for $T'$. 
We can isotope $D_1$ so that 
$\partial D_1 \cap \Nbd(A) = \emptyset$. 
Then $\partial D_1$ is parallel to $A \cap T$, otherwise 
$\partial A$ is not essential on the annulus $T'$. 
Since $A \cap T$ is essential on $T$, 
$D_1$ is a compressing disk for $T$. 
This is a contradiction. 

Assume that there exists a boundary-compressing disk $D_2$ 
for $T'$. 
We note that the two components $\partial T'$ are parallel on 
the boundary of $M$. 
Let $A'$ be the sub-annulus of $\partial M$ such that 
$\partial A' = \partial A$ and $\partial D_2 \cap \partial M \subset A'$. 
Since $M$ is irreducible, the component $N$ of $M$ 
cut off by $T'$ which contains $D_2$ is a solid torus and 
$D_2$ is its meridian disk. 
Then $T'$ and $A'$ are parallel through $N$ since each 
component of $\partial A$ intersects $\partial D_2$ once and transversely. 
\begin{figure}[!hbt]
\centering
\includegraphics[width=12cm,clip]{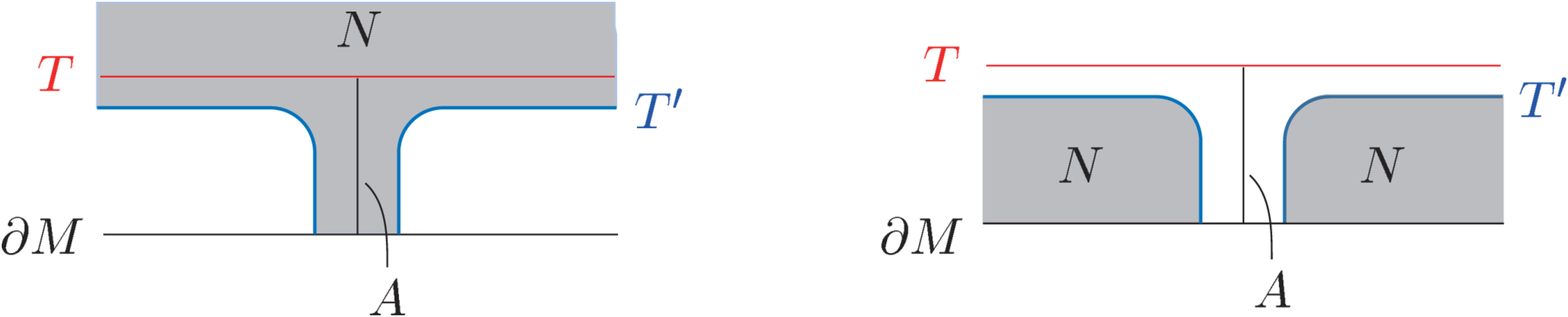}
\caption{}
\label{annulus_compression}
\end{figure}
Now, if $N$ contains $A$, $T$ is compressible in $N \subset M$ 
(See the left-hand side of Figure \ref{annulus_compression}). 
Otherwise, $T$ is parallel to $\partial M$ 
(See the right-hand side of Figure \ref{annulus_compression}).  
Therefore both cases contradicts the assumption that 
$T$ is essential in $M$. This completes the proof. 
\end{proof}

Let $(S^3, V)$ be a handlebody-knot. 
We say that $(S^3, V)$ is {\it trivial} 
if $E(V)$ is also a handlebody. 
A 2-sphere $S$ in $S^3$ is called 
an {\it $n$-decomposing sphere} for $(S^3, V)$ if 
$S \cap V$ consists of $n$ essential disks in $V$, and
$S \cap E(V)$ is an essential surface in $E(V)$. 
A handlebody-knot $(S^3, V)$ is said to be {\it $n$-decomposable} 
if it admits an $n$-decomposing sphere. 
A 1-decomposable handlebody-knot which is sometimes said to be 
{\it reducible}. 
Otherwise, it is said to be {\it irreducible}. 
We note that, by 
Lemma \ref{lem:essential surfaces in a handlebody}, 
tirivial handlebody-knots are not $n$-decomposable for 
$n>1$. 
It is proved in \cite{Tsu75, BF12} that a handlebody-knot $(S^3, V)$ 
of genus two is 1-decomposable if and only if its exterior 
$E(V)$ is $\partial$-reducible, i.e. 
$\partial E(V)$ is compressible in $E(V)$. 
See e.g. \cite{Ish08, IKO12} and the references given there 
for more details.

\section{Classification of the essential disks in genus two handlebody-knot exteriors}
\label{sec:Classification of essential disks in the exteriors of genus two handlebodies embedded in the 3-sphere}

We first review the notion of {\it characteristic compression body} 
introduced in \cite{Bon83}. 
Let $M$ be an irreducible compact 3-manifold with boundary and let 
$\mathcal{D}$ be the union 
of mutually disjoint compression disks for $\partial M$. 
Let $W$ be the union of 
$\Nbd (\mathcal{D} \cup \partial M; M)$ and all the components 
of $M \setminus \Int \thinspace (\Nbd (\mathcal{D} \cup \partial M; M))$ 
that are 3-balls.
Then we call $W$ a {\it compression body} for $\partial M$. 
Also, $\partial_+ W = \partial M \subset \partial W$ is called 
the {\it exterior boundary} of
$W$ and 
$\partial_- W = \partial W \setminus \partial_+ W$ is called 
the {\it interior boundary} of 
$W$. 
A {\it characteristic compression body} $W$ of $M$ is 
a compression body for $\partial M$ such that 
$M \setminus \Int \thinspace W$ is boundary-irreducible. 
Here, we remark that, 
if $W$ is a characteristic compression body, 
every compressing disk $D$ for $\partial M$ can be 
isotoped so that $D \subset W$. 
We also remark that any closed incompressible surface in 
$W$ is parallel to a sub-surface of $\partial_- W$ (see e.g. \cite{Bon83}). 
\begin{theorem}[\cite{Bon83}]
An irreducible compact $3$-manifold with boundary
has a unique characteristic compression body. 
\end{theorem}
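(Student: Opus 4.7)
The plan is to prove existence by constructing $W$ from a maximal system of compressing disks via Haken's finiteness theorem, and to prove uniqueness by putting the interior boundaries of two characteristic compression bodies in general position and exploiting the fact recalled just before the theorem that every closed incompressible surface inside a compression body is parallel to a subsurface of its interior boundary.

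For existence, I would apply Haken's finiteness theorem to select a maximal finite collection $\mathcal{D} = D_1 \cup \cdots \cup D_n$ of pairwise disjoint, pairwise non-parallel compressing disks for $\partial M$ in $M$, and set $W$ equal to $\Nbd(\mathcal{D} \cup \partial M; M)$ together with any $3$-ball components of its complement. This makes $W$ a compression body for $\partial M$ by construction. To verify that $M \setminus \Int \thinspace W$ is boundary-irreducible, I would argue by contradiction: an essential disk $E$ in $M \setminus \Int \thinspace W$ can be surgered along $\mathcal{D}$ via an innermost-arc reduction to yield a compressing disk for $\partial M$ in $M$ that is disjoint from $\mathcal{D}$ and non-parallel to every $D_i$, contradicting maximality of $\mathcal{D}$.

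For uniqueness, let $W_1$ and $W_2$ be two characteristic compression bodies of $M$. Each $\partial_- W_i$ is incompressible in $M$, since any hypothetical compressing disk could be isotoped to lie either in $W_i$ (contradicting incompressibility of $\partial_- W_i$ inside the compression body $W_i$) or in $M \setminus \Int \thinspace W_i$ (contradicting its boundary-irreducibility). I would put $\partial_- W_1$ and $\partial_- W_2$ in general position and use the standard innermost-disk argument, leveraging irreducibility of $M$ together with incompressibility of both surfaces, to remove all curves of intersection by an ambient isotopy, achieving $\partial_- W_1 \cap \partial_- W_2 = \emptyset$. Each component of $\partial_- W_1$ then lies either in $W_2$ or in $M \setminus \Int \thinspace W_2$; by the property recalled in the excerpt, a component contained in $W_2$ is parallel to a subsurface of $\partial_- W_2$, and symmetrically.

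The hard part will be assembling these parallelisms into a single ambient isotopy of $M$ carrying $W_1$ onto $W_2$. Concretely, I must verify that a component of $\partial_- W_1$ lying in $M \setminus \Int \thinspace W_2$ still cobounds a product region with a matching piece of $\partial_- W_2$ (ruling out non-product cobordisms via boundary-irreducibility of $M \setminus \Int \thinspace W_i$), and that the full collection of product regions fits together compatibly so that no component is matched twice. An outermost-component induction on these product regions then collapses them to produce an ambient isotopy identifying $\partial_- W_1$ with $\partial_- W_2$, which extends to an isotopy of the full compression bodies since the region on the $\partial M$-side of $\partial_- W_i$ carries its compression-body structure uniquely up to isotopy.
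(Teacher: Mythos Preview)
The paper does not prove this theorem; it simply quotes the result from Bonahon \cite{Bon83} without argument, so there is no in-paper proof to compare against.

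Your outline follows the standard strategy, but the existence half contains a concrete slip. You say that an essential disk $E$ in $M \setminus \Int\, W$ ``can be surgered along $\mathcal{D}$ via an innermost-arc reduction'' to produce a new compressing disk for $\partial M$. However, $E$ already lies in $M \setminus \Int\, W$, so $E \cap \mathcal{D} = \emptyset$ to begin with, and more importantly $\partial E$ sits on $\partial_- W$, not on $\partial M$; thus $E$ is not a compressing disk for $\partial M$ at all, and there is nothing to surger. The missing step is to first extend $E$ across the product part of the compression body $W$ (an annulus $\partial E \times [0,1]$ running from $\partial_- W$ to $\partial_+ W = \partial M$) to obtain a disk $E'$ with $\partial E' \subset \partial M$; it is this $E'$ that may meet the $D_i$ and that one then cleans up by innermost-arc surgery. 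Without this extension the contradiction with maximality of $\mathcal{D}$ is not established.

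Your uniqueness sketch is closer to Bonahon's argument and is essentially sound, though the ``hard part'' you flag is real: for a component $F$ of $\partial_- W_1$ lying in $M \setminus \Int\, W_2$, boundary-irreducibility of that piece does not by itself force $F$ to be boundary-parallel there. A cleaner route, using exactly the remark recorded just before the theorem, is to isotope each defining disk of $W_1$ into $W_2$ (since every compressing disk for $\partial M$ can be pushed into a characteristic compression body), conclude that $W_1$ can be isotoped into $W_2$, and then argue symmetrically; this avoids the delicate assembly of product regions you describe.
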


Let $(S^3,V)$ be a genus two handlebody-knot. 
Let $W$ be the characteristic compression body for of $E(V)$. 
We classify $V$ into the following four types: 
\begin{description}
\item[($i$)] 
$\partial_- W$ is a closed orientable surface of genus two; 
\item[($ii$)] 
$\partial_- W$ consists of two tori;
\item[($iii$)] 
$\partial_- W$ is a torus; 
\item[($iv$)] 
$\partial_- W = \emptyset$.
\end{description}

\begin{figure}[!hbt]
\centering
\includegraphics[width=14.5cm,clip]{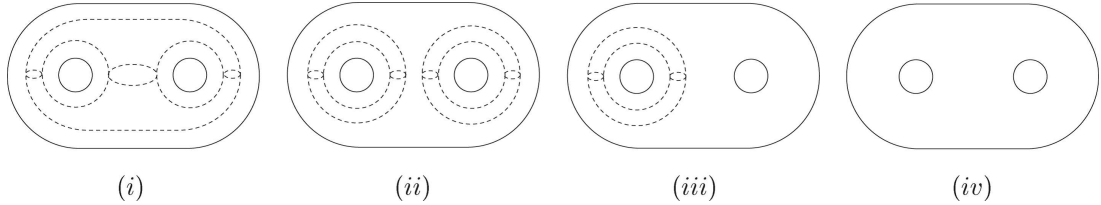}
\caption{The four types of characteristic compression bodies 
$W \subset E(V)$.}
\label{characteristic_compression_bodies}
\end{figure}
Let $(S^3, V)$ be a genus two handlebody-knot. 
As we mentioned in Section \ref{sec:Preliminaries}
$V$ is of type $(i)$ if and only if $V$ is not $1$-decomposable. 
We also note that $V$ is of type $(iv)$ if and only if $V$ is trivial. 




Let $X$ be a handlebody of genus at least $1$. 
A simple closed curve $l$ on $\partial X$ 
is said to be {\it primitive} with respect to $X$ 
if there exists an essential disk $E$ in $X$ such that 
$\partial E$ and $l$ have 
a single transverse intersection on $\partial X$. 

Let $(S^3, V)$ be a genus two handlebody-knot. 
We introduce the following three types of essential disks in 
$E(V)$. 
\begin{description}
\item[{\rm Type $1$ ($1$-decomposing sphere type)}]
An essential disk $D$ in $E(V)$ is called 
a {\it Type $1$ disk} if 
$\partial D$ bounds an essential disk $D'$ in $V$. 
Here we remark that 
$D \cup D'$ becomes a $1$-decomposing sphere for $V$; 
\item[{\rm Type $2$ (primitive disk type)}]
An essential disk $D$ in $E(V)$ is called 
a {\it Type $2$ disk} if 
$\partial D$ is primitive with respect to $V$; 
\item[{\rm Type $3$ (unknotting tunnel type)}]
An essential disk $D$ in $E(V)$ is called 
a {\it Type $3$ disk} if 
there exists a tunnel number one $2$-component link $l_1 \sqcup l_2$ 
and an unknotting tunnel $\tau$ of it such that 
\begin{itemize}
\item
$l_1$ is a trivial knot; 
\item
there exists a re-embedding 
$h: E(l_1) \to S^3$ 
such that $V = h(E(l_1 \cup l_2 \cup \tau))$ 
and $D = h (D_*)$, where $D_*$ is the co-core of 
the $1$-handle $\Nbd(\tau ; E(l_1 \sqcup l_2))$ attached to 
$\Nbd(l_1 \sqcup l_2)$.  
\end{itemize}
\end{description}

\begin{example}
Let $L = l_1 \sqcup l_2$ be the Whitehead link 
and $\tau$ be its unknotting tunnel as illustrated in 
the left-hand side of Figure \ref{example_disk_1}. 
Let $h: E(l_1) \to S^3$ be the re-embedding 
such that $h(E(l_1))$ is 
a thicken trefoil. 
Then $V = h(E(l_1 \cup l_2 \cup \tau))$ is a 
genus two handlebody-knot 
and the image $D$ of the co-core 
the $1$-handle $\Nbd(\tau ; E(l_1 \cup l_2))$ 
becomes an essential disk in $E(V)$ 
as shown in 
the right-hand side of Figure \ref{example_disk_1}.
\begin{figure}[!hbt]
\centering
\includegraphics[width=10cm,clip]{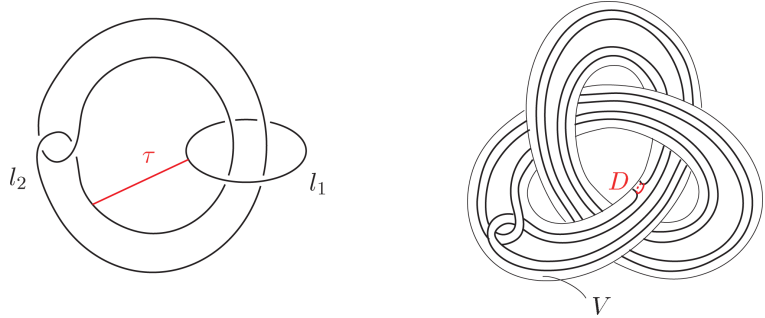}
\caption{A Type 3 essential disk.}
\label{example_disk_1}
\end{figure}
\end{example}

We remark that if $D$ is an essential disk 
in the exterior $W$ of the trivial 
genus two handlebody-knot $V$, then 
$D$ is the dual disk of an unknotting tunnel of 
the tunnel number one knot or link which is the 
core of $W \setminus \Int \thinspace \Nbd(D; W)$. 

\begin{theorem}
\label{thm:classification of essential disks}
Let $(S^3,V)$ be a non-trivial 
genus two handlebody-knot. 
Then each essential disk $D$ in the exterior of $V$ 
belongs to exactly one of the above three Types. 
\end{theorem}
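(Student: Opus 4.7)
The plan is to analyze the essential disk $D \subset E(V)$ via the $3$-manifold $V' := V \cup \Nbd(D)$, obtained from the genus two handlebody $V$ by attaching a $2$-handle along $\partial D$. Preliminarily, $V$ must be of type $(ii)$ or $(iii)$: type $(iv)$ is excluded by non-triviality, and in type $(i)$ the exterior $E(V)$ admits no essential disks. According as $\partial D$ is non-separating or separating on $\partial V$, the surface $\partial V'$ is a single torus or a pair of tori, respectively.

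The classification will then follow by case analysis on the topological type of $V'$. If $V'$ is reducible, an innermost-disk argument on a reducing sphere will show that $\partial D$ bounds an essential disk $D_V \subset V$, making $D \cup D_V$ a $1$-decomposing sphere and placing $D$ in Type $1$. If $V'$ is irreducible with torus boundary, two sub-cases arise: either $V'$ is a solid torus in $S^3$, in which case a meridian disk of $V'$ cut against $D$ produces an essential disk $E \subset V$ with $|\partial E \cap \partial D| = 1$, so $\partial D$ is primitive and $D$ is Type $2$; or $V' = E(K)$ for a non-trivial knot $K$, which I will exclude by noting that then $V = E(K \cup \tau)$, with $\tau$ the co-core arc of $\Nbd(D)$, so that $E(V) = \Nbd(K \cup \tau)$ would be a genus two handlebody, forcing $V$ to be trivial. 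Finally, if $V'$ is irreducible with two torus boundaries, writing $S^3 \setminus V' = I_1 \sqcup O_2$ yields $E(V) = I_1 \natural O_2$ as a boundary-connect-sum along $\Nbd(D)$; non-triviality of $V$ will exclude both pieces being solid tori, and irreducibility of $V'$ will exclude both being non-trivial knot exteriors, so exactly one of them, say $O_2$, is a solid torus. I plan to read off the Type $3$ data by taking $l_1$ a trivial knot and $h \colon E(l_1) \to S^3$ the embedding whose image is the solid torus $\overline{S^3 \setminus I_1}$, and by defining $l_2$ and $\tau$ as the preimages under $h$ of the core of $O_2$ and of the co-core arc of $\Nbd(D)$, respectively.

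Mutual exclusivity of the three types will be immediate, since the three cases correspond to mutually exclusive topological conditions on $V'$ (reducible; irreducible solid torus; irreducible with two torus boundaries). The main obstacle will be the claim in the two-tori case that $I_1$ and $O_2$ cannot both be non-trivial knot exteriors: otherwise $V$ would be a regular neighborhood of a knotted $\theta$-graph $\mu_1 \cup \mu_2 \cup \alpha$, with $\mu_i$ the cores of the complementary knot exteriors and $\alpha$ a connecting arc, and the co-core disk of $\Nbd(\alpha)$ in $V$ would provide an essential disk with boundary isotopic to $\partial D$, contradicting the irreducibility of $V'$. Translating the intrinsic geometric data $(I_1, O_2)$ into the abstract Type $3$ model $(l_1 \cup l_2, \tau, h)$ will then be a matter of book-keeping.
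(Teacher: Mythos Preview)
Your route differs from the paper's: instead of invoking the characteristic compression body $W$ of $E(V)$ and splitting into types $(ii)$ and $(iii)$, you analyze $V' = V \cup \Nbd(D)$ directly. The overall case division is sound and does lead to the three Types, but two of your steps need more than you indicate.

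First, when $V'$ is a solid torus, the assertion that a meridian disk of $V'$ ``cut against $D$'' yields a disk $E \subset V$ with $|\partial E \cap \partial D| = 1$ is exactly the non-trivial point. Phrased via the co-core arc $c \subset V'$ of the $2$-handle $\Nbd(D)$, what you need is: if the solid torus $V'$ drilled along $c$ is a genus two handlebody, then $c$ is boundary-parallel in $V'$. Naive cut-and-paste with a meridian disk does not give this (the arc $c$ could a priori meet every meridian disk many times); one must invoke Gordon's theorem \cite{Gor87}. This is precisely what the paper does, working inside the solid torus $V \cup W$, which coincides with your $V'$ up to a collar in the non-separating case.

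Second, your $\theta$-graph argument for excluding that both complementary pieces $I_1, O_2$ are non-trivial knot exteriors does not work as written. If $\mu_i$ denotes the knot with exterior equal to the $i$-th piece, then $\Nbd(\mu_i)$ is the \emph{other} side of the torus $T_i$, namely $V'$ together with the remaining piece, which is much larger than $V$; so $V$ is not $\Nbd(\mu_1 \cup \mu_2 \cup \alpha)$, and you have not produced any disk in $V$ with boundary isotopic to $\partial D$ (indeed, producing such a disk is equivalent to what you are trying to prove). The exclusion does hold, but for a different reason: since $I_1$ is not a solid torus, $S^3 \setminus \Int\, I_1 = V' \cup O_2$ is one; the torus $\partial O_2$ then lies inside this solid torus and hence compresses there, and being incompressible in the knot exterior $O_2$ it must compress in $V'$. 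The resulting sphere separates the two boundary tori of $V'$ and therefore cannot bound a ball, contradicting your standing hypothesis that $V'$ is irreducible.
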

\begin{proof}
Let $D$ be an essential disk in $E(V)$. 
By definition, we may easily check that 
$D$ can not belong to 
more than one type. 

Let $W$ be the characteristic compression body of $E(V)$. 
We first consider the case where $V$ is of type $(ii)$. 
Set $\partial_- W = T_1 \sqcup T_2$, where each of 
$T_1$ and $T_2$ is a torus. 
It is clear that $D$ is separating in $E(V)$. 
Since $T_1 \sqcup T_2$ is incompressible in $E(V \cup W)$ and 
compressible in $S^3$, $T_1 \sqcup T_2$ is compressible in $V \cup W$. 
Let $D' \subset V \cup W$ be a compressing disk for $T_1$ 
and $S'$ be a sphere obtained by compressing $T_1$ along $D'$. 
We note that $S'$ is an essential sphere in $V \cup W$, otherwise 
$V \cup W$ is a solid torus, which is a contradiction. 
By Haken's lemma \cite{Hak68}, 
there exists an essential sphere $S''$ in $V \cup W$ such that $S'' \cap V$ 
is a single disk. 
This implies that $S''$ is a 1-decomposing sphere for $V$. 
By Lemma 3.1 of \cite{Kod11}, $S'' \cap W$ 
is a unique compressing disk 
of $\partial E(V)$ in $E(V)$, which implies $S'' \cap W$ 
is isotopic to $D$ in $E(V)$. 
Therefore $\partial D$ bounds a disk 
(parallel to $S'' \cap V$) in $V$, 
hence $D$ is a Type 1 disk. 

In the following we shall consider the case 
where $V$ is of type $(iii)$. 
In this case $V \cup W$ is a solid torus since 
$\partial_- W$ bounds a solid torus in $S^3$ while 
$E(V \cup W)$ is not a solid torus.  

Suppose that $D$ is non-separating in $E(V)$. 
Then there exists a simple arc $\gamma$ properly embedded in $W$ such that 
\begin{itemize}
\item
$\gamma$ intersects $D$ once and transversely; and 
\item
$\gamma \cup \partial_-W$ is a spine of $W$. 
\end{itemize}
See the left-hand side of Figure \ref{fig:characteristic_compression_body_type_iii}. 
\begin{figure}[!hbt]
\centering
\includegraphics[width=10cm,clip]{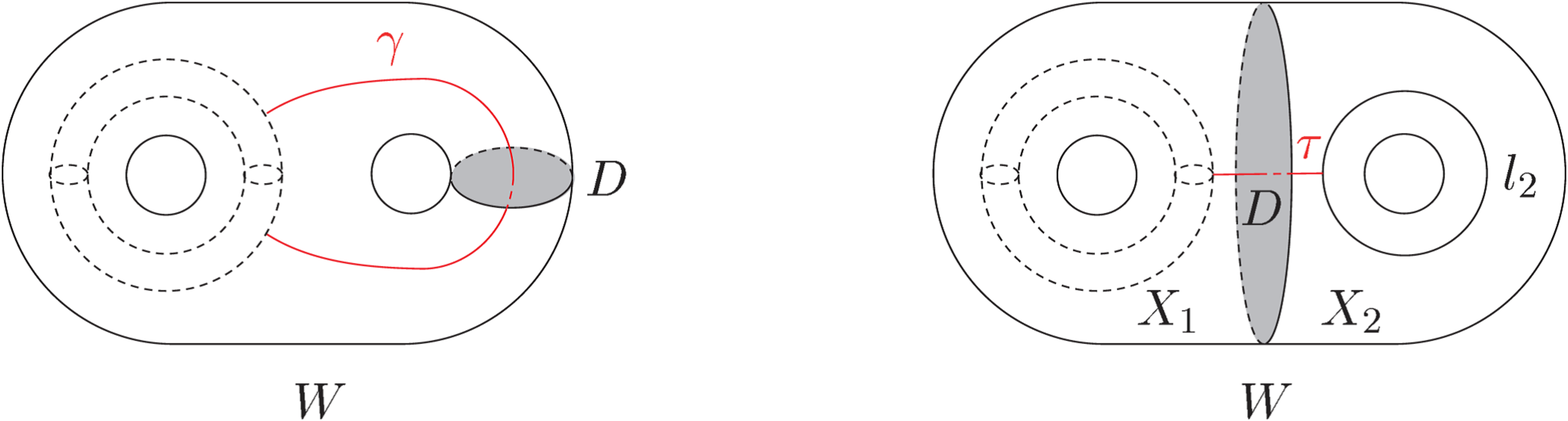}
\caption{}
\label{fig:characteristic_compression_body_type_iii}
\end{figure}
Since $V \cap W$ is a solid torus and 
$V \cup W \setminus \Int \thinspace \Nbd(\gamma) \cong V$ is a genus two handlebody, 
it follows from \cite{Gor87} that $\gamma$ is unknotted in $V \cup W$, 
that is, there exists a disk $E$ in $V \cup W$ such that 
$E \cap \gamma = \partial E \cap \gamma = \gamma$ and 
$E \cap \partial_- W = \partial E \setminus \Int \thinspace \gamma$. 
This implies that $D$ is a Type 2 disk. 

Suppose that $D$ is separating in $E(V)$. 
We set 
$W \setminus \Int \thinspace \Nbd(D; W) = X_1 \sqcup X_2$, 
where $X_1 \cong T^2 \times [0,1]$ and $X_2$ is a solid torus. 
Let $l_2$ be the core of $X_2$. 
Then there exists a simple arc $\tau$ in $W$ such that 
\begin{itemize}
\item
$\tau$ connects $\partial_- W$ and $l_2$; 
\item
$(\Int \thinspace \tau ) \cap (\partial_- W \cup l_2) = \emptyset$; 
\item
$\tau$ intersects $D$ once and transversely; and 
\item
$\Gamma = \tau \cup l_2 \cup \partial_-W$ is a spine of $W$. 
\end{itemize}
See the right-hand side of Figure \ref{fig:characteristic_compression_body_type_iii}. 
We re-embed the solid torus $V \cup W$ into $S^3$ by a map 
$\iota : V \cup W \to S^3$ so that 
$E(\iota(V \cup W))$ is a solid torus. 
Let $l_1$ be the core of $E(\iota(V \cup W))$. 
Then $l_1 \cup \iota(l_2)$ is a tunnel number one link with 
an unknotting tunnel $\iota(\tau)$, hence $D$ is a 
Type 3 disk. 
This completes the proof. 
\end{proof}

\section{Classification of the essential annuli in genus two handlebody-knot exteriors}
\label{sec:Classification of essential annuli in the exteriors of genus two handlebodies embedded in the 3-sphere}

In this section, we provide a classification of 
the essential annuli in the exteriors of genus 
two handlebody-knots.  
Essential annuli in one of the four Types in the classification 
are described using {\it Eudave-Mu\~noz knots}. 
We quickly review the definition and important properties of 
this class of knots.

In \cite{Eud97} Eudave-Mu\~noz provided 
an infinite family of hyperbolic knots $k(l, m, n, p)$, where 
either $n$ or $p$ is equal to $0$,  
that admit non-integral toroidal surgeries. 
The knots are now called {\it Eudave-Mu\~noz knots}. 
The construction of the knot $k(l, m, n, p)$ can be 
briefly explained as follows. 
Let $(B, T)$ be the two-string tangle 
shown in Figure \ref{EM_knots}. 
In the figure, $(B, T)$ lies outside of the 
small circle depicted in the middle. 
Then the double branched cover of the tangle $(B, T)$ is the 
exterior of the Eudave-Mu\~noz knot $k(l, m, n, p)$. 
\begin{figure}[!hbt]
\centering
\includegraphics[width=14.5cm,clip]{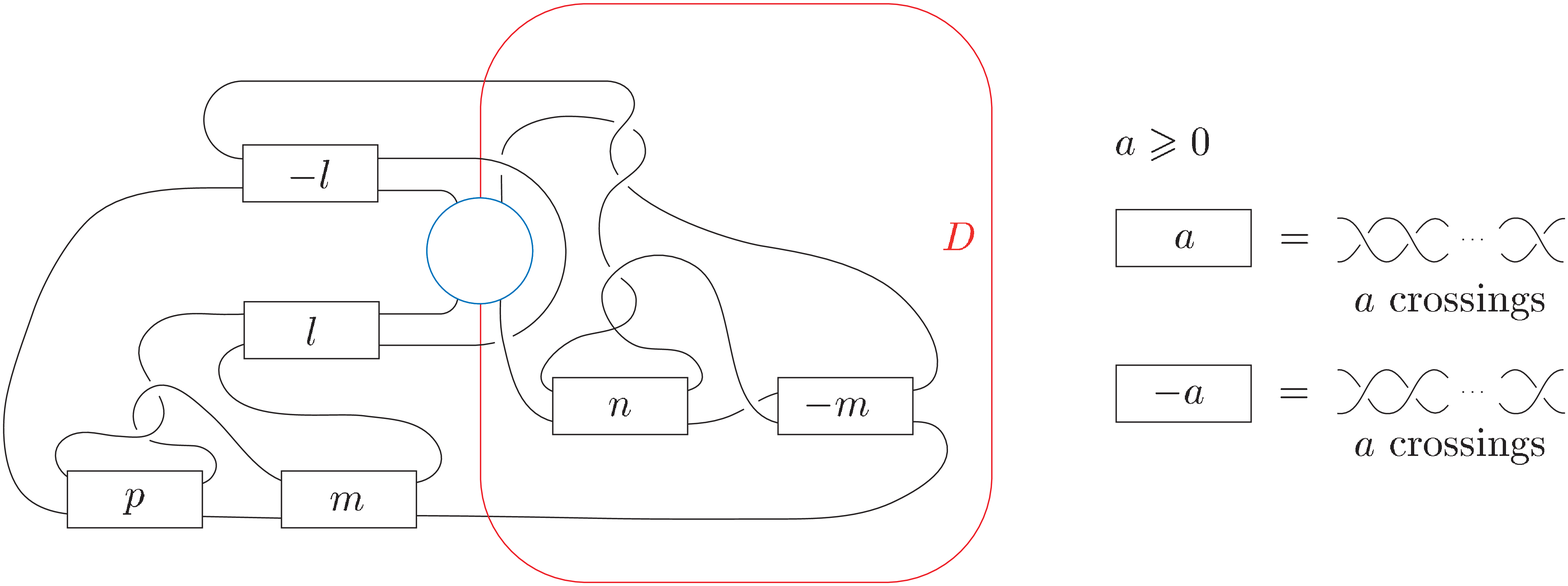}
\caption{}
\label{EM_knots}
\end{figure}
We note that the $(-2, 3, 7)$-pretzel knot, which is one of the 
most famous example of knots that admits non-integral 
toroidal Dehn surgeries, is $k(3, 1, 1, 0)$. 
In \cite{Eud02}, a non-integral toroidal slope $r$ 
for $k(l, m, n, p)$ is described in terms of the parameters as 
\[r = l(2m - 1)(1 - lm) + n(2lm - 1)^2 - 1/2\]
 for $k(l, m, n, 0)$ and 
\[r = l(2m - 1)(1 - lm) + p(2lm - l - 1)^2 - 1/2\]
 for $k(l, m, 0, p)$. 
The slope $r$ is obtained as a lift of the circle $\partial D$, 
where the disk $D$ is depicted as a red arc in Figure \ref{EM_knots}. 
Gordon and Luecke \cite{GL04} proved that these are the only 
hyperbolic knots 
which admit non-integral toroidal surgeries.

\begin{theorem}[\cite{GL04}] 
\label{thm:classification of hyperbolic knots admitting a non-integral toroidal surgery}
Let $K$ be a hyperbolic knot in $S^3$ that admits a
non-integral toroidal surgery. 
Then $K$ is one of the Eudave-Mu\~noz knots 
and the toroidal slope is $r$ described above.
\end{theorem}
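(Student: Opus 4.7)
The plan is to extract combinatorial information from a pair of intersection graphs on punctured surfaces, following the framework of Culler--Gordon--Luecke--Shalen and its later refinements. Write $r = p/q$ with $q \geq 2$, set $M = E(K)$, and let $\hat T \subset M(r)$ be an essential torus in the surgered manifold. After isotopy, $\hat T$ meets the surgery solid torus $V$ in a nonempty collection of meridian disks, so $P := \hat T \cap M$ is an essential punctured torus properly embedded in $M$ with boundary slope $r$. Let $n = |\partial P|$; hyperbolicity of $M$ together with earlier Gordon--Luecke results on cyclic and toroidal surgeries forces $n \geq 2$ and imposes parity conditions on $n$. I would also fix a meridian disk $D$ of $V$, so that the arcs of $\hat T \cap D$ together with the circles $\partial P$ produce the classical intersection graph pair $(\Gamma_P, \Gamma_D)$: vertices of $\Gamma_D$ are the components of $\hat T \cap V$, vertices of $\Gamma_P$ are the components of $\partial P$, and edges correspond to arcs of $P \cap D$.

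Next I would exploit the non-integrality of $r$. Non-integrality forces a strong parity restriction on the labels appearing around each vertex of $\Gamma_P$ and, combined with the intersection number $\Delta(r, \mu) = q \geq 2$, bounds both the valence and the face structure of the graphs. Applying Scharlemann cycle and ``great web'' technology, any Scharlemann cycle of length $k$ in $\Gamma_D$ produces a punctured M\"obius band or Klein bottle in $M$, and hyperbolicity of $K$ restricts such configurations sharply; together with the Euler characteristic bookkeeping on the once- or multiply-punctured torus $P$, this leaves only finitely many combinatorial types of $(\Gamma_P, \Gamma_D)$.

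The decisive final step is the identification. For each surviving combinatorial type, I would reconstruct $M$ explicitly by reading the graph pair as a gluing pattern and recognising $E(K)$ as the double branched cover of a two-string tangle $(B_0, T_0)$ in a $3$-ball. A direct comparison then shows that $(B_0, T_0)$ agrees, up to mirror symmetry and the allowed parameter ambiguities, with the tangle of Figure \ref{EM_knots}, so that $K = k(l, m, n, p)$ with either $n = 0$ or $p = 0$. The non-integral toroidal slope is computed as the lift of the simple closed curve $\partial D$ indicated in that figure, giving the stated formula for $r$.

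The hard part is the enumeration of combinatorial types in the second and third steps. Graph pairs on a punctured torus and a disk with $\Delta \geq 2$ admit a large menagerie of a priori possibilities, and ruling out configurations that satisfy all the local parity and Scharlemann-cycle constraints but cannot be realised by any hyperbolic knot exterior requires sustained case analysis together with global input about essential surfaces in $M$. This occupies the bulk of \cite{GL04}, and I would not expect a short argument; by contrast, once a graph pair has been pinned down, the identification with Eudave-Mu\~noz's explicit construction is comparatively formal.
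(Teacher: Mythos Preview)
The paper does not contain a proof of this theorem. It is quoted verbatim from \cite{GL04} and used as a black box (in Corollary~\ref{cor:essential twice-punctuerd tori and Eudve-Munoz knots} and in Section~\ref{sec:Classification of essential Mobius bands in the exteriors of genus two handlebodies embedded in the 3-sphere}); no argument for it appears anywhere in the text. So there is nothing in this paper to compare your proposal against, and no proof is expected of you here.

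That said, your sketch has a genuine error in the setup. You take $D$ to be a meridian disk of the filling solid torus $V$ and propose to build the graph pair from ``the arcs of $\hat T \cap D$''. But by construction $\hat T \cap V$ already consists of meridian disks of $V$, each parallel to $D$; after a small isotopy $\hat T \cap D = \emptyset$, and in any case the intersection consists of circles, not arcs. The Gordon--Luecke machinery needs a second \emph{properly embedded} surface $Q \subset E(K)$ whose boundary slope differs from $r$, so that $P \cap Q$ is a system of arcs and circles in $E(K)$ and the graphs $\Gamma_P$, $\Gamma_Q$ live on the capped-off closed surfaces $\hat P$, $\hat Q$. (Compare the setup Gordon actually uses in Appendix~A of this paper for Lemma~\ref{lem:4-punctured spheres}: there the second surface is a meridional level planar surface coming from thin position.) In \cite{GL04} the second surface is another essential punctured torus, not a disk; the analysis of the resulting pair of graphs on two tori is what fills most of that paper. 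Your later paragraphs --- Scharlemann cycles, parity constraints from $q \geq 2$, a finite list of combinatorial types, and identification via the tangle picture --- are in the right spirit, but they rest on a graph pair that your first paragraph does not actually produce.
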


\begin{lemma}
\label{lem:handlebodies and Eudave-Munoz knots}
Let $K$ be an Eudave-Mu\~noz knot 
and let $P$ be an incompressible twice-punctured torus 
properly embedded 
in $E(K)$ such that $\partial P$ consists of 
the two parallel toroidal slopes of $K$. 
Then $P$ cuts off $E(K)$ into two handlebodies of genus two. 
\end{lemma}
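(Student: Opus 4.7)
The plan is to exploit the tangle description of $E(K)$ recalled just above. Specifically, $E(K)$ is the double branched cover of the two-string tangle $(B,T)$ depicted in Figure~\ref{EM_knots}, and the red disk $D$ in that figure meets $T$ transversely in four interior points, with $\partial D$ lifting to a curve of slope $r$ on $\partial E(K)$. Applying Riemann--Hurwitz to the branched cover restricted to $D$, one computes $\chi(\widetilde D)=2\chi(D)-4=-2$; moreover, since the mod-$2$ monodromy of $\partial D$ is the sum of the four meridional monodromies and hence vanishes, $\partial D$ lifts to two disjoint simple closed curves, each of slope $r$. Thus $\widetilde D$ is a twice-punctured torus whose boundary consists of two parallel copies of the slope $r$.

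I would first reduce to the case $P=\widetilde D$. Capping off $\partial P$ by meridional disks of the Dehn filling of slope $r$ produces a closed torus in $S^3_r(K)$ which inherits essentiality from $P$. Since the toroidal Dehn surgery $S^3_r(K)$ contains a unique essential torus up to isotopy (by the analysis in \cite{Eud02}), $P$ must be isotopic to $\widetilde D$, and it suffices to prove the statement for $\widetilde D$.

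The main step is then the claim that $D$ separates $(B,T)$ into two trivial three-string sub-tangles $(B_1,T_1),(B_2,T_2)$, each isotopic to $(D^2\times I,\{p_1,p_2,p_3\}\times I)$. Because $|D\cap T|=4$ and the four endpoints of $T$ are distributed on $\partial B$ in the configuration dictated by the figure, each $B_i$ contains exactly three sub-arcs of $T$; by direct inspection of Figure~\ref{EM_knots} (or an explicit isotopy tracking each strand through the cut), these three arcs can be simultaneously isotoped to three parallel straight arcs in $B_i$. This diagrammatic verification is the main obstacle of the proof, as it depends on the specific combinatorial form of the Eudave-Mu\~noz tangle.

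Once the triviality of the sub-tangles is established, the conclusion follows from a short calculation of the double branched covers. The double branched cover of $(D^2\times I,\{p_1,p_2,p_3\}\times I)$ is $\widetilde{D^2}\times I$, where $\widetilde{D^2}$ denotes the double branched cover of $D^2$ over three interior points. By Riemann--Hurwitz $\chi(\widetilde{D^2})=-1$, and since three is odd the curve $\partial D^2$ lifts to a single circle; thus $\widetilde{D^2}$ is a once-punctured torus, and $\widetilde{D^2}\times I$ is a handlebody of genus two. Gluing the two covers along their common boundary $\widetilde D$ realizes $E(K)$ cut along $\widetilde D\simeq P$ as the union of two genus-two handlebodies, as required.
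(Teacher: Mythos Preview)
Your approach is essentially the paper's: exploit the double branched cover $p\colon E(K)\to B$ over the tangle $(B,T)$, note that the disk $D$ splits $(B,T)$ into two trivial $3$-string tangles, and conclude that the preimages are genus-two handlebodies. The paper's argument is a terse three lines---it simply writes $P=p^{-1}(D)$ and asserts that $D$ cuts $(B,T)$ into trivial $3$-string tangles---so your Riemann--Hurwitz computations and your explicit identification of the branched cover of a trivial $3$-string tangle flesh out what the paper leaves implicit.

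One caution on your added reduction step: the claim that capping off $\partial P$ in $S^3_r(K)$ yields a torus that ``inherits essentiality from $P$'' is not automatic. Incompressibility is not in general preserved under Dehn filling along the boundary slope of the surface, and the version of Przytycki's theorem recorded later in the paper (Theorem~\ref{thm: Theorem of Przytycki}) already \emph{assumes} the handlebody decomposition you are trying to establish, so it cannot be invoked here. The reduction does go through if you instead appeal to the explicit description in \cite{Eud97,Eud02} of $S^3_r(K)$ as a graph manifold whose unique incompressible torus is the image of $p^{-1}(D)$, or argue uniqueness of the incompressible twice-punctured torus in $E(K)$ directly; as written, that sentence needs a supporting citation or a short independent argument rather than a bare assertion.
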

\begin{proof}
Let $K=k(l, m, n, p)$. 
Let $(B, T)$ and $D$ be the tangle and the disk, respectively, 
as shown in Figure \ref{EM_knots}. 
Let $p: E(K) \to B$ be the double branched covering of 
$(B, T)$. 
Then we have $P = p^{-1}(D)$. 
Since the disk $D$ cuts off $(B,T)$ into two trivial 3-string tangles 
$(B_1, T_1)$ and $(B_2, T_2)$, 
$P$ cuts off $E(K)$ into two genus two handlebodies 
$p^{-1}(B_1)$ and $p^{-1}(B_2)$. 
\end{proof}

Let $(S^3, V)$ be a genus two handlebody-knot. 
We provide a list of annuli properly embedded in $E(V)$. 

\begin{description}

\item[{\rm Type 1 (2-decomposing sphere type)}]
Let $\Gamma \subset S^3$ be a spatial handcuff-graph.  
Let $S$ be a sphere in $S^3$ that intersects $\Gamma$ 
in exactly one edge of $\Gamma$ twice and transversely. 
Set $V = \Nbd(\Gamma)$. 
We call $A = S \setminus \Int \thinspace V$ 
a {\it Type $1$ annulus} for 
the handlebody-knot $(S^3, V)$ if 
$A$ is not parallel to the boundary of $V$. 
See Figure \ref{type1_annulus}. 
\begin{figure}[!hbt]
\centering
\includegraphics[width=8cm,clip]{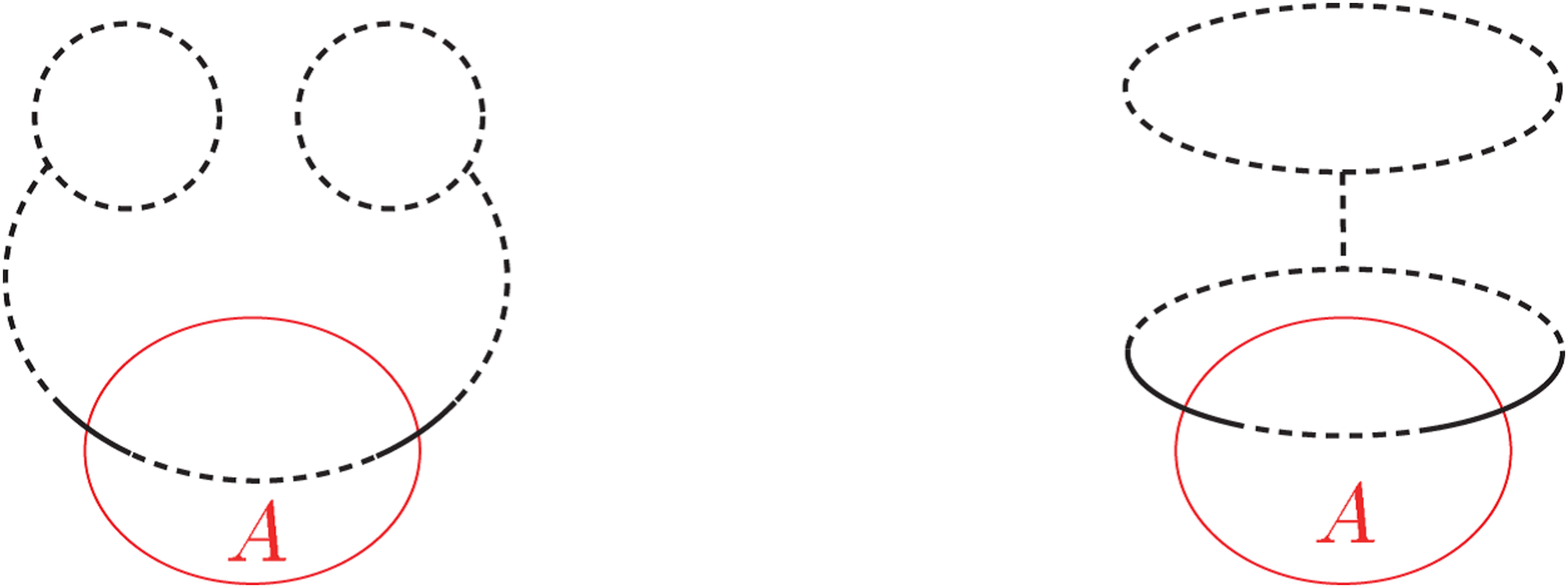}
\caption{Type 1 Annuli.}
\label{type1_annulus}
\end{figure}

\item[{\rm Type 2 (Hopf tangle type)}]
Let $\Gamma \subset S^3$ be a spatial handcuff-graph.  
Assume that one of the two loops of 
$\Gamma$ is a trivial knot bounding a disk 
$D$ such that 
$\Int \thinspace D$ intersects $\Gamma$ in an edge $e$ once and transversely.  
Set $V = \Nbd(\Gamma)$ and $A = D \cap E(V)$. 
We call $A$ a {\it Type $2$ annulus} for 
the handlebody-knot $(S^3, V)$. 
See Figure \ref{type2_annulus}. 
\begin{figure}[!hbt]
\centering
\includegraphics[width=8cm,clip]{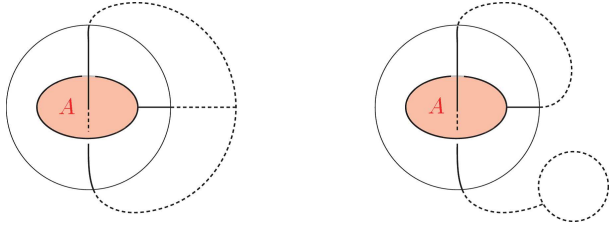}
\caption{Type 2 Annuli.}
\label{type2_annulus}
\end{figure}

\item[{\rm Type 3 (knot/link type)}]
Let $X$ be a solid torus embedded in $S^3$. 
Let $A$ be an annulus properly embedded in $E(X)$ such that 
$\partial A \cap \partial X$ consists of 
parallel non-trivial simple closed curves on 
$\partial X$. 
\begin{itemize}
\item
Let $\alpha$ be a properly embedded trivial simple arc  
in $X$ such that $\partial \alpha \cap \partial A = \emptyset$. 
Set $V = X \setminus \Int \thinspace \Nbd (\alpha)$. 
Then we call $A$ a {\it Type $3$-$1$ annulus} for the handlebody-knot $(S^3, V)$ provided that, 
if $\partial A$ bounds an essential disk in $X$, then any meridian disk of $X$ has non-empty 
intersection with $\alpha$. 
\item
Let $\partial A$ does not bound an essential disk in $X$. 
Let $\alpha$ be a properly embedded simple arc 
in $E(X)$ such that $\alpha \cap A = \emptyset$. 
Set $V = X \cup \Nbd (\alpha)$. 
Then we call $A$ a {\it Type $3$-$2$ annulus} for the handlebody-knot $(S^3, V)$ if 
$A$ is not parallel to the boundary of $V$. 
\end{itemize}

Let $X_1$, $X_2$ be two disjoint solid tori embedded in $S^3$. 
Assume that there exists an annulus $A$ properly embedded in 
$E(X_1 \sqcup X_2)$ such that 
$A \cap \partial X_i$ is a non-trivial simple closed curve in 
$\partial X_i$ for $i=1,2$. 
Let $e \subset E(X_1 \sqcup X_2) \setminus A$ be 
a proper arc connecting 
$\partial X_1$ and $\partial X_2$. 
Set $V = X_1 \cup X_2 \cup \Nbd (e)$. 
Then we call $A$ a {\it Type $3$-$3$ annulus} for the handlebody-knot $(S^3, V)$. 
A proper annulus $A$ in the exterior of a genus two handlebody-knot 
is said to be a {\it Type $3$ annulus} if it is 
a Type $3$-$1$, $3$-$2$ or $3$-$3$ annulus. 
Figure \ref{type3_annulus} shows 
schismatic pictures of Type 3 annuli. 
\begin{figure}[!hbt]
\centering
\includegraphics[width=12cm,clip]{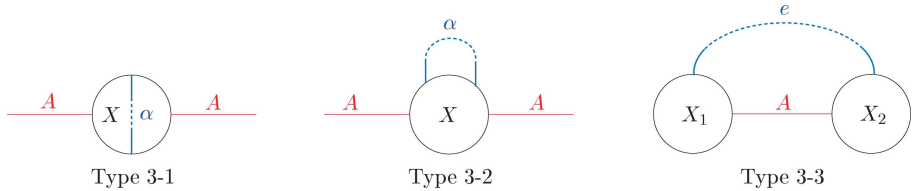}
\caption{Type 3 annuli.}
\label{type3_annulus}
\end{figure}

\item[{\rm Type 4 (Eudave-Mu\~noz type)}]
Let $K$ be an Eudave-Mu\~noz knot 
and let $P$ be an incompressible twice-punctured 
torus properly embedded 
in $E(K)$ so that $\partial P$ consists of 
the two parallel toroidal slopes of $K$. 
By Lemma \ref{lem:handlebodies and Eudave-Munoz knots}, 
$P$ cuts off $E(V)$ into two handlebodies of genus two. 
Let $V$ be one of them and set $A = \partial \Nbd(K) \setminus 
\Int \thinspace (\partial \Nbd(K) \cap \partial V)$. 
\begin{itemize}
\item
We call $A$ a {\it Type $4$-$1$ annulus} 
for the handlebody-knot $(S^3, V)$. 
\item
Let $U \subset S^3$ be a knot or a two component link 
contained in $E(V \cup N(K))$ such that 
$E(V \cup N(K) \cup U)$ 
is a compression body for $E(V \cup X)$. 
Let $i : E(U) \to S^3$ be an re-embedding 
such that $E(i(E(U)))$ is not a solid torus or two solid tori. 
Then we call $i(A)$ 
a {\it Type $4$-$2$ annulus} 
for the handlebody-knot. 
\end{itemize}
A proper annulus $A$ in the exterior of a genus two handlebody-knot 
is said to be a {\it Type $4$ annulus} if it is 
a Type $4$-$1$ or $4$-$2$ annulus. 
Figure \ref{type4_annulus} depicts schismatic pucture of 
an essential annulus of Type 4. 
\begin{figure}[!hbt]
\centering
\includegraphics[width=5cm,clip]{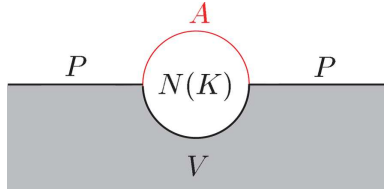}
\caption{A Type 4 annulus.}
\label{type4_annulus}
\end{figure}
\end{description}

\begin{remark}
The annuli listed above are not always essential. 
However, if $A \subset E(V)$ is an annulus of one of the above four types, at least we have 
the following by definition. 
\begin{itemize}
\item
each component of $\partial A$ is essential on $\partial V$. 
\item
$A$ is not parallel to the boundary of $V$. 
\end{itemize}
In Corollary \ref{cor:complete classification of essential annuli}, 
we will prove that if $(S^3, V)$ is irreducible, 
then the above annuli are actually essential. 
\end{remark}

\begin{example}
Figure \ref{example_annulus_1} 
shows several types of  
essential annuli in the exteriors of genus two 
handlebody-knots. 
\begin{figure}[!hbt]
\centering
\includegraphics[width=13.5cm,clip]{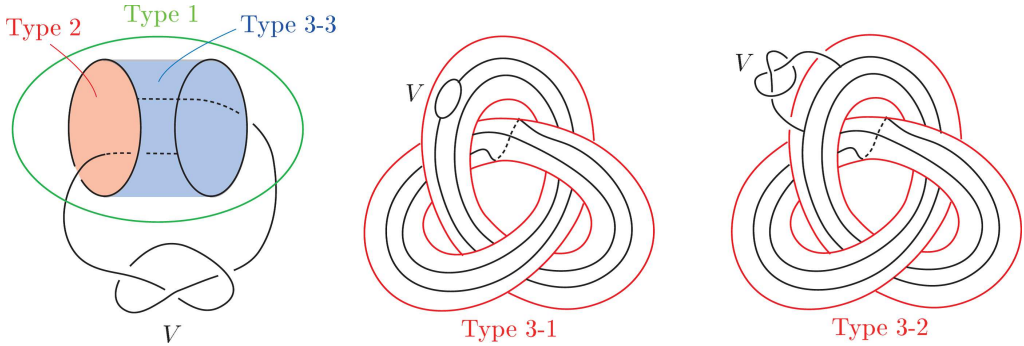}
\caption{Essential annuli.}
\label{example_annulus_1}
\end{figure}

\end{example}

Now we are ready to state the classification theorem of 
the essential annuli in the exterior of genus two handlebody-knots. 
This should be contrasted with Lemma \ref{lem:BZ85}. 

\begin{theorem}
\label{thm:classification of essential annuli}
Let $(S^3,V)$ be a genus two handlebody-knot. 
Then each essential annulus in the exterior of $V$ 
belongs to exactly one of the four Types 
listed above. 
\end{theorem}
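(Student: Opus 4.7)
The plan is to first dispose of mutual exclusivity by a definition-chase (each Type is characterized by a distinct ambient configuration in $S^3$: a $2$-decomposing sphere for Type $1$, a handcuff-graph Hopf-tangle disk for Type $2$, an essential annulus in a knot-or-link exterior to which a $0$-handle/$1$-handle has been attached for Type $3$, and the Eudave-Mu\~noz twice-punctured torus of Lemma \ref{lem:handlebodies and Eudave-Munoz knots} for Type $4$) and then handle existence by a case analysis driven by $\partial A \subset \partial V$ together with the ambient $3$-manifold $\hat V := V \cup N(A)$ and its embedding in $S^3$.

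For existence, I would first dispatch the case in which both components of $\partial A$ bound essential disks in $V$. A standard innermost-disk/outermost-arc argument makes the two disks disjoint, and capping $A$ by them yields a $2$-sphere meeting $V$ in two essential disks and $E(V)$ in the essential annulus $A$; this is by definition a $2$-decomposing sphere, so $A$ is of Type $1$. The remaining cases are organized by the structure of $S^3 \setminus \hat V$: if $\hat V$ or a slight enlargement of it has the homotopy type of a handcuff graph or a knot or link, we reconstruct $V$ as a regular neighborhood of such a graph and recover Type $2$, Type $3$-$1$, Type $3$-$2$, or Type $3$-$3$, depending on how $\partial A$ meets a disk system of $V$ and on how many solid-torus components the reconstruction requires. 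The key dichotomy here is whether $\partial A$ can be isotoped off a cocore meridian of $V$ or whether it is forced to cross it once transversely, which distinguishes Type $3$ from Type $2$.

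The substantive case is the one in which no such sub-handlebody/graph decomposition of $\hat V$ exists. In this situation I would argue that $A$ together with an appropriate sub-annulus of $\partial V$ forms the boundary torus of a regular neighborhood of a knot $K \subset S^3$, while the complementary sub-surface of $\partial V$ extends $A$ (by a pair of co-core disks from $V$) to an incompressible twice-punctured torus $P$ properly embedded in $E(K)$. The two boundary components of $P$ share a common slope on $\partial N(K)$; using Lemma \ref{lem:annulus with essential boundary is esential} and Lemma \ref{lem:perhipherally compressing a torus} together with the essentiality of $A$, I would show that this slope is non-integral and toroidal and that $K$ is hyperbolic. Theorem \ref{thm:classification of hyperbolic knots admitting a non-integral toroidal surgery} of Gordon--Luecke then forces $K$ to be an Eudave-Mu\~noz knot $k(l,m,n,p)$, and Lemma \ref{lem:handlebodies and Eudave-Munoz knots} confirms that $P$ splits $E(K)$ into two genus two handlebodies, one of which is $V$ up to a re-embedding of a complementary solid torus; this places $A$ in Type $4$-$1$ or, after the re-embedding, in Type $4$-$2$.

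The principal obstacle is this last case, namely verifying that the slope on $\partial N(K)$ realized by $\partial P$ is non-integral and that $P$ is incompressible in $E(K)$, since only then is Gordon--Luecke applicable. Ruling out integer slopes requires an intersection-number analysis of $\partial A$ with a complete meridian disk system of $V$, and ruling out the degenerate geometric cases (torus knot, cable, composite) that could survive with an integer slope requires a separate appeal to Lemma \ref{lem:BZ85} applied inside each solid-torus piece produced by compressing $\partial V$; these combined estimates are the technical heart of the argument.
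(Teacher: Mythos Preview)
Your outline has the right shape and the mutual-exclusivity characterization matches the paper's, but two load-bearing steps are missing.

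First, in the non-Type-$4$ cases you assert that $\hat V$ (or an enlargement) ``has the homotopy type of a handcuff graph or a knot or link'' and then reconstruct $V$ accordingly, but you give no mechanism for producing the compressing disk that makes this reconstruction possible. The paper organizes the argument by the configuration of $\partial A$ on $\partial V$ (four cases) and, in the cases where $\partial V\setminus N(\partial A)$ contains a pair of pants or a $4$-punctured sphere, invokes two nontrivial facts about planar surfaces in knot exteriors: there is no non-meridional essential planar surface with an odd number of boundary components (Lemma~\ref{lem:odd-punctured spheres}, via Gabai), and there is no incompressible $4$-punctured sphere with integral boundary slope (Lemma~\ref{lem:4-punctured spheres}, proved by Gordon in the Appendix with Scharlemann-cycle techniques). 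These lemmas are precisely what force the complementary planar piece to compress and hence supply the disk you need; your ``intersection-number analysis of $\partial A$ with a complete meridian disk system of $V$'' does not produce this, because a priori there is no reason a meridian system of $V$ can be made disjoint from $A$.

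Second, in the Eudave--Mu\~noz case you correctly note that Gordon--Luecke requires $K$ to be hyperbolic, but your plan to rule out the non-hyperbolic possibilities via Lemma~\ref{lem:BZ85} inside solid-torus pieces only touches the torus-knot, cable, and composite sub-cases. The genuine obstacle is the general satellite case: if $K$ lies in a solid torus $Y_1$ with $\partial Y_1$ essential in $E(K)$, one must analyze how the twice-punctured torus $P$ meets $\partial Y_1$, show the intersection consists of exactly two parallel non-separating curves, identify $P\cap Y_2$ as a cabling annulus for the companion, exclude further essential tori, and finally appeal to the Hayashi--Shimokawa theorem (Theorem~\ref{thm:HS98}) on surgeries in a solid torus producing separating essential annuli with primitive boundary. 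This is the technical core of Lemma~\ref{lem:satellite knots admitting 2-pounc. tori with non-integral boundary slope} and is not reachable from Lemma~\ref{lem:BZ85} alone.
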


Let $(S^3, V)$ be a genus two handlebody-knot. 
Let $A$ be an essential annulus $A$ in the exterior $E(V)$. 
Set $\partial A = a_1 \sqcup a_2$. 
We classify the configurations of the boundary of 
$A$ on $\partial V$ into the following four cases: 
\begin{description}
\item[{\rm Case} $1$] 
$a_1$ and $a_2$ are non-parallel, non-separating simple closed curves 
on $\partial V$.
\item[{\rm Case} $2$] 
$a_1$ is non-separating and $a_2$ is separating on $\partial V$.
\item[{\rm Case} $3$] 
$a_1$ and $a_2$ are parallel separating simple closed curves on $\partial V$.
\item[{\rm Case} $4$] 
$a_1$ and $a_2$ are parallel non-separating simple closed curves on $\partial V$.
\end{description}
\begin{figure}[!hbt]
\centering
\includegraphics[width=14.5cm,clip]{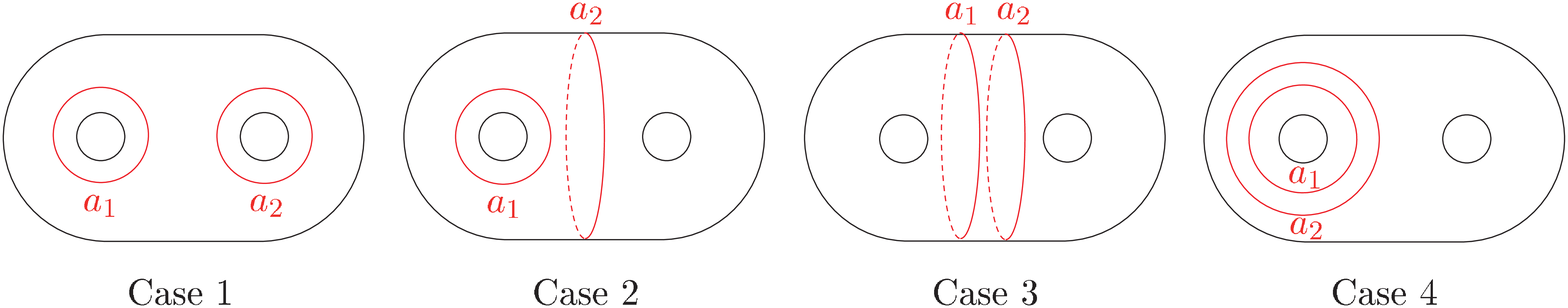}
\caption{}
\label{types_of_boundaries_of_annuli}
\end{figure}

We note that, by Lemma \ref{lem:essential surfaces in a handlebody}, 
the trivial handlebody-knot 
does not contain essential annuli in its exterior. 

\begin{lemma}
\label{lem:characterization of Type 1 and 2}
Let $(S^3, V)$ be a genus two handlebody-knot. 
Let $A \subset E(V)$ be an essential annulus. 
\begin{enumerate}
\item
If both $a_1$ and $a_2$ bound disks in $V$, then 
$A$ is a Type $1$ annulus. 
\item
If exactly one of $a_1$ and $a_2$ bounds a disk in $V$, then 
$A$ is a Type $2$ annulus. 
\end{enumerate}
\end{lemma}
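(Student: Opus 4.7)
The strategy is, in Part~(1), to use the sphere $S := A \cup D_1 \cup D_2$ (after making $D_1$ and $D_2$ disjoint by an innermost-disk argument), and, in Part~(2), to use the disk $D := A \cup D_1$; in each case the goal is to exhibit $V$ as the regular neighborhood of a handcuff graph meeting $S$ (respectively $D$) in the appropriate way.

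For Part~(1), first choose disjoint essential disks $D_1, D_2 \subset V$ with $\partial D_i = a_i$: the standard innermost-circle argument on $D_1 \cap D_2$ works because every simple closed curve on a disk bounds a subdisk. Then $S = A \cup D_1 \cup D_2$ is an embedded 2-sphere in $S^3$ bounding 3-balls $B^+, B^-$. The main claim is that $D_1$ and $D_2$ must be parallel in $V$. To prove this one rules out alternatives: (a) if $D_1 \cup D_2$ does not separate $V$, then one of the sets $V \cap \bar B^\pm$ is empty, so $V$ lies in a single 3-ball bounded by $S$ and a core curve of $A$ bounds a compressing disk for $A$ in the complementary ball, contradicting the incompressibility of $A$; (b) if $D_1$ and $D_2$ have different separation behavior in $V$, then both subsurfaces $\partial V \cap \bar B^\pm$ are annuli cobounded by $a_1$ and $a_2$, forcing $a_1$ and $a_2$ to be parallel on $\partial V$ and hence to have the same separation behavior, a contradiction; and (c) two disjoint separating essential disks in a genus two handlebody are automatically parallel. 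Once $D_1$ and $D_2$ are parallel, they cobound a 3-ball $R$ in $V$, and $V \setminus \Int R$ is either a single solid torus (non-separating case) or a disjoint union of two solid tori (separating case). In the non-separating case, construct a handcuff spine $\Gamma = L_1 \cup L_2 \cup e$ with $L_1$ a loop formed by the core arc of $R$ closed up by an arc in $V \setminus \Int R$, $L_2$ the core of $V \setminus \Int R$, and $e$ a bridge arc; then $D_1, D_2$ are parallel meridian disks of $L_1$. In the separating case, take $L_1, L_2$ to be the cores of the two solid tori and $e$ the core arc of $R$; then $D_1, D_2$ are meridian disks of the bridge $e$. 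In either case, $S$ meets $\Gamma$ in exactly one edge, twice, so $A$ is a Type $1$ annulus.

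For Part~(2), the disk $D := A \cup D_1$ satisfies $\partial D = a_2$, $D \cap V = D_1$, and $D$ meets $\partial V$ transversely in $a_1$. Push $a_2$ slightly into $\Int V$ to obtain a simple closed curve $L_1 \subset \Int V$ isotopic to $a_2$ in $S^3$; the accompanying push of $D$ gives a disk $D^* \subset S^3$ with $\partial D^* = L_1$ whose interior meets $V$ in a disk isotopic to $D_1$. Since $a_2$ bounds a disk in $S^3$, the curve $L_1$ is an unknot in $S^3$. The aim is to realize $V$ as $N(\Gamma)$ for a handcuff spine $\Gamma = L_1 \cup L_2 \cup e$ in which $L_1$ is this loop and $D_1$ is a meridian disk of the edge $e$. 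To construct $\Gamma$, analyze $V$ cut along $D_1$: if $D_1$ separates $V$ into two solid tori $T_1 \cup T_2$ (with $L_1 \subset T_1$ after the push), take $L_2$ to be the core of $T_2$ and $e$ an arc dual to $D_1$; if $D_1$ is non-separating, the natural spine of $V$ obtained by joining the core of $V \setminus \Int N(D_1)$ with the dual arc of $D_1$ is a theta graph, which is converted into a handcuff spine by an edge slide chosen so that $L_1$ remains a loop. In the resulting $\Gamma$, the disk $D^*$ has interior meeting $\Gamma$ exactly at the point $D_1 \cap e$, and $A = D^* \cap E(V)$ up to isotopy, so $A$ is a Type $2$ annulus.

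The principal obstacle in Part~(1) is the case analysis showing that the configurations of $D_1 \cup D_2$ in $V$ force parallelism; the crucial step uses the incompressibility of $A$ to rule out the non-separating configuration and a parallelism argument on $\partial V$ to rule out mixed separation types. The principal obstacle in Part~(2) is the construction of the handcuff spine when $D_1$ is non-separating, as one must perform an edge slide to convert the natural theta-graph spine into a handcuff spine while keeping $L_1$ as a loop and $D_1$ as the meridian of the bridge edge.
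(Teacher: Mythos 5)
Your overall strategy coincides with the paper's: part (1) is essentially immediate once the two disks are seen to be parallel (the paper dismisses it as ``straightforward from the definition''), and for part (2) the paper likewise forms the disk $A\cup D_1$, observes that the core of the relevant solid torus is unknotted with $a_2$ as a preferred longitude, and builds a handcuff spine. However, two specific steps in your write-up are wrong as stated. First, in part (1), case (b): if $a_1$ is separating and $a_2$ is non-separating on $\partial V$, then $\partial V$ cut along $a_1\cup a_2$ consists of a once-punctured torus and a pair of pants, not two annuli, so your claim that ``both subsurfaces $\partial V\cap \overline{B}^{\pm}$ are annuli cobounded by $a_1$ and $a_2$'' is false and cannot be used to force $a_1$ and $a_2$ to be parallel. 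The mixed case is nonetheless impossible, but for a different reason: the component of $V$ cut along $D_1\cup D_2$ that carries both copies of the non-separating disk $D_2$ is connected and has interior disjoint from $S$, yet it is adjacent to the two sides of $D_2\subset S$, which lie in different complementary balls of $S$; so it can be placed in neither ball. (The same ``two sides'' argument also disposes of your case (a) without invoking incompressibility of $A$.)

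Second, and more substantively, in part (2) your stated aim --- a handcuff spine $\Gamma=L_1\cup L_2\cup e$ in which $D_1$ is a meridian disk of the cut edge $e$ --- is unachievable when $D_1$ is non-separating: the co-core of the cut edge of a handcuff spine always separates $\Nbd(\Gamma)$. Consequently your concluding claim in that case, that $\Int \thinspace D^*$ meets $\Gamma$ exactly at the point $D_1\cap e$, cannot hold, and no edge slide will make it hold. The correct configuration (and the one in the paper's proof) puts the puncture on the \emph{other loop}: take $L_2$ to be a loop running once through the $1$-handle $\Nbd(D_1)$, take $L_1$ to be a core of the solid torus $X=V\setminus \Int\thinspace \Nbd(D_1)$ --- which is isotopic in $X$ to $a_2$, since $a_2$ bounds the disk $A\cup D_1$ in the complement of $\Int\thinspace X$ and is therefore a preferred longitude --- and let $e$ be a short arc in $X$ joining $L_1$ to $L_2$; then $\Int\thinspace D^*\cap\Gamma$ is a single transverse point of $L_2$. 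Since the definition of a Type $2$ annulus allows the punctured edge to be a loop as well as the cut edge, this repairs your argument, but as written the non-separating case does not go through.
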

\begin{proof}
(1) is straightforward from the definition. 
Let exactly one of $a_1$ and $a_2$, say $a_1$, 
bound a disk $E$ in $V$. 

If $E$ is non-separating in $V$, 
then we may assume that $a_2$ is an essential simple 
closed curve on the boundary of the solid torus 
$X = V \setminus \Int \thinspace \Nbd (E)$. 
Then the disk $A \cup E$ determines a Seifert surface of 
the core $K$ of $X$. 
It follows that $K$ is the trivial knot. 
Now, there is a handcuff-spine of $V$ consisting of 
two loops $e_1$, $e_2$ and one cut edge $e$ such that 
$e_1$ intersects $D$ once and transversely, $e_2 = K$ and 
$e \cap E = \emptyset$. 
This implies that $A$ is a Type 2 annulus. 

If $E$ is separating in $V$, 
then $V \setminus \Int \thinspace \Nbd (E)$ consists of two solid tori 
$X_1$ and $X_2$, and 
$a_2$ is an essential simple 
closed curve on the boundary of one of them, say $X_1$. 
Then, again, the disk $A \cup E$ determines a Seifert surface of 
the core $K$ of $X_1$. 
It follows that $K_1$ is the trivial knot. 
Fix meridian disks $E_1$ and $E_2$ of $X_1$ and $X_2$, respectively. 
There is a handcuff-spine of $V$ consisting of 
two loops $e_1$, $e_2$ and one cut edge $e$ such that 
$e_1 = K$, $e_2$ is the core of $X_2$, 
$e \cap (E_1 \cup E_2) = \emptyset$ and 
$e$ intersects $E$ once and transversely. 
This implies that $A$ is also a Type 2 annulus. 
\end{proof}

Let $P$ be a non-meridional, essential, planar surface 
properly embedded in the exterior of 
a knot $K$ in $S^3$. 
If $P$ is a disk, it is clear the $K$ is the trivial knot and 
$P$ is its Seifert surface. 
If $P$ is an annulus, then by Lemma \ref{lem:BZ85}, 
$K$ is a torus knot or a satellite knot and $P$ its cabling annulus. 
The next two lemmas, which plays an important role throughout 
this section, show that $P$ can be 
neither an $n$-punctured sphere for $n \geqslant 3$ odd 
nor a $4$-punctured sphere.

\begin{lemma}
\label{lem:odd-punctured spheres}
Let $P$ be a non-meridional 
planar surface with 
odd number of boundary components properly embedded 
in the exterior $E(K)$ of a knot $K$. 
Then $P$ is essential if and only if 
$K$ is the trivial knot and $P$ 
is a meridian disk 
of the solid torus $E(K)$. 
\end{lemma}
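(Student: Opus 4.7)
The reverse direction is immediate: if $K$ is the unknot then $E(K)$ is a solid torus, and its meridian disk is planar, has a single (odd) boundary component whose slope is the longitude of $K$ (so non-meridional), and is essential by Lemma \ref{lem:essential surfaces in a handlebody}.

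For the forward direction, assume $P$ is essential, planar, non-meridional, with an odd number $n$ of boundary components. The first step is to pin down the boundary slope on $T=\partial E(K)$. Since $P$ is essential, the components $\ell_1,\dots,\ell_n$ of $\partial P$ are mutually parallel essential curves on $T$ of some common slope $a\mu+b\lambda$ with $b\neq 0$ (the non-meridional hypothesis). Orienting $P$ induces signs $\epsilon_j\in\{\pm 1\}$ on each $\ell_j$, and since $[\partial P]$ must vanish in $H_1(E(K))=\mathbb{Z}\langle\mu\rangle$ we obtain $a\sum_j\epsilon_j=0$. Because $\sum_j\epsilon_j$ has the parity of $n$, it is odd and nonzero, so $a=0$: the boundary slope is the longitude.

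The second step is to pass to the zero-surgery $3$-manifold $K(0)=E(K)\cup V'$, whose glued solid torus $V'$ has meridian $\lambda$. Capping $P$ off by $n$ disjoint meridian disks $D_1,\dots,D_n$ of $V'$ produces a closed genus-zero surface $\hat P\subset K(0)$, i.e.\ a $2$-sphere. Let $K^*$ be the core of $V'$; the algebraic intersection $[\hat P]\cdot[K^*]$ is a signed count of the $n$ transverse intersections $D_j\cap K^*$, so it is congruent to $n\equiv 1\pmod 2$. Consequently $[\hat P]\neq 0$ in $H_2(K(0);\mathbb{Z}/2)$, so $\hat P$ bounds no $3$-submanifold of $K(0)$, and in particular bounds no $3$-ball; thus $K(0)$ is reducible.

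The decisive ingredient is then Gabai's Property R theorem: the only knot in $S^3$ with reducible zero-surgery is the unknot. Applying it forces $K$ to be trivial, so $E(K)$ is a solid torus; by Lemma \ref{lem:essential surfaces in a handlebody}, $P$ must be a disk and $n=1$. Its boundary is the longitude of the unknot, which is the meridian of $E(K)$, so $P$ is a meridian disk as claimed. The main non-routine step is the appeal to Property R; everything else is a straightforward homological or handlebody-theoretic computation.
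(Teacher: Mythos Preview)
Your proof is correct and follows essentially the same strategy as the paper's: both arguments use the oddness of $n$ to force the boundary slope to be $0$ and then invoke Gabai's theorem (Property R / Corollary 8.3 of \cite{Gab87}) to conclude that $K$ is trivial. The only organizational difference is that you pin down the slope first via the vanishing of $[\partial P]$ in $H_1(E(K))$ and then cap off in $K(0)$, whereas the paper caps off in $S^3(K;p/q)$ first, observes the resulting sphere is non-separating, and reads off $p=0$ from $H_1(S^3(K;p/q))\cong \mathbb{Z}/p\mathbb{Z}\cong \mathbb{Z}\oplus H_1(M)$; the content is the same.
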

\begin{proof}
The sufficiency is clear. For necessity, let 
$F \subset E(K)$ be a non-meridional 
planar surface with odd number of boundary components. 
Then by capping off the boundary
components of $P$ by meridian disks of the filling solid torus, 
we obtain 
a non-separating sphere $\hat{P}$ in 
the $3$-manifold 
$S^3 (K; p/q)$ obtained from 
$S^3$ by performing the Dehn surgery along $K$ 
with the surgery slope $p/q$, 
where $p/q \neq 1/0$ 
is the boundary slope of $P$. 
Hence $S^3 (K; p/q)$ can be presented as $(S^2 \times S^1) \# M$. 
It follows that $H_1(S^3 (K; p/q)) \cong 
\Integer / p \Integer \cong \Integer \oplus H_1(M)$. 
This implies  that $p=0$ and $H_1(M) = 0$. 
By Corollary 8.3 of \cite{Gab87}, the 3-manifold $S^3 (K; 0)$ 
is prime and the genus of the knot is zero. 
Therefore $K$ is the trivial knot and $P$ is the meridian disk of 
$E(K)$. 
\end{proof}

\begin{remark}
It is proved in \cite{GL87} that if there exists a non-trivial knot that contains 
an essential planar surface $P$ of non-meridional boundary in its exterior, 
then the boundary-slope of $P$ is integral. 
\end{remark}

\begin{lemma}
\label{lem:4-punctured spheres}
The exterior of a knot in $S^3$ contains 
no properly embedded incompressible $4$-punctured 
sphere with integral boundary slope.  
\end{lemma}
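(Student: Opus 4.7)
The plan is to argue by contradiction. Let $P \subset E(K)$ be an incompressible properly embedded $4$-punctured sphere with integer boundary slope $n$, set $T = \partial E(K)$, and let $A_1, A_2, A_3, A_4$ denote the four cyclic annular components of $T \setminus \partial P$. Set $M = S^3(K;n)$, let $V \subset M$ be the attached solid torus, and let $K' \subset V$ be its core. Capping off $\partial P$ with four parallel meridian disks $D_1, \ldots, D_4$ of $V$ yields a $2$-sphere $\hat P \subset M$ meeting $K'$ transversely in four points. If $\hat P$ is non-separating in $M$, the argument in the proof of Lemma \ref{lem:odd-punctured spheres} carries over: $M$ has $S^2 \times S^1$ as a connected summand, so $H_1(M) \cong \mathbb{Z}/|n|$ contains an infinite cyclic subgroup; hence $n=0$, and Gabai's theorem forces $K$ to be the unknot. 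But the solid torus $E(\mathrm{unknot})$ contains no incompressible $4$-punctured sphere, a contradiction.

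Hence $\hat P$ separates $M$. The four meridian disks cut $V$ into four cyclically arranged $3$-balls $V_1, \ldots, V_4$ (with $V_i$ lying between $D_i$ and $D_{i+1}$), and consecutive pieces lie on opposite sides of $\hat P$; after relabelling, $V_1 \cup V_3$ lies in one component $M_1$ of $M \setminus \hat P$ and $V_2 \cup V_4$ in the other. The main case is that $\hat P$ bounds a $3$-ball $B = M_1$ in $M$. Each $V_i$ meets $\partial B = \hat P$ in two disjoint disks, so the successive removal of $V_1$ and $V_3$ from the $3$-ball $B$ amounts to drilling two unknotted arcs with endpoints on $\partial B$, and yields
\[
E(K) \cap B \;=\; B \setminus \Int(V_1 \cup V_3),
\]
a genus-two handlebody. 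Inside this handlebody, $P$ is incompressible by hypothesis; it is boundary-incompressible because any boundary-compressing disk in $E(K)\cap B$ has its outer arc on $A_1 \cup A_3 \subset \partial E(K)$ and would already boundary-compress $P$ in $E(K)$; and it is not boundary-parallel because the complementary subsurface $A_1 \cup A_3$ of $\partial(E(K) \cap B)$ has Euler characteristic $0$, whereas $\chi(P) = -2$. Thus $P$ would be an essential surface with four boundary components inside a handlebody, contradicting Lemma \ref{lem:essential surfaces in a handlebody}.

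The hard part will be the remaining case, in which $\hat P$ bounds no $3$-ball on either side and is therefore a reducing sphere for $M$, so that $M$ is reducible. The plan here is to invoke known results on reducible integral Dehn surgeries (due to Gordon--Luecke and Scharlemann): any such reducing sphere can be replaced by one meeting $K'$ in only two points, which produces an essential annulus in $E(K)$ of integer slope $n$; by Lemma \ref{lem:BZ85} this forces $K$ to be a torus or cable knot with $n$ the cabling slope. A direct inspection of the Seifert-fibered structure of the cable space of $E(K)$ --- whose only essential surfaces are parallel copies of the cabling annulus --- then rules out the existence of an incompressible $4$-punctured sphere of slope $n$, closing the argument.
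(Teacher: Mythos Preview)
Your proposal has two substantial gaps.

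First, in the case where $\hat P$ bounds a $3$-ball $B$, you assert that $B \setminus \Int(V_1 \cup V_3)$ is a genus-two handlebody because the arcs are ``unknotted''. Nothing in the setup guarantees this: the two arcs of $K'$ in $B$ form a $2$-string tangle which may well be non-trivial. In fact, if the tangle \emph{were} trivial, the $4$-punctured sphere $P$ would already be compressible from that side (the disk separating the two trivial strands has essential boundary on $P$), contradicting the hypothesis directly and making the appeal to Lemma~\ref{lem:essential surfaces in a handlebody} beside the point. When the tangle is non-trivial the complement is not a handlebody and Lemma~\ref{lem:essential surfaces in a handlebody} does not apply; this sub-case is simply not handled.

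Second, and more seriously, in the reducing-sphere case you invoke ``known results on reducible integral Dehn surgeries (due to Gordon--Luecke and Scharlemann)'' to replace the reducing sphere by one meeting $K'$ in exactly two points, thereby producing an essential annulus in $E(K)$. No such theorem is available. Gordon--Luecke \cite{GL87} shows only that a reducing slope must be integral (which you already assume), and Scharlemann \cite{Sch90} establishes the Cabling Conjecture for satellite knots only. The statement you need---that reducible integral surgery on $K$ forces $K$ to be cabled---is precisely the open Cabling Conjecture. As the paper itself remarks, the lemma you are proving is a partial result \emph{toward} that conjecture; your argument in this case is circular.

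The paper's proof (by Gordon, in Appendix~A) avoids both issues via thin position and intersection-graph machinery: a Scharlemann cycle in the graph on a thin level sphere is used to build a punctured lens space inside $E(K)(\alpha)$, and its frontier is shown to be an essential annulus in $E(K)$ with the same boundary slope as $P$. This produces the cabling annulus, and hence the contradiction, without assuming the Cabling Conjecture.
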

The proof of Lemma \ref{lem:4-punctured spheres}, 
is given in the Appendix A by Cameron Gordon.

We remark that Lemmas \ref{lem:odd-punctured spheres} and 
\ref{lem:4-punctured spheres} are strongly related to 
the famous Cabling Conjecture, which was proposed 
Gonz\'alez-Acu\~na and Short. 
\begin{conjecture}
[The Cabling Conjecture \cite{GA86}] 
A Dehn surgery on a knot $K$ in $S^3$ can give a reducible manifold only when 
$K$ is a cable knot and the surgery slope is that of the cabling annulus. 
\end{conjecture}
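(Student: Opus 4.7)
The plan is to cap $P$ off to a $2$-sphere via the Dehn filling and then use a disk-surgery argument exploiting that $P$ has an even number of boundary components. Let $P \subset E(K)$ be an incompressible $4$-punctured sphere with integer boundary slope $n$. Denote by $V^*$ the solid torus glued to $\partial E(K)$ to form the Dehn filling $S^3(K; n)$, and by $K^*$ its core. Since the four components of $\partial P$ are parallel curves of integer slope, they bound four parallel meridian disks of $V^*$; capping $P$ along these disks yields a $2$-sphere $\hat P \subset S^3(K; n)$ meeting $K^*$ transversely in $4$ points, precisely at the centers of the capping disks.

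The central case is when $\hat P$ bounds a $3$-ball $B$ in $S^3(K; n)$. Then $B \cap V^*$ consists of two sub-balls, each a regular neighborhood of one of the two arcs $\alpha_1, \alpha_2$ of $K^* \cap B$. Choose a simple closed curve $c$ on $\hat P$ separating the two pairs of endpoints of $\alpha_1, \alpha_2$ and disjoint from the four small capping disks; then $c \subset P = \hat P \cap E(K)$ and $c$ separates the four punctures of $P$ into two pairs of two, which makes $c$ essential on the $4$-punctured sphere $P$. Since $B$ is a $3$-ball and $c \subset \partial B$ separates $\alpha_1$ from $\alpha_2$, there is a properly embedded disk $D \subset B$ with $\partial D = c$, disjoint from $V^*$; in particular $D \subset B \cap E(K) \subset E(K)$. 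Thus $D$ is a compressing disk for $P$ in $E(K)$, contradicting the incompressibility of $P$.

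The remaining case is when $\hat P$ is a reducing sphere, so that $S^3(K; n)$ is reducible at the integer slope $n$. In this situation one invokes the strong restrictions on reducible integer Dehn surgeries on knots in $S^3$ (work of Gordon--Luecke together with progress on the Cabling Conjecture), which in particular produce a reducing sphere $S_0$ of $S^3(K; n)$ meeting $K^*$ in at most two points. A minimal-intersection comparison between $\hat P$ and $S_0$ then either returns one to the ball case after an isotopy of $\hat P$, or directly furnishes a separating disk of the preceding form within a $3$-ball component appearing in the comparison, again producing a compressing disk for $P$. The principal obstacle is precisely this reducible case: the ball case admits the clean parity argument above, while disposing of the reducible integer-surgery case requires the Dehn-surgery machinery developed in Gordon's appendix.
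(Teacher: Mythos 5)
You have been asked about the Cabling Conjecture, which the paper states explicitly as an \emph{open problem} (known only for special classes of knots); the paper contains no proof of it, and your argument does not supply one. What you have actually written is an attempted proof of Lemma~\ref{lem:4-punctured spheres} (no incompressible $4$-punctured sphere with integral boundary slope), which is proved in Appendix~A. Judged as such, the central ``ball'' case has a fatal gap: after choosing the curve $c$ on $\hat P$ separating the endpoints of $\alpha_1$ from those of $\alpha_2$, you assert that $c$ bounds a properly embedded disk $D\subset B$ \emph{disjoint from} $V^*$. Such a disk exists precisely when the two-string tangle $(B,\alpha_1\cup\alpha_2)$ is split, and there is no reason for that: the complement $B\cap E(K)$ is just some piece of the knot exterior, and the two arcs can be individually knotted and mutually linked in $B$, in which case every disk bounded by $c$ in $B$ meets them. (If this step were valid, the identical argument applied at the meridional slope would show that no knot admits an essential Conway sphere, and those certainly exist.) The reducible case fares no better: you defer it to unnamed ``Dehn-surgery machinery,'' but a reducing sphere meeting the core $K^*$ in two points gives back an essential annulus in $E(K)$ of slope $n$, so by Lemma~\ref{lem:BZ85} that case \emph{is} the Cabling Conjecture scenario; it cannot be invoked as known.

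Gordon's proof in Appendix~A avoids any such separation claim and is genuinely combinatorial: put $K$ in thin position to obtain a level meridional sphere $Q$ such that the fat-vertex intersection graphs $\Gamma_P$ and $\Gamma_Q$ have no monogon faces (Gabai); since $H_1(S^3)=0$, $\Gamma_P$ does not represent all types (Parry), so $\Gamma_Q$ contains a Scharlemann cycle; a counting argument places the two remaining vertices of $\Gamma_P$ in a single segment, allowing one to build a punctured lens space $\widehat X$ from the Scharlemann cycle face and a $1$-handle of $V_\alpha$; the frontier annulus $A=\partial\widehat X\cap E(K)$ is then shown to be essential, forcing $K$ to be a cable knot whose cabling annulus has the same boundary slope as $P$, which is impossible. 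None of this structure is recoverable from your outline, so the proposal does not establish the lemma, let alone the conjecture.
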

The conjecture is known to hold for 
several classes of knots including 
satellite knots \cite{Sch90}, 
strongly invertible knots \cite{Eud92}, 
alternating knots \cite{MT93},  
symmetric knots \cite{LZ94, HS98} 
and the knots admitting bridge spheres with 
Hempel distance at least three \cite{Hof95, Hof98, BCJTT12}.  
However, the general case 
is still one of the most important open problems in 
the knot theory.
We note that if the exterior of every knot in $S^3$ contains 
no properly embedded essential planar surface of 
negative Euler characteristic with integral boundary slope, 
then the Cabling Conjecture is true.

\begin{lemma}[Classification of Case 1]
\label{lem:Classification of Case 1}
Let $A \subset E(V)$ be an essential annulus of Case $1$. 
Then $A$ is 
a Type $2$, $3$-$1$ or $3$-$3$ annulus.  
\end{lemma}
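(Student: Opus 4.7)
The plan is to reduce via Lemma \ref{lem:characterization of Type 1 and 2} and then analyze the remaining main case structurally. First, if both $a_1, a_2$ bound disks in $V$, Lemma \ref{lem:characterization of Type 1 and 2}(1) would force $A$ to be a Type 1 annulus, but the two boundary components of a Type 1 annulus are parallel on $\partial V$ (they are boundaries of two parallel meridian disks of a single 1-handle of a handcuff-graph spine of $V$), contradicting Case 1. Hence at most one $a_i$ bounds a disk. If exactly one does, Lemma \ref{lem:characterization of Type 1 and 2}(2) yields that $A$ is a Type 2 annulus; the Case 1 requirement that $a_2$ (the meridian of the distinguished edge $e$) be non-separating on $\partial V$ forces $e$ to be a loop rather than the cut edge of the handcuff graph, which matches the Type 2 description.

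In the main case, where neither $a_1$ nor $a_2$ bounds a disk in $V$, I first observe that $a_1, a_2$ do not cobound an annulus $A''$ in $V$: by Lemma \ref{lem:essential surfaces in a handlebody}, any such $A''$ is inessential in $V$, and hence either compressible (giving disks bounded by $a_1$ or $a_2$, contradicting the assumption) or boundary-parallel (making $a_1, a_2$ parallel on $\partial V$, contradicting Case 1). My next step, which I expect to be the main obstacle, is to produce an essential disk $D \subset V$ with $\partial D \cap (a_1 \cup a_2) = \emptyset$. I plan to do this by analyzing $\bar V := V \cup \Nbd(A)$: Euler-characteristic computations give $\chi(\bar V) = -1$ and $\partial \bar V$ a closed surface of Euler characteristic $-2$; combined with the non-cobounding observation and a Van Kampen analysis of how $\pi_1(\Nbd(A)) = \mathbb{Z}$ is amalgamated into $\pi_1(V) = F_2$ via the two attaching annuli, one argues that $\bar V$ is itself a genus-two handlebody. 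A compressing disk of $\partial \bar V$ in $\bar V$ that is isotoped off $\Nbd(A)$ then lies in $V$ and is disjoint from $a_1 \cup a_2$, giving the desired $D$.

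Granted this disk, I split into subcases. If $D$ is non-separating in $V$, I attach a 3-ball $B \subset E(V)$ along an annular neighborhood on $\partial V$ to fill in the 1-handle dual to $D$, choosing $B$ disjoint from $\Nbd(A)$ (possible since $\partial D \cap \partial A = \emptyset$); the result $X := V \cup B$ is a solid torus, and taking $\alpha$ to be the core arc of $B$ gives $V = X \setminus \Int \Nbd(\alpha)$ with $\alpha$ a trivial arc in $X$. The Type 3-1 meridian-disk condition on $\alpha$ follows from the essentiality of $A$, since a violation would produce a boundary-compressing disk for $A$ in $E(V)$. If $D$ is separating, then $V \setminus \Int \Nbd(D) = X_1 \sqcup X_2$, two solid tori. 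I rule out both $a_1, a_2$ lying on the same $\partial X_i$: they would be disjoint non-trivial curves on a torus hence parallel there, and a case analysis of the attaching disk of $\Nbd(D)$ on the torus shows the parallelism persists on $\partial V$, contradicting Case 1. Thus $a_i \subset \partial X_i$ (up to relabeling), and taking $e$ to be the core arc of the 1-handle $\Nbd(D)$ realizes $V = X_1 \cup X_2 \cup \Nbd(e)$ and exhibits $A$ as a Type 3-3 annulus.

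The principal difficulty is in showing that $\bar V$ is a handlebody, hence the existence of the disk $D$; this requires careful bookkeeping of the handlebody combinatorics under the attachment of the solid torus $\Nbd(A)$ along two annuli, using crucially both the non-cobounding observation and the essentiality of $A$.
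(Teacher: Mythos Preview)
Your approach has a genuine gap at exactly the point you flag as the principal difficulty, and a second error in how you extract Type~3-1.

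First, the claim that $\bar V = V \cup \Nbd(A)$ is a genus-two handlebody is not established by the sketch you give. The Van Kampen computation produces an HNN extension $\langle x,y,s \mid s\,a_2\,s^{-1} = a_1\rangle$ of $F_2$, and there is no general reason this is free; even if it were, you would still need irreducibility of $\bar V$ to conclude it is a handlebody. The paper does \emph{not} argue this way: it applies Lemma~\ref{lem:4-punctured spheres} (the appendix result of Gordon that a knot exterior contains no incompressible $4$-punctured sphere with integral boundary slope) to the planar surface $P = \partial V \setminus \Nbd(a_1 \cup a_2)$, viewed as a surface in the exterior of the core of $\Nbd(A)$. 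This produces a compressing disk $D$ for $P$ in $E(A)$, and crucially $D$ may lie in $V$ \emph{or} in $E(V)$.

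Second, your treatment of the non-separating subcase is incorrect. Having found a non-separating disk $D \subset V$, you ``attach a $3$-ball $B \subset E(V)$ along an annular neighborhood on $\partial V$'' to obtain a solid torus $X = V \cup B$. There is no reason such a ball exists in $E(V)$: the curve $\partial D$ need not bound anything on the $E(V)$ side. In fact the paper shows that if $D \subset V$ is non-separating and neither $a_i$ bounds a disk in $V$, one reaches a contradiction (the annulus $A$ would be forced to meet $V$). The Type~3-1 conclusion in the paper arises instead from the case $D \subset E(V)$ non-separating: then $X = V \cup \Nbd(D)$ is a solid torus, and the dual arc $\alpha$ of $D$ is trivial in $X$ by \cite{Gor87}. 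Your argument never accesses a disk on the $E(V)$ side, so it cannot reach Type~3-1 correctly; this is precisely what Lemma~\ref{lem:4-punctured spheres} supplies and what your handlebody-of-$\bar V$ strategy misses.
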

\begin{proof}
By Lemma \ref{lem:4-punctured spheres}, 
the 4-punctured sphere 
$P = \partial V \setminus \Int \thinspace \Nbd (a_1 \cup a_2)$ 
is compressible in $E(A)$. 
Let $D$ be a compressing disk for $P$. 

Assume first that $D$ lies in $V$. 
Let $D$ be separating in $V$. 
Then $V \setminus \Int \thinspace \Nbd(D)$ consists of two disjoint solid tori 
$X_1$ and $X_2$ such that 
$a_i \subset \partial X_i$ for $i=1,2$. 
If either $a_1$ or $a_2$, say $a_1$, is trivial on $\partial X_1$, 
$a_1$ is parallel to $\partial D$ on $\partial V$. 
This contradicts the assumption that $a_1$ is non-separating. 
Thus both $a_1$ and $a_2$ are 
non-trivial on $\partial X_1$ and $\partial X_2$, 
respectively. 
Then $A$ is a Type 3-3 annulus. 
Let $D$ be non-separating in $V$. 
If either $a_1$ or $a_2$ bounds a disk in $V$, 
it follows from 
Lemma \ref{lem:characterization of Type 1 and 2} 
that $A$ is a Type 2 annulus since 
$a_1$ and $a_2$ are not parallel on $\partial V$. 
Otherwise, $a_1$ and $a_2$ are parallel essential simple 
closed curves on the boundary of $X=V \setminus \Int \thinspace \Nbd (D; V)$. 
Since $a_1 \cup a_2$ separates $\partial E(X)$, 
$A$ is separating in $E(X)$. 
On the other hand, since $a_1$ and $a_2$ are not parallel on $\partial V$, 
each of the two annulus components of 
$\partial X \setminus \Int \thinspace \Nbd (\partial A; \partial X)$ 
meets $\partial \Nbd (D)$. 
It follows that $V \cap \Int \thinspace A \neq \emptyset$, 
whence a contradiction. 
See the left-hand side of Figure \ref{fig:proof_of_classification_of_case_1}. 
\begin{figure}[!hbt]
\centering
\includegraphics[width=10cm,clip]{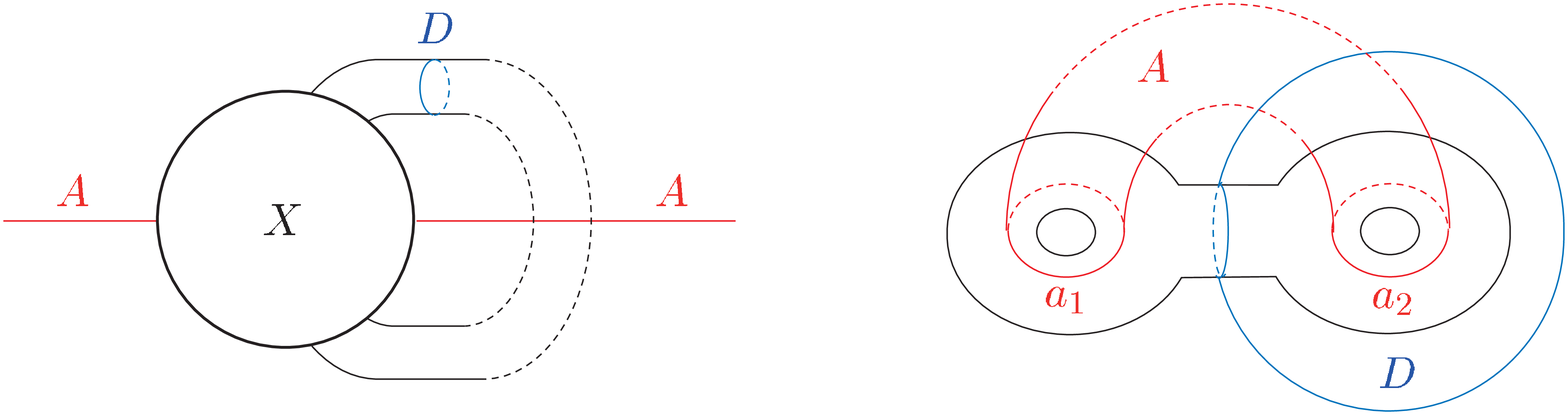}
\caption{}
\label{fig:proof_of_classification_of_case_1}
\end{figure}

Next, assume that $D$ lies in $E(V)$. 
Let $D$ be separating in $V$. 
Since $a_1$ and $a_2$ are non-parallel and 
non-separating on $\partial V$, 
each of the two components of 
$\partial V$ cut off by $\partial D$ contains $a_1$ or $a_2$. 
It follows that $D \cap A \neq \emptyset$, 
whence a contradiction. 
See the right-hand side of Figure \ref{fig:proof_of_classification_of_case_1}. 
Let $D$ be non-separating in $V$. 
Set $X= V \cup \Nbd(D)$. 
Since $\partial X$ is a torus in $S^3$, either $X$ or $E(X)$ 
is a solid torus. 
If $E(X)$ is a solid torus, then $V$ 
is the trivial genus two handlebody-knot. 
This contradicts 
Lemma \ref{lem:essential surfaces in a handlebody}. 
Hence $X$ is a solid torus. 
Since $A$ is essential in $E(V)$, 
neither $a_1$ nor $a_2$ is parallel to $\partial D$ on $\partial V$. 
It follows $a_1$ and $a_2$ are parallel essential simple 
closed curves on $\partial X$. 
Let $\alpha \subset X$ be the dual arc of $D$, that is,  
$\alpha$ is a simple arc properly embedded in $X$ such that 
$\Nbd(D) = \Nbd(\alpha)$. 
By \cite{Gor87}, $\alpha$ must be a trivial arc in 
$X$. 
This implies that $A$ is a Type 3-1 annulus. 
This completes the proof. 
\end{proof}

\begin{lemma}
\label{lem:intersection of an essential disk and an annulus}
Let $A$ be an essential annulus in $E(V)$. 
Suppose that $E(V)$ is boundary-reducible. 
Then there exists an essential disk $D$ in $E(V)$ 
such that $D \cap A = \emptyset$. 
\end{lemma}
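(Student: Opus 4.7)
The plan is to start from an arbitrary essential disk in $E(V)$, which exists by boundary-reducibility, and to modify it via a standard innermost-disk / outermost-arc argument along $A$ until it is disjoint from $A$. Let $D$ be an essential disk in $E(V)$, transverse to $A$, with $|D \cap A|$ minimal among all essential disks. We aim to show $D \cap A = \emptyset$ by deriving a contradiction from any non-empty intersection. We will use that $E(V)$ is irreducible: any $2$-sphere in $E(V) \subset S^3$ bounds a ball in $S^3$, and the ball on the side not containing $V$ lies in $E(V)$.

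First we eliminate closed-curve components of $D \cap A$. If one exists, take an innermost such curve $c$ on $D$, bounding a sub-disk $D' \subset D$ with $D' \cap A = c$. Incompressibility of $A$ forces $c$ to bound a disk $A' \subset A$ as well. Then $D' \cup A'$ is a $2$-sphere in $E(V)$, which by irreducibility bounds a ball; isotoping $D$ across this ball removes $c$ and contradicts the minimality of $|D \cap A|$.

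Now suppose $D \cap A$ consists only of arcs. If every arc of $D \cap A$ is essential on $A$, take an arc $\alpha$ outermost on $D$; the corresponding outermost sub-disk $D' \subset D$ with $D' \cap A = \alpha$ is a boundary-compressing disk for $A$, contradicting boundary-incompressibility of $A$. Otherwise, some arc of $D \cap A$ is inessential on $A$; among such arcs, pick $\alpha^*$ innermost on $A$, cutting off a sub-disk $A_0 \subset A$ with $A_0 \cap D = \alpha^*$ and $\partial A_0 = \alpha^* \cup \beta_0$, $\beta_0 \subset \partial A$. Cutting $D$ along $\alpha^*$ into sub-disks $D_1, D_2$ and gluing a parallel copy of $A_0$ to each, we obtain two disks $\tilde D_1, \tilde D_2$ in $E(V)$ with boundaries on $\partial V$ and
\[
|\tilde D_1 \cap A| + |\tilde D_2 \cap A| = |D \cap A| - 1.
\]

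The main obstacle is to verify that at least one of $\tilde D_1, \tilde D_2$ is essential, so that its strictly smaller intersection with $A$ violates the minimality of $|D \cap A|$. If both were boundary-parallel, each would bound a disk $E_i$ on $\partial V$ whose boundary contains a push-off of $\beta_0$. A case analysis on how $E_1$ and $E_2$ lie relative to $\beta_0$ on $\partial V$ — whether on opposite sides (so $E_1 \cup E_2$ is a disk on $\partial V$ with boundary $\partial D$) or nested on the same side (where a complementary-region argument yields the same conclusion) — leads in either case to $\partial D$ bounding a disk on $\partial V$, contradicting essentiality of $D$. This rules out both $\tilde D_i$ being inessential and completes the proof.
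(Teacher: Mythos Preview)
Your proof is correct and follows essentially the same approach as the paper's: both minimize $|D\cap A|$ over all essential disks and derive a contradiction via innermost-circle / outermost-arc arguments exploiting that $A$ is incompressible and boundary-incompressible. The paper simply invokes ``a standard cut-and-paste argument'' to reduce to the case where every component of $A\cap D$ is essential on $A$, whereas you carry out that reduction explicitly via the surgery on $D$ along $A_0$ and the pair-of-pants case analysis showing one of $\tilde D_1,\tilde D_2$ remains essential.
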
 
\begin{proof}
Let $D$ be an essential disk in $E(V)$. 
We minimize $\#(A \cap D)$ up to isotopy of $D$. 
If $A \cap D = \emptyset$, then we are done. 
Assume that $A \cap D \neq \emptyset$. 
Then a standard cut-and-paste argument allows us to retake 
an essential disk $D$ in $E(V)$ such that 
$A \cap D$ consists of essential circles or 
essential arcs. 
However, the existence of an essential circle in $A \cap D$ implies that 
$A$ is compressible, while the existence of an essential arc in $A \cap D$ 
implies that $A$ is boundary-compressible. 
This is a contradiction. 
\end{proof}

\begin{lemma}
\label{lem:there do not exists (2) nor (3)}
If $E(V)$ contains an essential annulus of Case $2$ or $3$, 
then $E(V)$ is boundary-irreducible. 
\end{lemma}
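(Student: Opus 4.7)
The plan is to assume for contradiction that $E(V)$ is boundary-reducible, invoke Lemma \ref{lem:intersection of an essential disk and an annulus} to obtain an essential disk $D$ in $E(V)$ with $D \cap A = \emptyset$, and then use $D$ to construct a compressing disk for $A$, contradicting the essentiality of $A$.

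First I would analyze the position of $\partial D$ on $\partial V$. Since $D \cap A = \emptyset$, the essential simple closed curve $\partial D$ lies in $\partial V \setminus (a_1 \cup a_2)$. In Case $2$ this complement consists of a pair of pants $P$ together with a once-holed torus $F_2$ with $\partial F_2 = a_2$; in Case $3$ it consists of an annulus $A'$ cobounded by $a_1$ and $a_2$ together with two once-holed tori $F_1, F_2$ with $\partial F_i = a_i$. Every essential simple closed curve in a pair of pants or annulus is boundary-parallel, and every essential simple closed curve in a once-holed torus is either boundary-parallel or non-separating in that once-holed torus (and the latter is non-separating on $\partial V$ as well, since $H_1(F_j) \hookrightarrow H_1(\partial V)$ is injective). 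Hence only two possibilities remain: $(\mathrm{I})$ $\partial D$ is parallel on $\partial V$ to $a_i$ for some $i \in \{1, 2\}$, or $(\mathrm{II})$ $\partial D$ lies in a once-holed torus piece $F_j$ and is non-separating there.

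In case $(\mathrm{I})$, $\partial D$ and $a_i$ cobound a sub-annulus $B$ of $\partial V$. Pushing $D$ across $B$ inside a collar of $\partial V$ in $E(V)$ produces a disk $D' \subset E(V)$ with $\partial D' = a_i$ and $\Int D' \cap A = \emptyset$; a further small isotopy pushing $\partial D'$ slightly into $\Int A$ parallel to $a_i$ then gives a compressing disk for $A$ in $E(V)$, contradicting the incompressibility of $A$. In case $(\mathrm{II})$, set $V' = V \cup N(D; E(V))$, so that $\partial V'$ is the surface obtained from $\partial V$ by surgery along $\partial D$, which is a torus since $\partial D$ is non-separating on $\partial V$. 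The surgery replaces $F_j$ by a disk $D^*$ with $\partial D^* = \partial F_j$: in Case $2$ necessarily $F_j = F_2$, yielding $\partial V' = F_1 \cup D^*$ with $a_2$ bounding $D^*$ on $\partial V'$; in Case $3$ (taking $F_j = F_1$ by symmetry), $\partial V' = D^* \cup A' \cup F_2$ with $a_1$ bounding $D^*$. In both situations the interior of $D^*$ is disjoint from the other component of $\partial A$, so pushing $D^*$ slightly into $E(V')$ along the collar of $\partial V'$ and then pushing its boundary into $\Int A$ yields a compressing disk for $A$ in $E(V') \subset E(V)$, once again contradicting $A$ essential.

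The main obstacle is case $(\mathrm{II})$: one must carefully track how the surgery along $\partial D$ changes $\partial V$ to the torus $\partial V'$ and verify that the disk $D^*$ on $\partial V'$ really does have its interior disjoint from the other component of $\partial A$. Once this surface-topological bookkeeping is in place, the compression of $A$ is obtained by a standard collar-push argument.
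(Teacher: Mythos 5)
Your proposal is correct and follows essentially the same route as the paper: obtain an essential disk $D$ disjoint from $A$ via Lemma \ref{lem:intersection of an essential disk and an annulus}, then produce from it a disk bounded by (a curve parallel to) $a_1$ or $a_2$ and disjoint from $A$, contradicting incompressibility of $A$. Your handling of the case where $\partial D$ is non-separating in a once-holed torus piece (surgering $\partial V$ along $\partial D$ to turn that piece into a disk $D^*$) is just a cosmetic variant of the paper's band-sum of $D$ with a dual curve $l$ followed by the observation that disjoint separating essential curves on a genus two surface are parallel.
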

\begin{proof}
Let $E(V)$ be boundary-reducible and 
assume that there exists an essential annulus 
$A \subset E(V)$ be an essential annulus of Case $2$ or $3$. 
In what follows, 
we will prove that there exist an essential disk in $E(V)$ whose 
boundary is parallel to either $a_1$ or $a_2$ on $\partial V$. 
This implies that $A$ is compressible, whence a contradiction. 

By Lemma \ref{lem:intersection of an essential disk and an annulus}, 
there exists an essential disk $D$ in $E(V)$ disjoint from $A$. 

Assume that $D$ is separating in $E(V)$.   
since any mutually disjoint, separating, essential simple closed curves 
on a genus two closed surface are mutually parallel, 
$\partial D$ is parallel to $a_2$. 

Assume that $D$ is non-separating in $E(V)$. 
Suppose that $A$ is of Case $2$. 
Let $P_1$ and $P_2$ be the pair of pants component and 
the once-punctured component of $\partial V$ cut off by $\partial A$. 
If $\partial D$ is contained in $P_1$, $\partial D$ is parallel to 
$a_1$ on $\partial V$. 
If $\partial D$ is contained in $P_2$, then there exists 
a simple closed curve $l$ on $P_2$ that intersects 
$\partial D$ once and transversely. 
Then the closure $D'$ of 
$\partial \Nbd (D \cup l ; E(V)) \setminus \partial M$ 
is an essential separating disk in $E(V)$ disjoint from $A$. 
Then, by the above argument, $\partial D'$ is parallel to $a_2$ on $\partial V$. 
Suppose that $A$ is of Case $3$. 
Since $D$ is non-separating, 
$\partial D$ is contained in a once-punctured component of 
$\partial V$ cut off by $\partial A$. 
Then we obtain an essential disk 
$D'$ in $E(V)$ so that $\partial D$ is parallel to $a_2$ on $\partial V$ 
as above. 
\end{proof}

\begin{lemma}[Classification of Case 2]
\label{lem:Classification of Case 2}
Let $A \subset E(V)$ be an essential annulus of Case $2$. 
Then $A$ is a Type $2$ annulus. 
\end{lemma}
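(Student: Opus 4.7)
By Lemma \ref{lem:characterization of Type 1 and 2}(2), it suffices to exhibit exactly one of $a_1, a_2$ as the boundary of a disk in $V$. Write $\partial V \setminus \Int \Nbd(\partial A; \partial V) = P_1 \sqcup P_2$, where $P_1$ is a pair of pants with boundary consisting of two parallel copies of $a_1$ and one copy of $a_2$, and $P_2$ is a once-punctured torus with boundary $a_2$.

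First we rule out the possibility that both $a_1$ and $a_2$ bound disks in $V$. If they did, Lemma \ref{lem:characterization of Type 1 and 2}(1) would give that $A$ is a Type $1$ annulus, so the two components of $\partial A$ would be parallel meridians of a single edge of a handcuff-spine of $V$. They would then both be non-separating on $\partial V$ (if that edge is a loop) or both be separating (if it is the cut edge), contradicting the hypothesis of Case $2$.

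To exhibit at least one disk bounded by $a_1$ or $a_2$ in $V$, the plan is to produce a compressing disk $D$ for $\partial V$ in $V$ with $\partial D \cap \partial A = \emptyset$. Such a compressing disk exists since $V$ is a handlebody, and we choose one so as to minimize $|\partial D \cap \partial A|$. By Lemma \ref{lem:there do not exists (2) nor (3)}, the exterior $E(V)$ is boundary-irreducible, which will be used crucially. A standard outermost-arc analysis on $P_1$ and $P_2$, combined with the incompressibility and boundary-incompressibility of $A$ in $E(V)$, shows that the minimum is zero: an outermost arc of $\partial D$ in $P_i$ that is trivial there permits an isotopy of $D$ across the bigon, strictly reducing the intersection; whereas an essential outermost arc, together with the subdisk of $D$ it cuts off, can be used via cut-and-paste with a neighborhood of $A$ to construct either a compressing or a boundary-compressing disk for $A$ in $E(V)$, contradicting the essentiality of $A$. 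This reduction is the main obstacle of the proof.

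Once $\partial D \cap \partial A = \emptyset$, we have $\partial D \subset P_1$ or $\partial D \subset P_2$. If $\partial D \subset P_1$, then since the essential simple closed curves on a pair of pants are precisely those parallel to one of its boundary components, $\partial D$ is parallel to $a_1$ or to $a_2$ on $\partial V$, and the corresponding curve bounds a disk in $V$. If $\partial D \subset P_2$ and is parallel to $\partial P_2 = a_2$, then $a_2$ bounds a disk in $V$; otherwise $\partial D$ is a non-separating simple closed curve on the once-punctured torus $P_2$, and compressing $P_2$ in $V$ along $D$ produces a properly embedded disk in $V$ whose boundary is $\partial P_2 = a_2$, so $a_2$ bounds a disk in $V$. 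Combined with the impossibility of both curves bounding, we conclude that exactly one of $a_1, a_2$ bounds a disk in $V$, and Lemma \ref{lem:characterization of Type 1 and 2}(2) then yields that $A$ is a Type $2$ annulus.
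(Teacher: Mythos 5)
The opening and closing parts of your argument are sound: ruling out that both $a_1$ and $a_2$ bound disks (via Lemma \ref{lem:characterization of Type 1 and 2}(1) and the fact that a Type $1$ annulus has parallel boundary components) and the endgame once an essential disk $D\subset V$ with $\partial D\cap\partial A=\emptyset$ is in hand are both correct and close in spirit to the paper. The gap is in the step you yourself flag as the main obstacle. You propose to remove the points of $\partial D\cap\partial A$ by an outermost-arc/cut-and-paste argument that produces ``a compressing or boundary-compressing disk for $A$ in $E(V)$.'' But $D$ lies in $V$ while $A$ lies in $E(V)$; they meet only in the finite set $\partial D\cap\partial A\subset\partial V$. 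There are no arcs or circles of intersection between $D$ and $A$, an arc of $\partial D\cap P_i$ does not cut off a subdisk of $D$ (it is an arc of the boundary circle of $D$, not a properly embedded arc in $D$), and a compressing or boundary-compressing disk for $A$ must live in $E(V)$ and cannot be assembled from pieces of a disk sitting on the other side of $\partial V$. So the claimed reduction to $\#(\partial D\cap\partial A)=0$ does not go through.

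The intermediate statement you are trying to reach --- that $V$ contains an essential disk disjoint from $a_1\cup a_2$ --- is essentially as strong as the lemma itself and is false for a general essential curve system on the boundary of a handlebody (disk-busting systems exist); nothing in a purely local cut-and-paste argument rules this out, and indeed the statement genuinely uses that the ambient manifold is $S^3$. What the paper does instead is regard the pair of pants $P_1$ as a properly embedded planar surface with three boundary components of integral slope in the exterior of the core circle of $A$ (a knot in $S^3$), and invoke Lemma \ref{lem:odd-punctured spheres} --- which rests on Gabai's theorem --- to conclude that $P_1$ must compress; boundary-irreducibility of $E(V)$ (your Lemma \ref{lem:there do not exists (2) nor (3)} step, used correctly there) then forces the compressing disk into $V$, after which the argument concludes as in your final paragraph. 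Without this global input, or some substitute for it, the proof is incomplete.
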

\begin{proof}
By Lemma \ref{lem:there do not exists (2) nor (3)}, 
We may assume that 
$E(V)$ is boundary-irreducible. 
Let $P$ be the component of 
$\partial V \setminus \Int \thinspace 
\Nbd (a_1 \cup a_2)$ that is homeomorphic to 
a pair of pants. 
Lemma \ref{lem:odd-punctured spheres} 
implies that $P$ is compressible in $E(A)$. 
Since $\partial V$ is incompressible in $E(V)$, is 
$P$ is compressible in $V \cap E(A)$. 
It follows that either $a_1$ or $a_2$ bounds a disk 
in $V$. 
By Lemma \ref{lem:characterization of Type 1 and 2}, 
$A$ is a Type 1 or 2 annulus. 
Since $a_1$ and $a_2$ are not parallel by assumption, 
it follows that $A$ is a Type 2 annulus. 
\end{proof}

\begin{lemma}[Classification of the Case 3]
\label{lem:Classification of Case 3}
Let $A \subset E(V)$ be an essential annulus of Case $3$. 
Then $A$ is a Type $1$ annulus.  
\end{lemma}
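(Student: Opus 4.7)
My plan reduces the theorem via Lemma \ref{lem:characterization of Type 1 and 2}(1) to showing that $a_1$ bounds an essential disk in $V$; since $a_1$ and $a_2$ are parallel on $\partial V$, $a_2$ will then also bound such a disk and $A$ will be a Type $1$ annulus. By Lemma \ref{lem:there do not exists (2) nor (3)}, the exterior $E(V)$ is boundary-irreducible, a fact I would use throughout.

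Let $A' \subset \partial V$ be the annulus cobounded by $a_1 \cup a_2$, and form the torus $T = A \cup A' \subset S^3$. By Dehn's lemma applied to the inclusion $A' \hookrightarrow V$, showing that $a_1$ bounds a disk in $V$ is equivalent to showing that $T$ is compressible in $V$ via a disk whose boundary is essential on $A'$ (hence parallel to $a_1$). No embedded torus in $S^3$ is incompressible, so $T$ admits a compressing disk $D$ in $S^3$; I would choose such a $D$ minimizing $|D \cap \partial V|$. The key observation is that $\mathrm{Int}\, D \cap T = \emptyset$, so every interior intersection circle $\gamma$ of $D$ with $\partial V$ is automatically disjoint from $\partial A = a_1 \cup a_2 \subset T$ and therefore lies in one of the three components of $\partial V \setminus (a_1 \cup a_2) = A' \sqcup P_1 \sqcup P_2$, where $P_1, P_2$ are the two once-punctured tori.

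Boundary-irreducibility of $E(V)$, together with Alexander's theorem, rules out innermost sub-disks of $D$ lying in $E(V)$, so every innermost sub-disk of $D$ lies in $V$ with essential boundary $\gamma$. Three subcases then arise: if $\gamma \subset A'$ or $\gamma \subset P_i$ is parallel to $a_i = \partial P_i$, then the innermost sub-disk already provides a disk in $V$ bounded by a curve parallel to $a_1$; if $\gamma \subset P_i$ is non-separating in $P_i$, then cutting $V$ along the innermost sub-disk produces a solid torus $V'$ on whose torus boundary $a_i$ bounds the disk formed by gluing two copies of the sub-disk to $P_i \setminus \gamma$, so $a_i$ (and hence $a_1$) bounds a disk in $V' \subset V$. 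The case $D \cap \partial V \subseteq \partial D$ is handled directly: if $\partial D \subset A'$ then $D \subset V$ (the alternative $D \subset E(V)$ would yield an essential disk in $E(V)$), and $\partial D$ is essential in $A'$ parallel to $a_1$; and if $\partial D \subset A$ then $D$ compresses $A$, contradicting essentiality.

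The main obstacle is the remaining case where $\partial D$ straddles $A$ and $A'$ (crossing $\partial A$ transversely) and $D$ has no interior intersections with $\partial V$: in this case $D$ is forced to lie in the solid-torus component $W_2$ of $E(V) \setminus A$, realized as a meridian disk of $W_2$ whose slope on $T$ differs from that of $\partial A$. To treat this, I would apply an outermost-arc argument to the arcs of $\partial D \cap A$ on the disk $D$, producing a sub-disk whose boundary consists of an essential arc on $A$ together with an arc on $A' \subset \partial V$; this sub-disk is a boundary-compressing disk for $A$ in $E(V)$, contradicting the essentiality of $A$ and thereby closing out the last case.
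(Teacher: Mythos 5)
Your overall strategy (compress the torus $T=A\cup A'$ in $S^3$, minimize the intersection of the compressing disk $D$ with $\partial V$, and analyze the pieces) is different from the paper's, and most of your cases go through: the innermost-circle analysis in $P_1\sqcup P_2$, the case $\partial D\subset A'$, and the case $\partial D\subset A$ are all handled correctly and do yield that $a_1$ bounds a disk in $V$ or a contradiction. The problem is the last case, where $\partial D$ crosses $\partial A$ and $\Int\thinspace D\cap\partial V=\emptyset$. There you correctly locate $D$ in the solid torus $W_2$ cobounded by $A$ and $A'$, but the proposed ``outermost-arc argument'' has nothing to act on: since $\Int\thinspace D\cap T=\emptyset$, the set $D\cap A$ consists of arcs lying entirely in the boundary circle $\partial D$; these are not properly embedded arcs in $D$ and do not cut off sub-disks, so there is no outermost one. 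Worse, the disk you want to produce need not exist. If $\delta\subset W_2$ were a disk whose boundary is an essential arc $\alpha$ of $A$ together with an arc $\beta$ of $A'$, then $\alpha\cup\beta$ meets $a_1$ transversely in exactly one point, so $\partial\delta$ is the meridian of $W_2$ and $|a_1\cdot\mu|=1$. Hence such a boundary-compressing disk exists only when the slope of $a_1$ on $\partial W_2$ meets the meridian exactly once; if $a_1$ were a $(k,s)$-curve with $k\geqslant 2$ (which is exactly the situation your minimal $D$ presents when it meets $A$ in $k\geqslant 2$ boundary arcs), no disk of the required form exists in $W_2$ and your contradiction evaporates.

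What is missing is the reason why $k\geqslant 2$ cannot occur, and this is precisely the homological input that drives the paper's second sub-case: the once-punctured torus $P_1\subset\partial V$ lies in $S^3\setminus\Int\thinspace W_2$ and has boundary $a_1$, so $a_1$ is null-homologous in the exterior of the core of $W_2$ and is therefore its preferred longitude; in particular $|a_1\cdot\mu|=1$. Only after this observation does your argument close (the meridian disk of $W_2$ is then literally a boundary-compressing disk for $A$, or equivalently $A$ is parallel to $A'$ through $W_2$, contradicting essentiality). The paper avoids your case analysis altogether by asking on which side of $T$ the two once-punctured tori lie: if they lie in the solid torus side, they compress there and hence in $V$, giving Type $1$; if they lie outside, the longitude argument above shows $A$ is boundary-parallel. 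You should graft that longitude argument into your final case; as written, the proposal does not rule it out.
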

\begin{proof}
By Lemma \ref{lem:there do not exists (2) nor (3)}, 
we may assume that 
$E(V)$ is boundary-irreducible. 
Let $A' \subset \partial V$ be the annulus with 
$\partial A' = a_1 \sqcup a_2$. 
Then the torus $A \cup A'$ bounds a solid torus $X$ in $S^3$. 
Let $P$ and $Q$ be the once-punctured torus components of 
$\partial V \setminus \Int \thinspace A'$. 
Suppose first that $P \sqcup Q$ is contained in $X$. 
Then $P \sqcup Q$ is compressible in $X$ 
since a solid torus does not contain incompressible 
once-punctured tori. 
Since $\partial V$ is incompressible in $E(V)$, 
$P \sqcup Q$ is compressible in $V$. 
It follows that both $a_1$ and $a_2$ bound disks 
in $V$, which implies by Lemma \ref{lem:characterization of Type 1 and 2} 
that 
$A$ can be extended to a $2$-decomposing sphere of $V$. 
Suppose next that $P \sqcup Q$ is contained in $E(X)$. 
Since both $P$ and $Q$ 
determine Seifert surfaces of the core of $X$, 
both $\partial P$ and $\partial Q$ 
are parallel to the preferred-longitude of $X$. 
This implies that $A$ and $A'$ are parallel in $X$. 
However, this contradicts the assumption that $A$ is essential. 
\end{proof}

We recall the following theorem by Hayashi and Shimokawa, 
which will be needed in the proof of 
Lemma \ref{lem:satellite knots admitting 2-pounc. tori with non-integral boundary slope}. 
\begin{theorem}[\cite{HS98}]
\label{thm:HS98}
Let $Y$ be a solid torus and $K \subset Y$ 
be a non-cabled knot. 
Assume that $\partial Y$ is incompressible in 
$Y \setminus \Int \thinspace \Nbd (K)$. 
Let $Y (K; r)$ be the $3$-manifold obtained 
from $Y$ by performing the Dehn surgery along 
$K$ with the surgery slope $r$. 
If $Y (K; r)$ contains a separating essential annulus 
$\tilde{A}$ such that each component of $\partial \tilde{A}$ is primitive 
with respect to $Y$, then 
the slope $r$ is integral. 
\end{theorem}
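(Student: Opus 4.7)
The plan is to argue by contradiction: assume $r = p/q$ with $q \geq 2$, and derive an inconsistency with the hypotheses on $K$, $Y$, and $\tilde A$.

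First observe that since $c_1, c_2 := \partial \tilde A$ are disjoint primitive simple closed curves on the torus $\partial Y$, they are parallel, and each is a longitude of $Y$ in some common framing (because primitivity for a solid torus is equivalent to intersecting a meridian disk exactly once, and disjoint essential curves on a torus are parallel). I would choose a re-embedding $\iota : Y \hookrightarrow S^3$ so that this framing is the $0$-framing, that is, so that $\iota(c_1)$ and $\iota(c_2)$ bound disjoint meridian disks $\Delta_1, \Delta_2$ of the complementary solid torus $E(\iota(Y)) \subset S^3$. Set $K' := \iota(K)$. Then inside $S^3(K';r) = \iota(Y(K;r)) \cup_\phi E(\iota(Y))$, the closed surface $S := \tilde A \cup \Delta_1 \cup \Delta_2$ is a $2$-sphere.

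Next, I would verify that $S$ does not bound a ball in $S^3(K';r)$: if it did, the side of $S$ contained in $\iota(Y(K;r))$ would provide either a compressing disk for $\tilde A$ or a parallelism of $\tilde A$ to a subsurface of $\partial Y$, each contradicting the essentiality of $\tilde A$. Hence $S^3(K';r)$ is reducible. Since by hypothesis $q \geq 2$, this is a non-integral reducible Dehn surgery on the knot $K' \subset S^3$, and the classification of such surgeries, combined with the non-cabled condition on $K$ (hence on $K'$ via $\iota$), is incompatible with the presence of such a reducing sphere, forcing $q = 1$.

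The main obstacle is the last step: establishing the reducibility assertion rigorously and invoking the appropriate form of the cabling / non-integral-reducible-surgery restriction in this relative setting. The technical route in \cite{HS98} is a direct combinatorial intersection-graph analysis of $A := \tilde A \cap (Y \setminus \Int \Nbd(K))$ against a complete system of meridian disks of the surgery solid torus $V^*$, after first checking, by a standard minimization, that $A$ is an essential planar surface in $Y \setminus \Int \Nbd(K)$. The usual Culler--Gordon--Luecke--Shalen parity and Scharlemann-cycle arguments, refined by the $\mathbb{Z}/q$-periodicity of vertex labels and the primitivity of $c_1, c_2$ (which controls how the graph on $A$ interacts with the two $\partial Y$-boundary components), rule out the configurations that would have to occur when $q \geq 2$.
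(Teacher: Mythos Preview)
This theorem is not proved in the paper; it is quoted from Hayashi--Shimokawa \cite{HS98} and invoked as a black box in the proof of Lemma~\ref{lem:satellite knots admitting 2-pounc. tori with non-integral boundary slope}. There is therefore no argument in the present paper against which to compare your proposal.

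On the substance of your sketch: the re-embedding idea is natural, but the key assertion that the capped-off sphere $S$ fails to bound a ball is not established by what you wrote. Your justification refers to ``the side of $S$ contained in $\iota(Y(K;r))$,'' but neither side of $S$ lies in $\iota(Y(K;r))$: each side has the form $M_i \cup N_i$ with $M_i \subset Y(K;r)$ and $N_i \cong D^2 \times I$ a piece of the filling solid torus. If $M_1 \cup N_1 \cong B^3$, then $M_1$ is the exterior of a properly embedded arc in that ball; only when that arc is \emph{unknotted} does it follow that $M_1$ is a solid torus and hence that $\tilde A$ is boundary-parallel. When the arc is knotted, $\partial M_1$ is incompressible and nothing you have written rules out $\tilde A$ remaining essential in $Y(K;r)$ while $S$ bounds a ball. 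A further warning sign: your argument, as written, would feed directly into Gordon--Luecke \cite{GL87} without ever using the non-cabled hypothesis or the incompressibility of $\partial Y$ in $Y\setminus \Int\,\Nbd(K)$, which strongly suggests the reduction is incomplete. Your closing description of the route in \cite{HS98} --- an intersection-graph analysis of the planar piece $\tilde A \cap (Y \setminus \Int\,\Nbd(K))$ --- points in the right direction for a genuine proof, but as stated it is a plan rather than an argument.
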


\begin{lemma}
\label{lem:satellite knots admitting 2-pounc. tori with non-integral boundary slope}
Let $K$ be a knot in $S^3$. 
If there exists an incompressible twice-punctured torus $P$ in 
$E(K)$ with non-integral boundary slopes that 
cuts off $E(K)$ into two genus two handlebodies, 
then $K$ is a hyperbolic knot. 
\end{lemma}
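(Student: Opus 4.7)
The plan is to argue by contradiction, supposing $K$ is not hyperbolic; then $K$ is the trivial knot, a torus knot, or a satellite knot, and I rule out each case. If $K$ is the trivial knot, then $E(K) \cong S^1 \times D^2$, and classically every properly embedded incompressible surface in a solid torus is a meridian disk or a boundary-parallel annulus, contradicting the existence of the twice-punctured torus $P$. If $K$ is a torus knot $T(p,q)$, then $E(K)$ is Seifert fibered over a disk with two exceptional fibers, and the classification of essential surfaces in Seifert fibered spaces shows that every essential surface in $E(K)$ has boundary slope in $\{0, pq\}$, both integers, again contradicting the hypothesis on $\partial P$.

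The main case is $K$ satellite, where I plan to apply Theorem \ref{thm:HS98}. By descending through the JSJ decomposition of $E(K)$---using that the torus knot case has been excluded so that the hierarchy does not terminate with $K$ being a cable of the unknot---I select a solid torus $Y \subset S^3$ with $K \subset Y$, $\partial Y$ incompressible in $Y \setminus \Int \thinspace \Nbd (K)$, and $K$ not cabled in $Y$. Set $T = \partial Y$, and isotope $P$ to minimize $|P \cap T|$. Since handlebodies contain no essential torus (Lemma \ref{lem:essential surfaces in a handlebody}), $P \cap T$ is non-empty, every component of $P \cap T$ is essential in both surfaces, and $F_i := T \cap V_i$ is incompressible and boundary-incompressible in the handlebody $V_i$.

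By Lemma \ref{lem:essential surfaces in a handlebody}, every essential component of $F_i$ is a disk. Since $\chi(F_1) + \chi(F_2) = \chi(T) = 0$ and disks contribute positive Euler characteristic, not every component can be essential, so at least one $F_i$ contains a boundary-parallel component $F'$. Minimality of $|P \cap T|$ rules out parallelism into a subsurface of $P \subset \partial V_i$, forcing the parallel subsurface in $\partial V_i$ to contain the annulus $A_i = \partial V_i \cap \partial E(K)$; a case analysis then yields, in $V_i$, a product region between a boundary-parallel annulus component of $F_i$ and an annular extension of $A_i$ in $\partial V_i$. After the slope-$r$ Dehn filling on $\partial E(K)$, this product region together with the two meridian disks of the filling gives rise to a properly embedded annulus $\tilde A$ in $Y(K;r)$ with $\partial \tilde A \subset T$. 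Using the handlebody structure of $V_i$ and the non-cabling of $K$ in $Y$, one verifies that $\tilde A$ is essential and separating, and that each component of $\partial \tilde A$ is primitive with respect to $Y$. Theorem \ref{thm:HS98} then forces the slope of $\partial P$ to be integral, the desired contradiction.

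The hardest step will be this final verification that the constructed annulus $\tilde A$ has the required essentiality, separability, and primitivity properties. The various topological types of the parallel subsurface $G' \subset \partial V_i$ (ranging from annuli through to higher-genus subsurfaces such as once-punctured tori) and the subtle interaction of the product regions with the Dehn filling each require a careful argument combining the handlebody structure of the $V_i$ with the non-cabling hypothesis on $K$ in $Y$.
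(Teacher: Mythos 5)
Your overall strategy for the satellite case --- find a companion solid torus $Y$, show that $P$ gives rise to a separating essential annulus $\tilde A$ in $Y(K;r)$ with primitive boundary, and invoke Theorem \ref{thm:HS98} to force integrality of the slope --- is the same as the paper's, and your treatment of the trivial and torus knot cases is fine. But there are two genuine gaps in the satellite case. First, your opening move, ``select a solid torus $Y$ with $K\subset Y$, $\partial Y$ incompressible in $Y\setminus \Int\, \Nbd(K)$, and $K$ not cabled in $Y$,'' is not always possible: if $K$ is, say, the $(2,3)$-cable of a figure-eight knot or of a trefoil, the unique companion solid torus is $\Nbd$ of the companion knot and $K$ \emph{is} cabled in it; excluding ``cable of the unknot'' (i.e.\ torus knots) does not exclude this. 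Theorem \ref{thm:HS98} is simply unavailable in that situation. The paper meets this head on: after its structural analysis it splits into the case where $K$ cannot be isotoped onto $T$ (apply Theorem \ref{thm:HS98}) and the case where $Y_1\setminus \Int\,\Nbd(K)$ is a cabling space, which it kills with Lemma 3.1 of \cite{GL84} on essential planar surfaces in cable spaces. You need an analogue of that second branch.

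Second, the verification you defer as ``the hardest step'' --- that $\tilde A$ is essential, separating, and has boundary primitive in $Y$ --- is precisely where the real content lies, and your setup does not supply the ingredients for it. You only analyze $T\cap V_i$ from the handlebody side via an Euler characteristic count; you never examine $P\cap E(Y)$, the pieces of $P$ on the knot-exterior side of $T$. In the paper, primitivity of $\partial\tilde A$ comes from showing that $P\cap T$ consists of exactly two parallel non-separating curves on $P$ and that $P\cap Y_2$ is the cabling annulus of a torus knot (Claims \ref{claim:there is no boundary-parallel loop}--\ref{claim:essential torus and cabling annulus}), so that $\partial\tilde A$ has integral slope on $\partial Y_1$ and hence meets a meridian disk once. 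Those claims use the non-integrality hypothesis repeatedly and early (e.g.\ a curve of $P\cap T$ parallel to $\partial P$ would yield, by Lemma \ref{lem:perhipherally compressing a torus}, an essential annulus in $E(K)$ with non-integral slope, contradicting Lemma \ref{lem:BZ85}; other configurations are excluded by re-embedding $Y_1$ and producing disks with non-integral boundary). Your argument invokes non-integrality only at the very last step through Theorem \ref{thm:HS98}, so the intermediate configurations of $P\cap T$ that the paper must laboriously exclude are not excluded by anything you have written. A further small point: Theorem \ref{thm:HS98} is applied in the paper only after establishing (Claim \ref{claim:there exist no essential tori}) that $Y_1\setminus\Int\,\Nbd(K)$ contains no essential tori; your choice of ``innermost'' $Y$ via the JSJ decomposition could supply this, but you should say so explicitly.
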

\begin{proof}
It is clear that $K$ is neither the trivial knot nor 
a torus knot since 
it is well-known that these knots do not contain 
essential twice-punctured tori in their exteriors. 
Let $K$ be a satellite knot. 
Then there exists an essential torus in $E(K)$. 
Each essential torus $T$ cuts off $S^3$ into two components 
$Y_1$ and $Y_2$, where $Y_1$ is a solid torus. 
We remark that $K \subset Y_1$, otherwise $T$ is 
compressible in $E(K)$. 
Assume that 
$\#  ( P \cap T )$ is minimal up to isotopy of $T$. 
We note that $P \cap T \neq \emptyset$ since 
$P$ cuts off $E(V)$ into two handlebodies $V$ and $V'$. 
We also note that each component of 
$P \cap Y_2$ is essential 
since $P$ is essential and $\#(P \cap T)$ is minimal. 
Let $K_1$ be the core of the solid torus $Y_1$.

\begin{claim}
\label{claim:there is no boundary-parallel loop}
No component of $P \cap T$ is parallel to 
a component of $\partial P$ on $P$. 
\end{claim}
\noindent {\it Proof of Claim $\ref{claim:there is no boundary-parallel loop}$.}
Assume for contradiction that $P \cap T$ contains a 
simple closed curve $l$ parallel to $\partial P$ on $P$. 
Without loss of generality, we may assume that $l$ cuts off  
an annulus $P_0$ from $P$ so that 
$\Int \thinspace P_0 \cap T = \emptyset$. 
\begin{figure}[!hbt]
\centering
\includegraphics[width=3cm,clip]{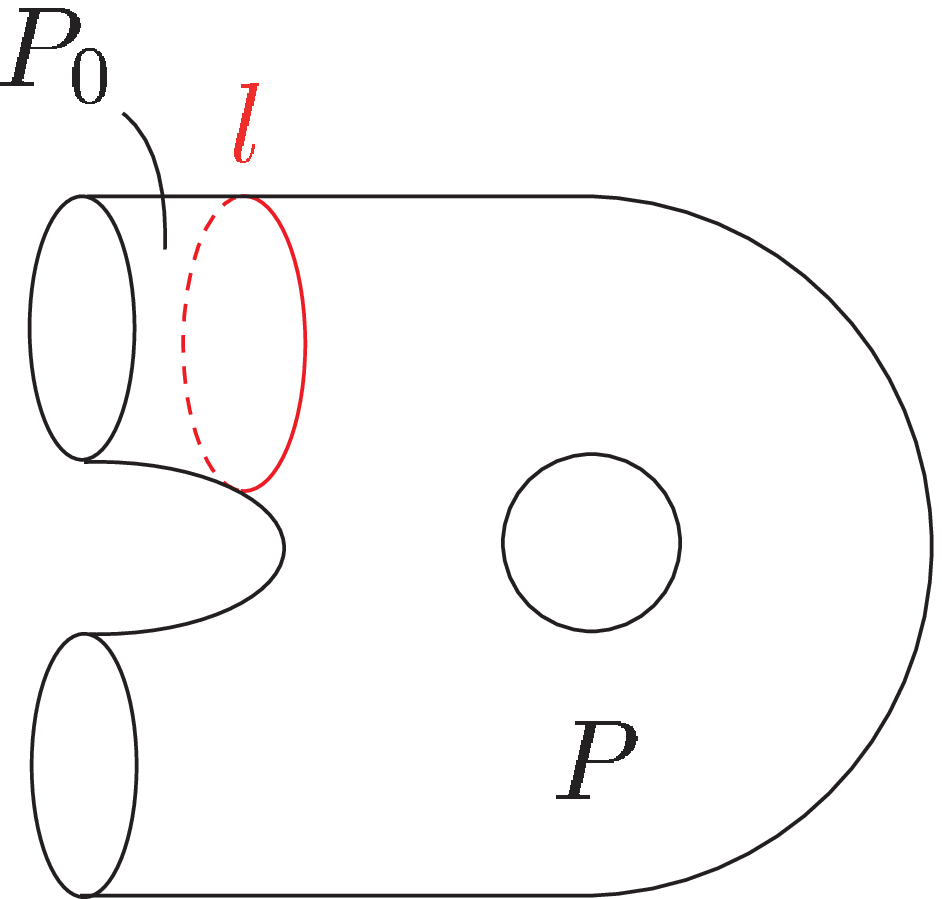}
\caption{}
\label{twice_punctured_torus1}
\end{figure}
See Figure \ref{twice_punctured_torus1}. 
Then $P_0$ is a peripherally compressing annulus for $T$. 
By Lemma \ref{lem:perhipherally compressing a torus}, 
we obtain by peripherally compressing $T$ along $P_0$ 
an essential annulus $T'$ in $E(K)$ 
with non-integral boundary-slope. 
This contradicts Lemma \ref{lem:BZ85}. 

\begin{claim}
\label{claim:mutually parallel loops}
The number of mutually parallel loops of $P \cap T$ on $P$ is 
at most two. 
\end{claim}
\noindent {\it Proof of Claim $\ref{claim:mutually parallel loops}$.}
Assume for contradiction that 
$P \cap T$ contains 
mutually parallel $n \geqslant 3$ loops on $P$. 
Then there exist annulus components 
$P_1 \subset P \cap Y_1$ and 
$P_2 \subset P \cap Y_2$. 
Recall that $P_2$ is essential in $Y_2$. 
By Lemma \ref{lem:BZ85}, 
$P_2$ is a cabling annulus for $K_1$, or 
$P_2$ can be extended to a decomposing sphere for $K_1$. 

In the former case, the slopes $P \cap T$ are 
integral with respect to the meridian and preferred longitude 
of $K_1$. 
Hence $P_1$ is parallel to $\partial Y_1$ from both side. 
This implies that we can reduce the number of components of 
$P \cap T$, whence a contradiction. 

In the latter case, the slopes $P \cap T$ bound 
meridian disks in $Y_1$. 
By Claim \ref{claim:there is no boundary-parallel loop}, 
each component of $P$ cut off by $P \cap T$ 
is either an annulus, a pair of pants, 
a 4-punctured sphere or a once-punctured torus. 
We see that $P \cap Y_2$ consists of only essential annuli 
as follows. 
Let $Q$ be a component of $P \cap Y_2$. 
Since $P \cap \partial Y_2$ is meridional in $Y_2$, 
$\# \partial Q$ is even, otherwise 
$S^3$ contains a non-separating sphere or torus, which 
is a contradiction. 
Thus $Q$ is neither a pair of pants nor a once-punctured torus. 
On the other hand, by Claim \ref{claim:there is no boundary-parallel loop}, 
a $4$-punctured sphere component of $P$ cut off by $P \cap T$ (if any) 
lies in $Y_1$. 
Thus $Q$ is not a 4-punctured sphere. 
As a consequence, $Q$ is an annulus. 
Among the essential annuli $P \cap Y_2$, 
take an outermost one $P'_2$ in $Y_2$. 
By tubing $P'_2$ along a sub-annulus on $T$ 
whose interior does not intersect $P$, 
we obtain an essential torus $T'$ in $E(K)$ with $P \cap T' = \emptyset$. 
See Figure \ref{fig:tubing_decomposing_annulus}. 
\begin{figure}[!hbt]
\centering
\includegraphics[width=4.5cm,clip]{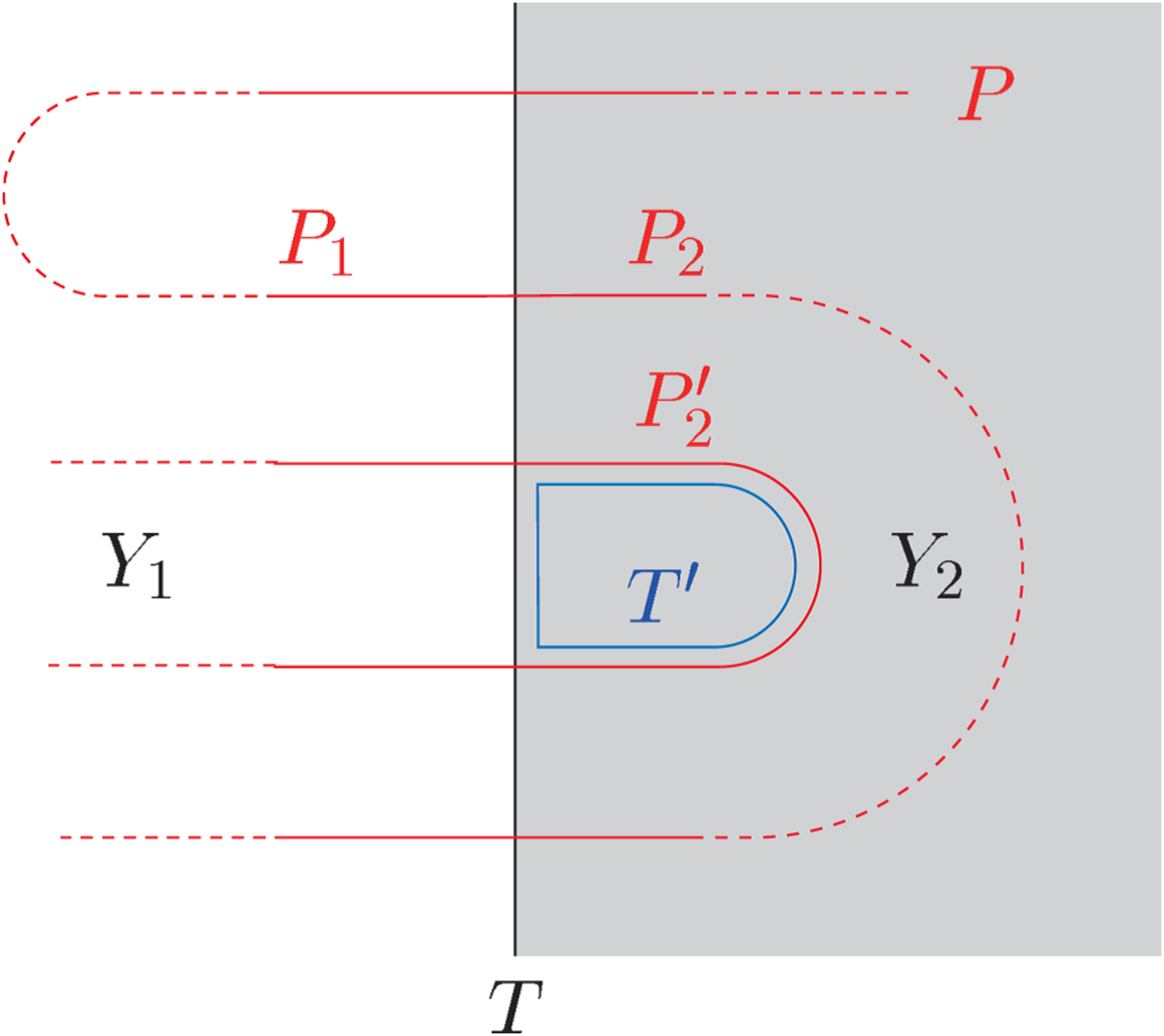}
\caption{}
\label{fig:tubing_decomposing_annulus}
\end{figure}
This implies $T' \subset V$ or  $T' \subset V'$. 
Then we have $T' \subset V$ or $T' \subset V'$. 
This contradicts Lemma \ref{lem:essential surfaces in a handlebody}. 

\begin{claim}
\label{claim:there is no separating loop}
$P \cap T$ does not contain separating simple closed curves on $P$. 
\end{claim}
\noindent {\it Proof of Claim $\ref{claim:there is no separating loop}$.}
Assume for contradiction that $P \cap T$ contains a separating 
simple closed curve $l$ on $P$. 
By Claim \ref{claim:there is no boundary-parallel loop}, 
$l$ is parallel to no component of $\partial P$. 
\begin{figure}[!hbt]
\centering
\includegraphics[width=3cm,clip]{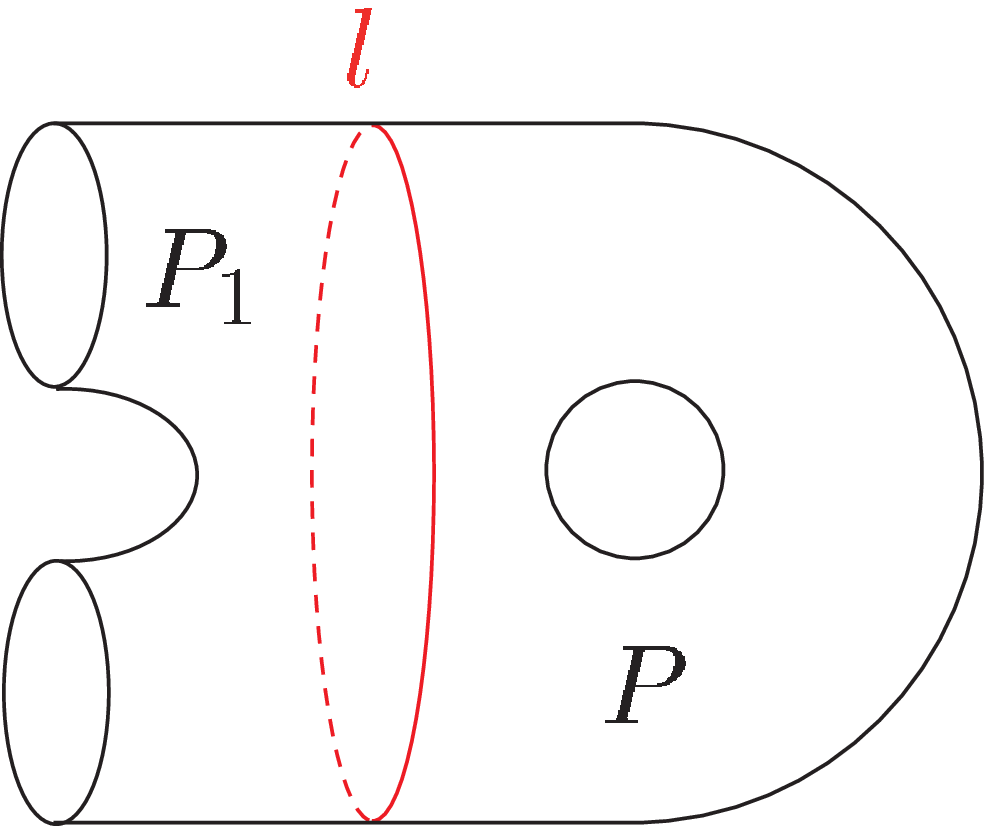}
\caption{}
\label{twice_punctured_torus2}
\end{figure}
If there exist components of $P \cap T$ 
which are not parallel to $l$ on $P$, 
they must be mutually parallel 
non-separating simple closed curves. 
Moreover, since $T$ is separating in $E(K)$, 
the number of such components are exactly two by 
Lemma \ref{claim:mutually parallel loops}. 
 Let $m$ be the number of components of $P \cap T$ parallel 
to $l$ on $P$. 
Let $n$ be the number of the non-separating components of $P \cap T$. 
Then $(m,n)$ is $(1,0)$, $(2,0)$, $(1,2)$ or $(2,2)$. 
Let $P_1$ be the pair of pants component of 
$P$ cut off by $P \cap T$ 
such that $\partial P \subset \partial P_1$. 
See the right-hand side of Figure \ref{twice_punctured_torus2}. 
When $(m,n) = (1,0)$, let $P_2$ be 
the once-punctured torus component of 
$P \setminus \Int \thinspace \Nbd(P \cap T; P)$. 
Then $P_2$ is a Seifert surface of the core 
$K_1$ of $Y_1$. 
In particular, the slope $l$ is the preferred longitude of $Y_1$. 
Hence there is a re-embedding  
$h: Y_1 \to S^3$ such that $h(l)$ bounds a disk in $E(h(Y_1))$. 
Then by adding a disk, $h (P_1)$ 
can be extended to a proper annulus $\hat{A}$ 
in $E(h(K))$ with non-integral boundary slope. 
It follows that $\hat{A}$ is parallel to the boundary of $E(h(K))$. 
However, $\hat{A}$ must be non-separating in $E(h(K))$ since 
$\hat{A}$ intersects $E (h(Y_1))$ 
in a single meridional disk. 
This is a contradiction. 
When $(m,n) = (1,2)$, $P \cap Y_2$ contains a component 
which is an essential pair of pants in $Y_2 = E(Y_1)$. 
This contradicts Lemma \ref{lem:odd-punctured spheres}. 
When $(m,n) = (2,0)$ or $(2,2)$, by Lemma \ref{lem:BZ85}, 
the boundary-slope of $P \cap T$ is 
cabling or meridional for $Y_1$. 
In the former case, 
we also have a contradiction by a similar argument of the case $(m,n) = (1,0)$. 
In the later case, there is a component of $P \cap Y_2$ that 
can be extended to a decomposing sphere for $K_1$. 
Then by the same argument of the last part of the proof of 
Claim \ref{claim:mutually parallel loops}, 
there exists an essential torus in $E(K)$ which 
does not intersect $P$. This is a contradiction.  

\begin{claim}
\label{claim:there exist only two parallel non-separating loop}
$T \cap P$ consists of two parallel non-separating 
simple closed curves on $P$. 
\end{claim}
\noindent {\it Proof of Claim $\ref{claim:there exist only two parallel non-separating loop}$.}
By Claims \ref{claim:mutually parallel loops} and 
\ref{claim:there is no separating loop}, 
$P \cap T$ consists of two parallel non-separating 
simple closed curves on $P$ 
(see the left-hand side of Figure \ref{twice_punctured_torus3}), or 
four non-separating 
simple closed curves on $P$ such that the two of them are parallel 
and the remaining two are also parallel 
(see the right-hand side of Figure \ref{twice_punctured_torus3}).  
\begin{figure}[!hbt]
\centering
\includegraphics[width=8cm,clip]{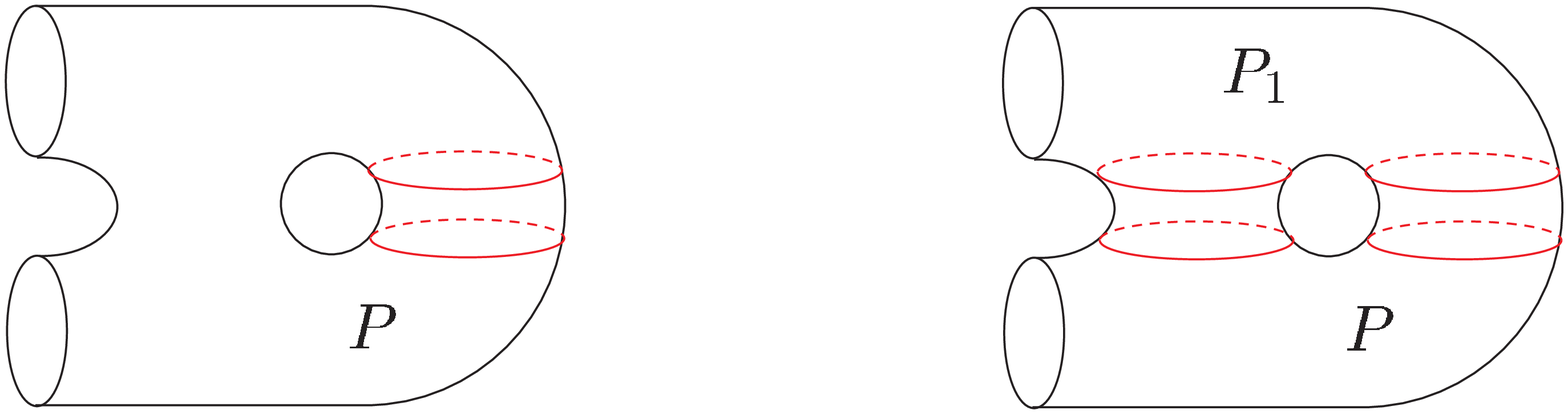}
\caption{}
\label{twice_punctured_torus3}
\end{figure}
In the latter case, let $P_1$ be one of the two pairs of pants 
of $P$ cut off by $P \cap T$. 
Since $P \cap T$ consists of mutually parallel integral slope on 
$T$ with respect to the knot $K_1$, 
we can re-embed $Y_1$ by a map $h: Y_1 \to S^3$ so that 
each component of $h(P \cap T)$ bounds a disk in $E(h(Y_1))$. 
Then by adding disks to $h(P_1)$ 
along the boundary circles $h(\partial P_1 \setminus \partial N(K))$, we obtain a disk 
whose boundary is not integral with respect to $h(K)$. 
This is a contradiction.  

\begin{claim}
\label{claim:essential torus and cabling annulus}
$K_1$ is a torus knot and $Y_2 \cap T$ is the cabling annulus. 
\end{claim}
\noindent {\it Proof of Claim $\ref{claim:essential torus and cabling annulus}$.} 
By Claim 
\ref{claim:there exist only two parallel non-separating loop}, 
both $V \cap T$ and $V' \cap T$ 
are separating incompressible annuli 
in the handlebodies $V$ and $V'$, respectively. 
Then it follows from the classification of 
esential separating annuli in a genus two 
handlebody \cite{Kob84}, 
both $V \cap Y_2$ and $V' \cap Y_2$ 
are solid tori. 
This fact and Lemma \ref{lem:BZ85} imply that 
$K_1$ is a torus knot and $P \cap Y_2$ is its cabling annulus. 

\begin{claim}
\label{claim:there exist no essential tori}
There exist no essential tori in $Y_1 \setminus \Int \Nbd(K)$. 
\end{claim}
\noindent {\it Proof of Claim $\ref{claim:there exist no essential tori}$.}
Assume for contradiction that $Y_1 \setminus \Int \Nbd(K)$ contains 
an essential torus $T'$. 
We also assume that $\#(P \cap T')$ is minimal up to isotopy in 
$Y_1 \setminus \Int \Nbd(K)$. 
Clearly, $T'$ is also essential in $E(K)$ and $T'$ cuts $S^3$ 
into two components $Y'_1$ and $Y'_2$, where $Y'_1$ is a solid torus. 
Then by Claim \ref{claim:essential torus and cabling annulus}, 
$Y'_2$ is also a torus knot exterior. 
We note that $Y_2$ and $Y'_2$ are disjoint, otherwise 
$T'$ is parallel to $T$ in $E(K)$. 
Since $P \cap Y'_2$ is essential in $Y'_2$, $P \cap Y'_2$ is 
a non-empty disjoint union of the cabling annuli in $Y'_2$. 
Let $\gamma$ be the core of the annulus $P \cap Y_2$ and 
let $\gamma_1, \gamma_2, \ldots, \gamma_n$ be the cores of the annuli of $P \cap Y'_2$. 
Since $Y_2$ and $Y'_2$ are disjoint, we may assume (up to isotopy) that 
$\gamma \cap (\bigcup_{i=1}^n \gamma_i ) = \emptyset$, $P \cap Y_2 = \Nbd (\gamma ; P)$ and 
$P \cap Y'_2 = \Nbd ( \bigcup_{i=1}^n \gamma_i ; P)$. 
By the same argument in the proof of Claim \ref{claim:there is no separating loop}, 
none of $\gamma_1, \gamma_2, \ldots, \gamma_n$ is separating in $P$. 
Assume that a component one of the circles $\gamma_1, \gamma_2, \ldots, \gamma_n$, 
say $\gamma_1$, is parallel to $\gamma$ on $P$. 
Let $T_1 = T \cap V_1$ and let $T'_1$ be a component of $T' \cap V_1$ such that 
$\partial T'_1 =  \partial \Nbd (\gamma_1 ; P)$. 
We remark that $T_1$ and $T'_1$ are separating incompressible annuli in $V_1$ and 
all components of $\partial T_1$ and $\partial T'_1$ are parallel on $P$.  
Then by \cite{Kob84}, $T_1$ must be contained in the solid torus component of 
$V_1$ cut off by $T'_1$. 
This is impossible since $Y_2$ and $Y'_2$ are disjoint. 
Therefore, $P$ cut off by $T \cup T'$ contains a pair of pants component $P_1$ 
exactly one of whose boundary components lies on $\partial N(K)$. 
Now, as in the proof of Claim \ref{claim:there exist only two parallel non-separating loop}, 
we can re-embed $Y_1 \cap Y'_1$ by a map $h: Y_1 \cap Y'_1 \to S^3$ so that 
each component of $h(P \cap (T \cup T'))$ bounds a disk in $E(h(Y_1 \cap Y'_1))$. 
Then by adding disks to $h(P_1)$ 
along the boundary circles $h(\partial P_1 \setminus \partial N(K))$, we obtain a disk 
whose boundary is not integral with respect to $h(K)$. 
This is a contradiction.

\vspace{1em}
We set $M = Y_1 \setminus \Int \thinspace N(K)$. 
By Claim \ref{claim:there exist only two parallel non-separating loop}, 
$P \cap M$ is a essential separating 4-punctured sphere in $M$. 
Then $P \cap M$ is 
naturally extend to a separating essential annulus 
$\tilde{A}$ in the 3-manifold $Y_1(K ; r)$ obtained 
from $Y_1$ by performing the Dehn surgery along 
$K$ with the surgery slope $r$ defined by 
the boundary slope of $P$ on $\partial N(K)$. 
See Figure \ref{type4_annulus_2}. 
\begin{figure}[!hbt]
\centering
\includegraphics[width=13.5cm,clip]{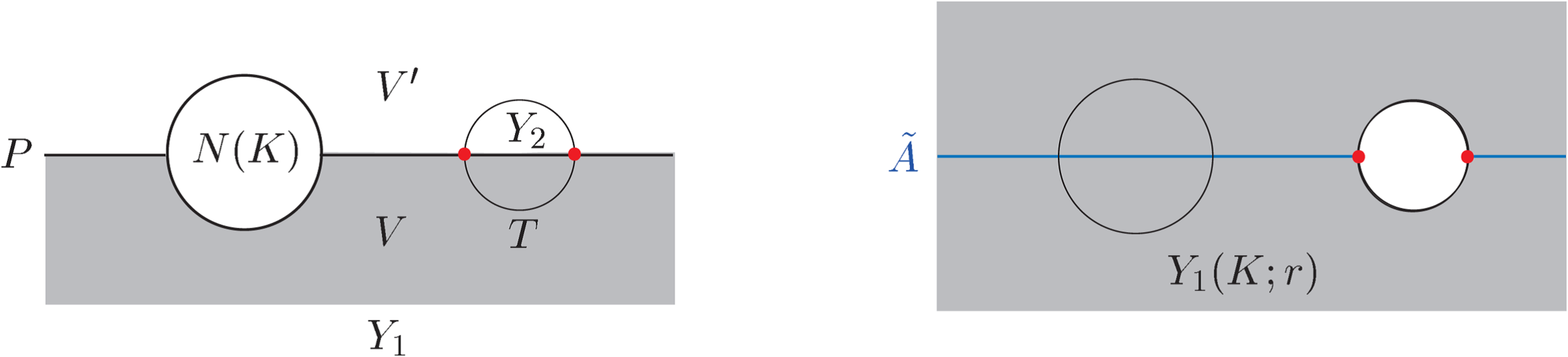}
\caption{}
\label{type4_annulus_2}
\end{figure}
Since $P \cap Y_2$ is the cabling annulus 
of the core $K_1$ of $Y_1$, 
each component of $\partial \tilde{A} = P \cap \partial Y_1$ is 
primitive with respect to the solid torus $Y_1$. 
By definition $\partial Y_1 = T$ is incompressible in 
$M$. 
Also, by Claim \ref{claim:there exist no essential tori}, 
$M$ does not 
contain essential tori. 
Hence if $K$ can not be isotoped onto $T$, 
it follows from Theorem \ref{thm:HS98} that the slope $r$ is integral. 
This is a contradiction. 
Otherwise, $M$ is 
a Seifert fiber space so-called a {\it cabling space}, and 
$P \cap M$ is an essential 4-punctured sphere in it. 
However, this is impossible by Lemma 3.1 in \cite{GL84}.  
\end{proof}

The following theorem by Przytycki describes the incompressibility of surfaces 
before and after performing the Dehn filling.  
\begin{theorem}[\cite{Prz83}]
\label{thm: Theorem of Przytycki}
Let $M$ be a compact $3$-manifold whose boundary is a single torus. 
Let $P$ be a compact orientable surface properly embedded in $M$ such that 
\begin{enumerate}
\item
$P$ cuts off $M$ into two handlebodies; 
\item
$\partial P$ consists of two non-trivial simple closed curves on $\partial M$; and 
\item
$P$ is not parallel to $\partial M$. 
\end{enumerate}
Let $\hat{M}$ be the $3$-manifold obtained from $M$ by performing the Dehn filling 
along the boundary slope of $P$. 
Let $\hat{P}$ be the surface in $\hat{M}$ naturally obtained by capping off the boundary of $P$. 
Then $P$ is incompressible in $M$ if and only if $\hat{P}$ is incompressible in $\hat{M}$. 
\end{theorem}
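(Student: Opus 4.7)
The plan is to prove both implications by standard cut-and-paste arguments that exploit the Heegaard-like decomposition of $M$ from condition (1) and its extension to $\hat{M}$ by attaching $2$-handles. First I would set up notation: let $V \subset \hat{M}$ denote the filling solid torus, so that $\hat{M} = M \cup V$. Since the filling slope equals the boundary slope of $P$, the two components $\partial_1 P$ and $\partial_2 P$ bound parallel meridian disks $D_1, D_2$ of $V$, giving $\hat{P} = P \cup D_1 \cup D_2$. These two disks split $V$ into two $3$-balls $B_1, B_2$ and split $\partial M$ into two annuli $A_1, A_2$, so that the decomposition $M = H_1 \cup_P H_2$ with $\partial H_i \cap \partial M = A_i$ extends to $\hat{M} = \hat{H}_1 \cup_{\hat{P}} \hat{H}_2$, where each $\hat{H}_i = H_i \cup_{A_i} B_i$ is the handlebody $H_i$ with a $2$-handle attached.

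Before addressing the two directions I would establish a key observation: a simple closed curve $c \subset P$ that is essential on $P$ is inessential on $\hat{P}$ only if $c$ bounds a disk $\Delta \subset \hat{P}$ containing at least one capping disk. A short Euler characteristic computation shows that if $\Delta$ contains exactly one $D_j$, then $\Delta \setminus D_j \subset P$ is an annulus, and hence $c$ is parallel to $\partial_j P$ on $P$; the remaining subcase, in which $\Delta$ contains both $D_1$ and $D_2$, is handled by an analogous pair-of-pants argument.

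For the forward direction, given a compressing disk $\hat{D}$ for $\hat{P}$ in the side $\hat{H}_i$ containing it, I would minimize $|\hat{D} \cap A_i|$ by isotopy. Irreducibility of the handlebody $H_i$ and of the $3$-ball $B_i$ allows innermost-circle and outermost-arc surgeries to reduce the intersection to empty, placing $\hat{D}$ entirely in $B_i$ or entirely in $H_i$. The first possibility is absurd because $\partial \hat{D} \subset D_1 \cup D_2 \subset \hat{P}$ would then bound a disk in $\hat{P}$, while in the second $\hat{D} \subset M$ and $\partial \hat{D}$ is essential on $P$ (any curve inessential on $P$ is also inessential on $\hat{P}$), producing the desired compressing disk for $P$.

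For the backward direction, given a compressing disk $D$ for $P$ in some $H_i$, I would first check whether $\partial D$ is essential on $\hat{P}$: if so, $D$ itself compresses $\hat{P}$. Otherwise, the key observation yields an annulus $A \subset P$ between $\partial D$ and some $\partial_j P$, so that $E = D \cup A$ is a compressing disk for $\partial M$ in $M$ of the filling slope. Capping $E$ with $D_j$ gives a $2$-sphere $S = E \cup D_j \subset \hat{M}$ meeting $\hat{P}$ in the disk $A \cup D_j$; pushing this disk slightly into $\hat{H}_i$ produces a $2$-sphere $S' \subset \hat{H}_i$. The remaining step would be to show, using the handlebody-plus-$2$-handle structure of $\hat{H}_i$ together with incompressibility of $\hat{P}$, that $S'$ bounds a $3$-ball in $\hat{H}_i$ realizing a parallelism between $D$ and $A \cup D_j$, and then to translate this parallelism back into $M$ to produce a product region cobounding $P$ (or a sub-surface) with a piece of $\partial M$, contradicting condition (3). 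This last translation is the main obstacle: one must keep precise track of how the attached $2$-handle $B_i$ interacts with the compression, and treat the pair-of-pants subcase of the key observation by an analogous argument in which a pair of pants $\Pi \subset P$ plays the role of the annulus $A$.
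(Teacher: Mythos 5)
First, a remark on the ground truth: the paper gives no proof of this statement -- it is quoted from Przytycki \cite{Prz83} -- so your proposal can only be judged against what such a proof must contain. Your setup is correct: the decomposition $\hat{M}=\hat{H}_1\cup_{\hat{P}}\hat{H}_2$ with $\hat{H}_i=H_i\cup_{A_i}B_i$ a handlebody with a $2$-handle attached, and the ``key observation'' that a curve essential on $P$ but inessential on $\hat{P}$ is parallel in $P$ to a component of $\partial P$ (or cobounds a pair of pants with both of them), would appear in any treatment.

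The genuine gap is in the forward direction. The assertion that irreducibility of $H_i$ and $B_i$ lets innermost-circle and outermost-arc surgeries empty $\hat{D}\cap A_i$ is exactly where the content of the theorem lives, and it is not justified. One can indeed eliminate all arcs and all circles that are inessential in $A_i$, and a circle that is a core of $A_i$ bounding an innermost subdisk of $\hat{D}$ in $H_i$ immediately compresses $P$; but a core circle bounding a subdisk of $\hat{D}$ lying in the $2$-handle $B_i$ cannot be removed by an exchange. Such a subdisk is parallel in the ball $B_i$ to the meridian disk $D_1$ (it records an intersection of $\hat{D}$ with the co-core of the $2$-handle), and the only available product region ends on $D_1\subset\hat{P}=\partial\hat{H}_i$, so there is nowhere to push it. Tellingly, your sketch of this direction never uses hypothesis (1); yet the implication is precisely an instance of the $2$-handle addition lemma (Przytycki, Jaco, Casson--Gordon), which is known not to follow from a routine cut-and-paste argument. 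A workable elementary route is: after the reductions above, $\hat{D}\cap A_i$ consists of cores of $A_i$, so $Q=\hat{D}\cap H_i$ is a planar surface in the handlebody $H_i$ whose boundary is $\partial\hat{D}$ together with curves parallel to $\partial_1P$; since every incompressible, boundary-incompressible surface in a handlebody is a disk (Lemma \ref{lem:essential surfaces in a handlebody}), compressing and boundary-compressing $Q$ either reduces complexity or produces a properly embedded disk in $H_i$ whose boundary is a band-sum of components of $\partial P$ and $\partial\hat{D}$, and an Euler characteristic count using $g(P)\geq 1$ shows that curve is essential in $P$, contradicting incompressibility. The backward direction is also incomplete, as you acknowledge: after forming the sphere $D\cup A\cup D_1$ one still has to show that the core of $A_i$ bounding a disk in $H_i$ forces $\hat{P}$ to compress in $\hat{H}_i$ (note $\partial_1P$ is non-separating on $\partial H_i=P\cup_{\partial P}A_i$, so the resulting sphere in $\hat{H}_i$ is non-separating, which is where hypotheses (1) and (3) enter). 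As written, neither implication is established.
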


Theorems \ref{thm:classification of hyperbolic knots admitting a non-integral toroidal surgery} 
and \ref{thm: Theorem of Przytycki} together with 
Lemmas \ref{lem:handlebodies and Eudave-Munoz knots} and 
\ref{lem:satellite knots admitting 2-pounc. tori with non-integral boundary slope} 
provide the following corollary, which plays a key role 
for the classification of the essential annuli of Case 4. 

\begin{corollary}
\label{cor:essential twice-punctuerd tori and Eudve-Munoz knots}
Let $K$ be a knot in $S^3$. 
Let $P$ be a compact orientable twice-punctured torus properly embedded in $E(K)$. 
Then $P$ is an essential surface that cuts off $E(K)$ into two genus two handlebodies 
if and only if 
$K$ is an Eudave-Mu\~noz knot 
and $P$ in an incompressible twice-punctured torus 
properly embedded in $E(K)$ such that $\partial P$ consists of 
the two parallel toroidal slopes of $K$. 
\end{corollary}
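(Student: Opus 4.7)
The plan is to prove both directions of the equivalence using Theorem \ref{thm: Theorem of Przytycki} (Przytycki) as the key bridge between incompressibility of $P$ in $E(K)$ and incompressibility of the capped-off torus $\hat{P}$ in the Dehn-filled manifold $S^3(K;r)$, where $r$ denotes the common slope of the two components of $\partial P$.

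For the backward direction, Lemma \ref{lem:handlebodies and Eudave-Munoz knots} directly supplies the handlebody decomposition of $E(K)$. It then remains to promote incompressibility of $P$ to essentiality: $P$ is not boundary-parallel since $\partial E(K)$ is a torus while $\chi(P) = -2$, and boundary-incompressibility I would verify by arguing that a boundary-compression would produce either an essential annulus in $E(K)$ carrying the non-integral Eudave-Mu\~noz toroidal slope---contradicting Lemma \ref{lem:BZ85}---or a surface whose capping-off in $S^3(K;r)$ would contradict Theorem \ref{thm: Theorem of Przytycki} applied to the toroidal surgery manifold.

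For the forward direction, I would first observe that the two components of $\partial P$ are disjoint essential simple closed curves on the torus $\partial E(K)$, hence parallel, and therefore determine a common slope $r$. The three hypotheses of Theorem \ref{thm: Theorem of Przytycki} are then satisfied, so $\hat{P}$ is incompressible (and hence essential) in $S^3(K;r)$. Assuming $r$ is non-integral, Lemma \ref{lem:satellite knots admitting 2-pounc. tori with non-integral boundary slope} gives that $K$ is hyperbolic, and Theorem \ref{thm:classification of hyperbolic knots admitting a non-integral toroidal surgery} then identifies $K$ as an Eudave-Mu\~noz knot with $r$ equal to its non-integral toroidal slope, which is precisely the desired conclusion.

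The principal obstacle is to rule out the possibility that $r$ is integral. My approach would be to exploit the decomposition $S^3(K;r) = M_1 \cup_{\hat{P}} M_2$ induced by the handlebody splitting $E(K) = V_1 \cup_P V_2$, where each $M_i$ is the genus two handlebody $V_i$ with a 2-handle attached along the core of the annulus $A_i = V_i \cap \partial E(K)$. Essentiality of $\hat{P}$ combined with the integer-slope hypothesis would, via an analogue of the proof strategy of Lemma \ref{lem:satellite knots admitting 2-pounc. tori with non-integral boundary slope} (using Lemma \ref{lem:BZ85} to classify the essential annuli that arise by peripheral compressions and Theorem \ref{thm:HS98} to pin down the integrality of surgery slopes in terms of primitive separating annuli), yield the desired contradiction.
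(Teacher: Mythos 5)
Your reconstruction of the argument is exactly the paper's: the authors give no proof beyond citing Theorems \ref{thm:classification of hyperbolic knots admitting a non-integral toroidal surgery} and \ref{thm: Theorem of Przytycki} and Lemmas \ref{lem:handlebodies and Eudave-Munoz knots} and \ref{lem:satellite knots admitting 2-pounc. tori with non-integral boundary slope}, and your backward direction and your non-integral forward direction assemble these four results exactly as intended. (For boundary-incompressibility in the backward direction you do not need the detour through Lemma \ref{lem:BZ85}; it suffices to quote the standard fact that an incompressible surface with boundary on a torus boundary component of an irreducible manifold is boundary-incompressible unless it is a boundary-parallel annulus.)

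The genuine problem is the step you yourself flag: ruling out an integral boundary slope. Your proposed fix --- rerunning the strategy of Lemma \ref{lem:satellite knots admitting 2-pounc. tori with non-integral boundary slope} --- cannot succeed, because every contradiction in that proof is extracted from non-integrality (the peripheral compressions there produce essential annuli of non-integral slope, contradicting Lemma \ref{lem:BZ85}, and Theorem \ref{thm:HS98} only gives a contradiction because its conclusion is precisely that the slope \emph{is} integral). Worse, the integral case actually occurs, so no argument can close this gap: if $K$ is a non-separating simple closed curve on the standard genus two Heegaard surface $\Sigma = \partial W_1 = \partial W_2$ of $S^3$ such that $\Sigma \setminus K$ is incompressible in both handlebodies, then $P = \Sigma \cap E(K)$ is an essential twice-punctured torus of integral (surface-framing) boundary slope cutting $E(K)$ into the two genus two handlebodies $W_i \cap E(K) \cong W_i$. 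Such curves exist: the band sum of $(2,3)$-curves on the two handles represents $x^2y^2$ in $\pi_1(W_1)$ and $a^3b^3$ in $\pi_1(W_2)$, neither of which is conjugate into a proper free factor, so both $2$-handle additions are boundary-irreducible Seifert fibered spaces over $D^2$ with two exceptional fibers and $P$ is essential, while $K = T_{2,3} \# T_{2,3}$ is not an Eudave-Mu\~noz knot. Hence the ``only if'' direction is false as stated, and the corollary needs the extra hypothesis that the boundary slope of $P$ is non-integral. This costs the paper nothing, since in its only application (Lemma \ref{lem:Classification of Case 4}) non-integrality of the slope has already been established from the essentiality of the annulus $A$ before the corollary is invoked; but your instinct that this is the crux was correct, and neither your sketch nor the paper's one-line proof actually handles it.
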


\begin{lemma}[Classification of Case 4]
\label{lem:Classification of Case 4}
Let $A \subset E(V)$ be an essential annulus of Case $4$. 
Then $A$ is 
a Type $1$, $3$-$1$, $3$-$2$ or $4$ annulus.  
\end{lemma}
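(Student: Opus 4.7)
The plan is to set $A' \subset \partial V$ to be the annulus component and $P \subset \partial V$ the twice-punctured torus component of $\partial V \setminus \Int N(\partial A)$, and to let $T = A \cup A'$, a torus in $S^3$ separating $S^3$ into two closed regions $N_1 \supset V$ and $N_2$. I will first dispose of Type 1: if $a_1$ bounds an essential disk $E_1 \subset V$, then since $a_1$ and $a_2$ cobound $A'$ on $\partial V$, I can slide $E_1$ across a collar of $A'$ in $V$ to obtain a parallel essential disk $E_2 \subset V$ with $\partial E_2 = a_2$; the sphere $S := A \cup E_1 \cup E_2$ then meets $V$ in two essential disks, so $S$ is a $2$-decomposing sphere for $(S^3, V)$ and $A$ is a Type $1$ annulus. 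From this point on, I may assume that neither $a_1$ nor $a_2$ bounds an essential disk in $V$, and split into cases based on the boundary-reducibility of $E(V)$.

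In Case A ($E(V)$ boundary-reducible), I will invoke Lemma \ref{lem:intersection of an essential disk and an annulus} to produce an essential disk $D \subset E(V)$ with $D \cap A = \emptyset$. A case analysis on whether $D$ is separating or non-separating, and on whether $\partial D \subset A'$ or $\partial D \subset P$, combined with the standing assumption that $\partial A$ bounds no disk in $V$, will yield a solid torus $X \subset S^3$ and a properly embedded arc $\alpha$ such that either $V = X \setminus \Int N(\alpha)$ with $\alpha$ a trivial arc in $X$ (realizing $A$ as a Type $3$-$1$ annulus) or $V = X \cup N(\alpha)$ with $\alpha$ a simple arc in $E(X)$ (realizing $A$ as a Type $3$-$2$ annulus). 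The arc $\alpha$ or its dual disk will be read off from $D$, while the torus $T$ will supply $\partial X$ after possibly tubing or boundary-compressing across $D$.

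In Case B ($E(V)$ boundary-irreducible), $\partial V$ is incompressible in $E(V)$, so in particular $P$ is incompressible in $N_1 \setminus V$. I further split on the topology of $N_1$. If $N_1$ is a solid torus, then the handlebody structure of $V$ together with the incompressibility of $P$ produces a decomposition $V = X \cup N(\alpha)$ for a solid torus $X \subset V$ and an arc $\alpha \subset E(X)$, giving $A$ as Type $3$-$2$. Otherwise $N_1 = E(K)$ for some non-trivial knot $K \subset S^3$ with $T = \partial N(K)$; I then need to show that $P$ is essential in $E(K)$ (using boundary-irreducibility of $E(V)$ for incompressibility on the $N_1 \setminus V$ side and the Step $1$ reduction for incompressibility on the $V$ side, and a standard outermost-arc argument for boundary-incompressibility) and that $N_1 \setminus V$ is a genus two handlebody, so that $P$ splits $E(K)$ into two genus two handlebodies $V$ and $N_1 \setminus V$. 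Corollary \ref{cor:essential twice-punctuerd tori and Eudve-Munoz knots} will then imply that $K$ is an Eudave-Mu\~noz knot with $\partial P$ its pair of parallel toroidal slopes, and hence $A = \partial N(K) \cap E(V)$ is a Type $4$-$1$ annulus (the Type $4$-$2$ case arises from an analogous re-embedding scenario subsumed in Case A).

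The hard part will be verifying that $N_1 \setminus V$ is a genus two handlebody in the final sub-case. Boundary-irreducibility of $E(V)$ forbids any compressing disk for $\partial V$ from the $E(V)$ side with boundary lying in $P$, while the Step $1$ reduction rules out disks whose boundary is parallel to $\partial A$; hence any compressing disk for $\partial V$ in $V$ must have boundary crossing $\partial A$ transversely. Assembling such ``crossing'' compressions into a spine of $N_1 \setminus V$, and invoking Lemma \ref{lem:essential surfaces in a handlebody} to rule out spurious essential surfaces in the resulting pieces, is the main topological work on which this half of the argument rests.
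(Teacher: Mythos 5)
Your overall frame (dispose of Type $1$ first, then organize around the torus $T=A\cup A'$ and the twice-punctured torus $P$) matches the paper, but the proposal has a genuine gap at the step you yourself flag as ``the hard part,'' and the gap is not one of missing detail: the claim is false. In your Case B with $N_1=E(K)$, $K$ non-trivial, you set out to prove that $N_1\setminus \Int\thinspace V = E(V\cup X)$ is a genus two handlebody. It need not be: this region can contain incompressible tori (e.g.\ knotted solid-torus summands), and that is exactly the situation in which the annulus is of Type $4$-$2$ rather than Type $4$-$1$. The paper handles it by showing $A\cup P$ must compress into $E(X\cup V)$, taking the characteristic compression body $W$ of that region, and re-embedding $X\cup V\cup W$ along the tori of $\partial_- W$ so that the complementary region \emph{becomes} a handlebody, after which Corollary \ref{cor:essential twice-punctuerd tori and Eudve-Munoz knots} applies to the re-embedded picture; this is precisely the content of the Type $4$-$2$ definition. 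Your proposal instead asserts that Type $4$-$2$ is ``subsumed in Case A,'' i.e.\ the boundary-reducible case — but Type $4$ annuli only arise when $E(V)$ is boundary-irreducible (the paper deduces boundary-irreducibility from incompressibility of $P$ in $E(X)$ via Lemma \ref{lem:intersection of an essential disk and an annulus} \emph{before} entering the Type $4$ analysis), so Case A cannot absorb it. Moreover, your proposed mechanism for proving handlebody-ness — assembling compressing disks of $\partial V$ that cross $\partial A$ into a spine — lives on the wrong side of $P$: such disks sit in $V$ and say nothing about the topology of $E(V\cup X)$.

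Two further problems. First, in the same branch you justify incompressibility of $P$ ``on the $V$ side'' by the Step $1$ reduction (neither $a_i$ bounds a disk in $V$); that only excludes compressing disks bounded by $a_1$ or $a_2$, whereas $P$ may compress in $V$ along some other essential curve, and that sub-case yields a Type $3$-$2$ annulus — an outcome your $N_1=E(K)$ branch does not list. The paper's case division (is $P$ compressible in the solid torus $X$ bounded by $A\cup A'$, resp.\ in $E(X)$, and on which side does the compressing disk lie?) is designed exactly to separate these outcomes. Second, Case A is asserted rather than argued: the nontrivial steps there — that compressing $P$ produces an annulus $P'$ such that $A'\cup P'$ bounds a solid torus (the paper's Claim \ref{claim:annulus in a solid torus}), the appeal to \cite{Gor87} to get triviality of the drilled arc for Type $3$-$1$, and the analysis when the core of $X$ is knotted but $P$ still compresses — are where the work is, and none of it is carried out. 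As written, the proposal would not close without importing essentially all of the paper's argument, and its treatment of Type $4$-$2$ would have to be replaced outright.
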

\begin{proof}
If both $a_1$ and $a_2$ bounds a disk in $V$, 
$A$ is a Type 1 annulus by Lemma \ref{lem:characterization of Type 1 and 2}.  
In the following, we assume that both 
$a_1$ and $a_2$ do not bound disks in $V$.  
Let $A' \subset \partial V$ be the annulus with 
$\partial A' = a_1 \sqcup a_2$. 
Then the torus $A \cup A'$ bounds a solid torus $X$ in $S^3$. 
Set $P = \partial V \setminus \Int \thinspace A'$. 

Suppose first that $P$ is contained in $X$. 
Then there is a compressing disk $D$ for $P$. 

Suppose that $\partial D$ is 
non-separating on $P$ and 
let $P' \subset X$ be the annulus obtained by compressing 
$P$ along $D$. 
\begin{claim}
\label{claim:annulus in a solid torus}
$A' \cup P'$ bounds a solid torus in $X$. 
\end{claim} 
\noindent {\it Proof of Claim $\ref{claim:annulus in a solid torus}$.}
When $P'$ is incompressible in $X$, 
$P'$ is parallel to either $A$ or $A'$. 
In each case, it is clear that 
$A' \cup P'$ bounds a solid torus in $X$. 
Suppose that $P$ is compressible in $X$. 
Then by compressing $P'$, 
we obtain two disks $D_1$ and $D_2$ 
bounded by $a_1$ and $a_2$, respectively. 
The disks $D_1$ and $D_2$ cuts off $X$ into two 3-balls $B_1$ and $B_2$ 
such that $A \subset \partial B_1$ and $A' \subset \partial B_2$. 
Now, $P'$ is obtained by tubing $D_1$ and $D_2$ along a simple 
arc $\gamma$ connecting $D_1$ and $D_2$. 
If $\gamma \subset B_1$, we are done 
(see the left-hand side of Figure \ref{non-separating_and_compressing}). 
\begin{figure}[!hbt]
\centering
\includegraphics[width=9cm,clip]{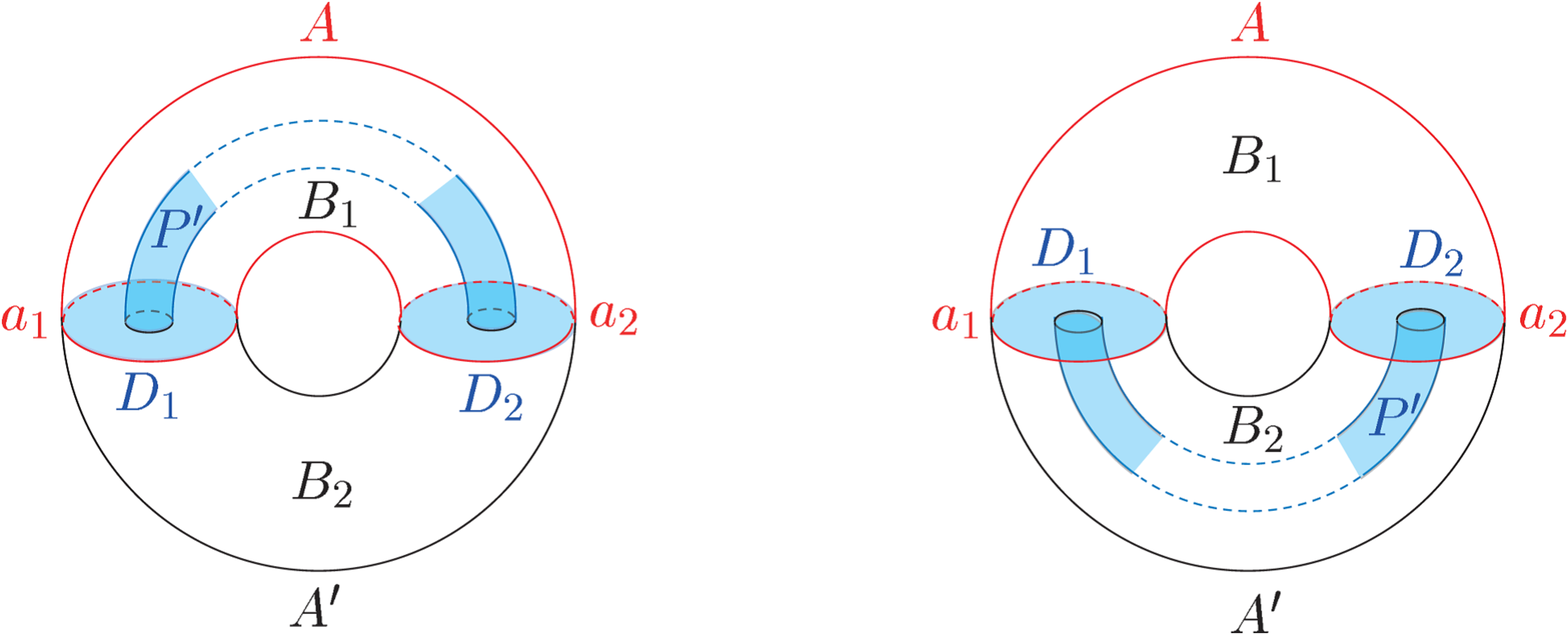}
\caption{}
\label{non-separating_and_compressing}
\end{figure}
Assume that $\gamma \subset B_2$. 
Then 
$A' \cup P'$ is bounding $V'=B_2 \setminus \Int \thinspace \Nbd(\gamma; B_2)$. 
The handlebody $V$ is obtained from $V'$ by attaching a 1-handle or drilling along a 
simple arc 
(see the right-hand side of Figure \ref{non-separating_and_compressing}). 
The latter is impossible since $A$ is incompressible in $X$. 
This implies that $V'$ is also a solid torus, 
whence the claim.  

\vspace{1em}

By Claim \ref{claim:annulus in a solid torus}, 
$A' \cup P'$ bounds a solid torus $X'$ in $X$. 
Since $\partial A' = \partial P' = a_1 \cup a_2$, 
$a_1$ and $a_2$ are parallel non-trivial simple closed curves on 
$\partial X'$. 
If $D \subset E(V)$, then $V$ is obtained from $X'$ 
by drilling $X'$ along a properly embedded simple arc $\alpha$ 
in $X'$ such that $\partial \alpha \cap \partial A = \emptyset$. 
By \cite{Gor87}, $\alpha$ is a trivial arc in $X'$. 
This implies that $A$ is a Type 3-1 annulus. 
If $D \subset V$, then $V$ is obtained from $X'$ 
by adding a regular neighborhood of 
a properly embedded simple arc 
in $E(X \setminus \Int \thinspace X')$. 
This implies $A$ is a Type 3-2 annulus.

Suppose that $\partial D$ is separating on $P$. 
Since $A$ is incompressible in $E(V)$, 
$\partial D$ is parallel to neither $a_1$ nor $a_2$ on 
$P$. Hence $\partial D$ is also separating on $\partial V$.  
By compressing $P$ along $D$ we obtain one annulus $P'_1$ and 
one torus $P'_2$. 
In a similar argument as Claim \ref{claim:annulus in a solid torus}, 
we see that $A' \cup P'_1$ bounds a solid torus 
$X'_1$ in $X$. 
See Figure \ref{separating_and_compressing}.  
\begin{figure}[!hbt]
\centering
\includegraphics[width=9cm,clip]{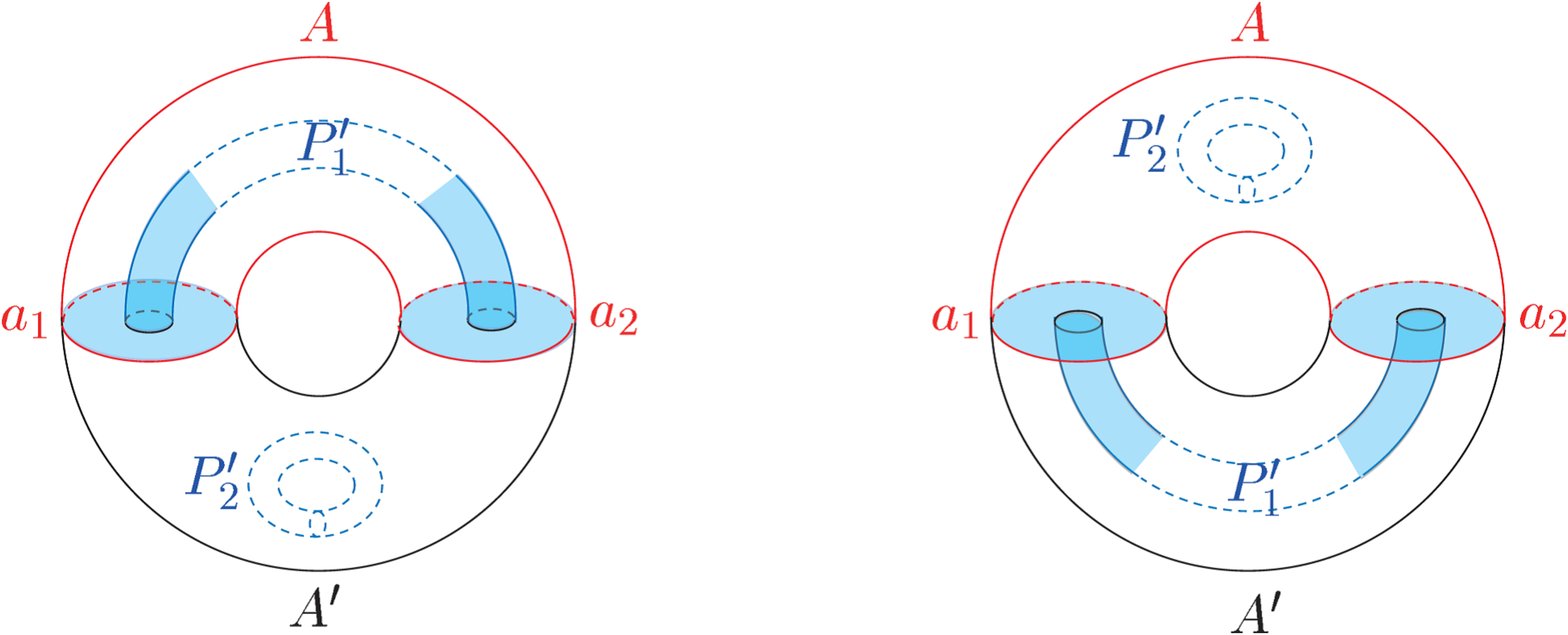}
\caption{}
\label{separating_and_compressing}
\end{figure}
Let $D \subset V$.
Then $P'_2$ bounds 
a solid torus component $X'_2$ 
of $V \setminus \Int \thinspace \Nbd(D; V)$. 
There exists a properly embedded simple arc $\alpha$ 
in $X \setminus \Int \thinspace (X'_1 \cup X'_2)$ connecting $\partial X'_1$ and 
$\partial X'_2$ such that 
$\Nbd (\alpha, X \setminus \Int \thinspace (X'_1 \cup X'_2)) = \Nbd(D; V)$. 
Now using the solid torus $X'_1$ and $\alpha \cup X'_2$ 
it is easy to see that $A$ is a Type 3-2 annulus. 
Let $D \subset E(V)$. 
Then $P'_2$ is contained in $X'_1$. 
When $P'_2$ bounds a solid torus in $X'_1$, then 
we can prove in the same way as above that 
$A$ is a Type 3-1 annulus. 
Suppose $P'_2$ does not bound a solid torus in $X'_1$. 
Then $P'_2$ bounds in $X'_1$ a region $Y$ homeomorphic to 
the exterior of a non-trivial knot in $S^3$. 
Since $\partial D$ is separating on $\partial V$, 
$D \subset E(V)$ is a Type 1 disk. 
If $D$ is a Type 1 disk, both $a_1$ and $a_2$ bound disks in $V$, this contradicts 
the assumption in the beginning of the proof. 
If $D$ is a Type 3 disk, 
one of the components of $E(V) \setminus \Int \thinspace \Nbd (D)
= E(X'_1) \sqcup Y$ must be a solid torus by definition. 
Since $Y$ is not a solid torus, $E(X'_1)$ is a solid torus, i.e. 
$X'_1$ is a standard solid torus in $S^3$. 
Then $A$ is not essential in $E(X'_1)$, so is not in $E(V)$. 
This is a contradiction. 

\vspace{1em} 

Next, suppose that $P$ is contained in $E(X)$. 
Let $K$ be the core of $X$. 
When $K$ is the trivial knot, the above arguments 
immediately implies that $A$ is a Type 3-1 or 3-2 annulus. 
Assume that $K$ is not the trivial knot. 
Since $A$ is essential, $A$ and $A'$ is not parallel in $X$. 
Hence the boundary-slope of $P$ on $\partial X$ is non-integral. 

If $P$ is compressible in $E(X)$, there exists a unique 
annulus component $P'$ of 
the surface obtained by compressing $P$ as in the above argument. 
Since $E(X)$ is boundary-irreducible, $a_1$ and $a_2$ do not bound 
disks in $E(X)$. 
Therefore $P'$ is incompressible. 
Since $A$ determines a cabling annulus of the core of $A'$, 
$P'$ is parallel to $A$ or $A'$. 
The former case is impossible since, if so, 
$A$ is boundary-compressible in $E(V)$.   
In the latter case, applying a similar argument for the 
case of $P \subset X$, we can prove that  
$A$ is a Type 3-1 or 3-2 annulus. 
 
Suppose that $P$ is incompressible in $E(X)$. 
By Lemma \ref{lem:intersection of an essential disk and an annulus}, 
$A' \cup P = \partial V$ is incompressible in $E(V)$, i.e. 
$E(V)$ is boundary-irreducible. 
Similarly, $A \cup P$ is incompressible in $X \cup V$. 
Therefore $A \cup P$ is compressible in 
$V'= E(X \cup V)$. 
It follows that the interior boundary $\partial_- W$ of 
the characteristic compression body $W$ of $V'$ 
is two tori, a single torus or the empty set. 

Assume first that $E(V)$ does not admit an essential torus. 
Then it is clear from the definition that 
$\partial_- W = \emptyset$, i.e. 
$V'$ is also a genus two handlebody. 
Then by Corollary 
\ref{cor:essential twice-punctuerd tori and Eudve-Munoz knots}  
$K$ is an Eudave-Mu\~noz knot. 
This implies that $A$ is a Type 4-1 annulus. 

Finally, assume that $E(V)$ contains essential tori. 
In this case, the interior boundary $\partial_- W$ is 
a single torus or two tori. 
Then, we can re-embed $X \cup V \cup W$ in $S^3$ 
so that $E(X \cup V)$ is a handlebody. 
This implies that $A$ is a Type 4-2 annulus. 
This completes the proof.  
\end{proof}

\noindent {\it Proof of Theorem $\ref{thm:classification of essential annuli}$. } 
Let $A$ be an essential annulus in $E(V)$. 
Then by Lemmas \ref{lem:Classification of Case 1}, 
\ref{lem:Classification of Case 2}, 
\ref{lem:Classification of Case 3} and 
\ref{lem:Classification of Case 4}, 
$A$ belongs to at least one of 
the four Types 1, 2, 3 and 4. 
Moreover, by Lemma \ref{lem:characterization of Type 1 and 2} and 
the definition of Types of annuli in $E(V)$, 
we have the following characterization: 
\begin{enumerate}
\item
If both components of $\partial A$ bound disks in $V$, 
then $A$ is a Type 1 annulus and vice versa. 
\item
If exactly one of the components of $\partial A$ bounds a disk in $V$, 
then $A$ is a Type 2 annulus and vice versa. 
\item
If no component of $\partial A$ bounds a disk in $V$, and 
there exists a compression disk of $\partial V$ in $S^3$ disjoint from $A$, 
then $A$ is a Type 3 annulus and vice versa. 
\item
If no component of $\partial A$ bounds a disk in $V$, and 
there exists no compression disks of $\partial V$ in $S^3$ disjoint from $A$, 
then $A$ is a Type 4 annulus and vice versa. 
\end{enumerate}
The proof is then straightforward. 
\qed

\vspace{1em}

We recall that a handlebody knot $(S^3, V)$ is said to be irreducible  
if it is not 1-decomposable. 
It is equivalent to say that 
$E(V)$ is boundary-irreducible. 
For irreducible genus two handlebody-knots, 
we have a complete classification of 
the essential annuli in their exteriors as follows: 

\begin{corollary}
\label{cor:complete classification of essential annuli}
Let $(S^3, V)$ be an irreducible genus two handlebody-knot. 
Let $A$ be an annulus properly embedded in $E(V)$. 
Then $A$ is essential in $E(V)$ 
if and only if $A$ is a Type $1$, $2$, $3$-$2$, $3$-$3$ or $4$ annulus. 
\end{corollary}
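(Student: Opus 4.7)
The plan is to handle the two implications separately, relying on Theorem \ref{thm:classification of essential annuli} for the forward direction and on Lemma \ref{lem:annulus with essential boundary is esential} for the reverse. The key observation is that among Types $1$, $2$, $3$-$1$, $3$-$2$, $3$-$3$, and $4$, only Type $3$-$1$ forces boundary-reducibility of $E(V)$, and conversely each of the remaining Types produces an annulus with essential boundary on $\partial V$ that is not parallel to $\partial V$.

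For the direction $(\Rightarrow)$, I would begin by invoking Theorem \ref{thm:classification of essential annuli} to conclude that $A$ is of Type $1$, $2$, $3$, or $4$; it then suffices to rule out Type $3$-$1$. If $A$ were of Type $3$-$1$, then by definition $V = X \setminus \Int \Nbd(\alpha)$ for some solid torus $X \subset S^3$ and trivial properly embedded arc $\alpha$ in $X$. Consequently $E(V)$ is obtained from the solid torus $E(X)$ by attaching the $1$-handle $\Nbd(\alpha; X)$ along two disks on $\partial X$. The co-core $D^{*}$ of this $1$-handle is a properly embedded disk in $E(V)$ whose boundary is a meridian of $\Nbd(\alpha; X)$, hence an essential simple closed curve on the annular tube component of $\partial V$; in particular, $\partial D^{*}$ is essential on $\partial V$. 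Thus $D^{*}$ is an essential disk in $E(V)$, contradicting the hypothesis that $(S^3, V)$ is irreducible, which by \cite{Tsu75, BF12} is equivalent to the boundary-irreducibility of $E(V)$.

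For the direction $(\Leftarrow)$, I would take $A$ to be of Type $1$, $2$, $3$-$2$, $3$-$3$, or $4$. The Remark following the Type $4$ definition records that any annulus of one of the four Types satisfies: each component of $\partial A$ is essential on $\partial V$, and $A$ is not parallel to $\partial V$. I then verify the hypotheses of Lemma \ref{lem:annulus with essential boundary is esential} for $M = E(V)$: the exterior $E(V)$ is irreducible because any $2$-sphere in $S^3$ bounds a $3$-ball that cannot contain the genus-two handlebody $V$; it is boundary-irreducible by the irreducibility of $(S^3, V)$ together with \cite{Tsu75, BF12}; and $\partial E(V) = \partial V$ is a closed surface of genus two. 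The lemma then yields that $A$ is essential in $E(V)$.

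The main obstacle lies in the $(\Rightarrow)$ direction, specifically in identifying Type $3$-$1$ as the unique Type to exclude. What distinguishes Type $3$-$1$ from the others is precisely the triviality of the arc $\alpha$ in the ambient solid torus, and this is exactly the feature that manufactures a compressing disk (the co-core $D^{*}$) for $\partial V$ in $E(V)$. The data defining Types $3$-$2$, $3$-$3$, and $4$ involve no such trivial arc, which is consistent with those Types being realized in the exteriors of irreducible handlebody-knots.
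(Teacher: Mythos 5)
Your proposal is correct and follows essentially the same route as the paper: the forward direction combines Theorem \ref{thm:classification of essential annuli} with the observation that the dual (co-core) disk of the drilling arc $\alpha$ in the definition of a Type $3$-$1$ annulus is an essential disk in $E(V)$, contradicting irreducibility; the reverse direction applies Lemma \ref{lem:annulus with essential boundary is esential} via the Remark that annuli of the listed Types have essential boundary and are not boundary-parallel. Your version merely spells out in more detail why $D^{*}$ is essential and why $E(V)$ satisfies the hypotheses of the lemma.
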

\begin{proof}
The ``only if" part follows from Theorem \ref{thm:classification of essential annuli} and the 
definition of Type 3-1 annulus. 
In fact, if $E(V)$ contains an annulus of type 3-1, the dual disk of the drilling arc $\alpha$ in the 
definition of Type 3-1 annulus gives an essential disk in $E(V)$. 

Let $A$ be a Type $1$, $2$, $3$-$2$, $3$-$3$ or $4$ annulus. 
By definition, each component of $A$ is essential on $\partial V$ and 
$A$ is not parallel to the boundary of $E(V)$. 
Hence by Lemma \ref{lem:annulus with essential boundary is esential}, 
$A$ is essential. 
\end{proof}

\section{Classification of the essential M\"obius bands in genus two handlebody-knot exteriors}
\label{sec:Classification of essential Mobius bands in 
the exteriors of genus two handlebodies embedded in the 3-sphere}

The classification of the essential annuli in the 
exteriors of genus two handlebody-knots provided in the 
previous section directly provides 
a classification of the essential M\"obius bands 
in them. 

Let $Y$ be a solid torus embedded in $S^3$. 
Let $K$ be an $(n,2)$-slope on $\partial Y$ with respect to 
the core of $Y$, where $n$ is an odd integer. 
Set $X = \Nbd(K; S^3)$ and $A = \partial Y \setminus \Int \thinspace X$. 
A Type 3-1 (3-2, respectively) essential annulus in 
the exterior of a genus two handlebody-knot is called 
a {\it Type $3$-$1^*$ annulus} (a {\it Type $3$-$2^*$ annulus}, respectively) 
if it is constructed using 
the above $X$ and $A$ in their definitions. 
An annulus $F$ in the exterior of a genus two handlebody-knot 
is said to be a {\it Type $3$* annulus} if it is 
a Type $3$-$1^*$ or $3$-$2^*$ annulus.

Let $(S^3, V)$ be a genus two handlebody-knot. 
An essential M\"obius band $F$ in $E(V)$ is called 
a {\it Type $1$-$1$, $1$-$2$ and $2$ M\"obius band}, 
respectively, if 
the frontier of its regular neighborhood  
is a Type $3$-$1^*$, $3$-$2^*$ and $4$ annulus, respectively. 
An essential M\"obius band $F$ in the exterior of a genus two handlebody-knot 
is said to be a {\it Type $1$ M\"obius band} if it is 
a Type $1$-$1$ or $1$-$2$ M\"obius band.

\begin{example}
The left-hand side of Figure \ref{example_mobius_band_1} shows 
a Type 1-1 essential M\"obius band $F_1$ 
in the exterior of a genus two 
handlebody-knot $V_1$. 
This example is provided in 
\cite{Mot90} to prove that handlebody-knots are 
not determined by their complements. 
The right-hand side of the same figure shows 
a Type 1-1 essential M\"obius band $F_2$ 
in the exterior of a genus two 
handlebody-knot $V_2$. 
\begin{figure}[!hbt]
\centering
\includegraphics[width=10cm,clip]{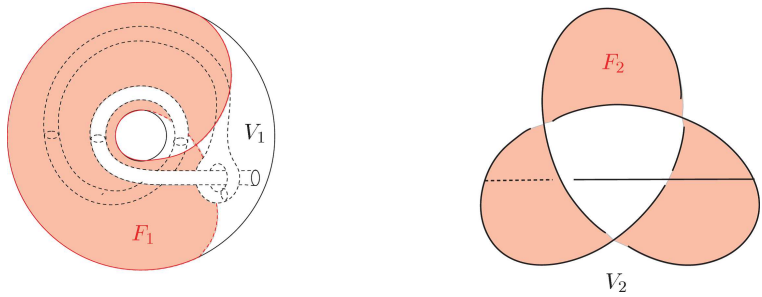}
\caption{Type 1 essential M\"obius bands.}
\label{example_mobius_band_1}
\end{figure}

\end{example}

\begin{theorem}
\label{thm:classification of essential Mobius bands}
Let $(S^3, V)$ be a genus two handlebody-knot. 
Then 
each essential M\"obius band $F$ in the exterior of $V$ 
belongs to exactly one of the above two Types. 
Conversely, each of Type $3^*$ and $4$ essential annuli
in $E(V)$ is the frontier of a regular neighborhood of 
an essential M\"obius band of $E(V)$. 
\end{theorem}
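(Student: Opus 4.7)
The plan is to pass from an essential M\"obius band $F$ in $E(V)$ to its frontier annulus $A = \partial \Nbd(F; E(V)) \setminus \partial V$ and apply the annulus classification of Theorem~\ref{thm:classification of essential annuli}. By the definition of essential non-orientable surface, $A$ is an essential annulus in $E(V)$, and $\partial A$ consists of two parallel copies of $\partial F$ on $\partial V$. The converse direction will then be a direct construction from the Type $3^*$ and Type $4$ definitions.

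First I would show that $\partial F$ is essential on $\partial V$ and does not bound any disk in $V$. Indeed, if $\partial F$ bounded a disk $D$ lying in $\partial V$ or in $V$, then $F \cup D$ would be an embedded closed surface in $S^3$ of Euler characteristic $1$, i.e., an embedded copy of $\mathbb{RP}^2$ in $S^3$, which is impossible since $S^3$ contains no closed non-orientable surface. Hence both components of $\partial A$ are essential on $\partial V$ and do not bound disks in $V$, so $A$ lies in Case $3$ or Case $4$ of the boundary configuration classification. By Lemma~\ref{lem:Classification of Case 3}, a Case $3$ essential annulus is of Type $1$, forcing both boundary curves to bound disks in $V$ --- contradiction. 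Hence $A$ is in Case $4$, and Lemma~\ref{lem:Classification of Case 4} then gives that $A$ is of Type $1$, $3$-$1$, $3$-$2$, or $4$, with Type $1$ again ruled out. So $A$ is of Type $3$-$1$, $3$-$2$, or $4$.

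The key remaining step is to refine Type $3$ to Type $3^*$. Let $X$ be the solid torus provided by the Type $3$ structure. The neighborhood $\Nbd(F)$ is a solid torus whose core is the core circle of $F$, and the core of $A$ on $\partial \Nbd(F)$ wraps twice around this core. Using the Type $3$ data, I would re-embed $X$ (respectively $E(X)$) as a standardly embedded solid torus in $S^3$ so as to place the picture inside a knot exterior containing $F$ as an essential M\"obius band; Lemma~\ref{lem:essential Mobius bands in knot exteriors} then forces the underlying knot to be an $(n,2)$-torus or $(n,2)$-cable knot and $A$ to be its cabling annulus. Translating back, this is precisely the Type $3$-$1^*$ or Type $3$-$2^*$ structure (according to whether $F$ lies outside or inside $X$), so $F$ is a Type $1$ M\"obius band. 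If $A$ is of Type $4$, then $F$ is of Type $2$ by definition. Mutual exclusion of the two types of M\"obius bands follows from that of Types $3$ and $4$ annuli in Theorem~\ref{thm:classification of essential annuli}.

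For the converse: given a Type $3$-$1^*$ annulus $A = \partial Y \setminus \Int X$ with $K \subset \partial Y$ an $(n,2)$-slope on $\partial Y$, the curve $K$ bounds a M\"obius band $F$ inside the solid torus $Y$ whose frontier in $E(V)$ is exactly $A$; the Type $3$-$2^*$ case is analogous. For a Type $4$ annulus arising from an Eudave-Mu\~noz knot $K$, the toroidal slope $r$ is half-integral by the explicit formula recalled in Section~\ref{sec:Classification of essential annuli in the exteriors of genus two handlebodies embedded in the 3-sphere}, hence is a $(p,2)$-cable slope on $\partial \Nbd(K)$ for some odd $p$; this slope bounds a M\"obius band $F$ inside $\Nbd(K)$ whose frontier in $E(V)$ is precisely the given $A$. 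The main obstacle is the Type $3 \to$ Type $3^*$ refinement above: verifying that a Type $3$ annulus arising as a M\"obius band frontier automatically satisfies the $(n,2)$-slope condition, which requires the re-embedding reduction to the knot-exterior case together with Lemma~\ref{lem:essential Mobius bands in knot exteriors}.
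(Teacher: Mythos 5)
Your overall skeleton is the same as the paper's: pass to the frontier annulus $A$ of $\Nbd(F)$, feed it into the Case/Type machinery of Section 2 to land in Type $3$-$1$, $3$-$2$ or $4$, and prove the converse by exhibiting the M\"obius band inside the solid torus bounded by $A$ and the annulus $A'\subset\partial V$, using the half-integrality of Eudave-Mu\~noz toroidal slopes in the Type $4$ case. Your preliminary observation that $\partial F$ cannot bound a disk in $V$ (no embedded $\mathbb{RP}^2$ in $S^3$) is a clean way to exclude Types $1$ and $2$; the paper instead excludes Type $1$ by noting that $A\cup A'$ bounds a solid torus.

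The divergence is at exactly the step you flag as the ``main obstacle,'' the refinement from Type $3$ to Type $3^*$, and there your proposed route is both more complicated than necessary and incompletely justified. You want to re-embed $X$ (or $E(X)$) so as to view $F$ as an essential M\"obius band in a knot exterior and then invoke Lemma \ref{lem:essential Mobius bands in knot exteriors}. But that lemma requires $F$ to be \emph{essential in $E(K)$} for the relevant knot $K$, and essentiality of $F$ in $E(V)$ does not transfer for free: for Type $3$-$1$ one has $E(X)\subsetneq E(V)$ and one must rule out boundary-parallelism of the frontier annulus into $\partial X$ across the region where the tunnel $\alpha$ was drilled; for Type $3$-$2$ one has $E(V)\subsetneq E(X)$ and incompressibility in $E(V)$ does not a priori give incompressibility in the larger manifold $E(X)$. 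None of this is addressed. The paper avoids the issue entirely with an elementary observation: $\Nbd(F)$ is a solid torus, $\partial A$ consists of two parallel copies of $\partial F$, and the boundary of a M\"obius band is automatically an $(n,2)$-curve, $n$ odd, on the boundary of its regular neighborhood. So the $(n,2)$-slope condition in the definition of Type $3^*$ holds for free, and one only has to match the compressing-disk data of the Type $3$ structure (the annulus $P'$ obtained by compressing $\partial V\setminus \Int\thinspace A'$) against the Type $3^*$ template. I would replace your re-embedding argument with this direct one; as written, the crucial step of your forward direction is not complete.
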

\begin{proof}
Let $A$ be the frontier of a 
regular neighborhood an essential M\"obius band $F$ in $E(V)$. 
Then $A$ satisfies the following:
\begin{enumerate}
\item
$\partial A$ cuts off an annulus $A'$ from $\partial V$; 
\item
$A \cup A'$ bounds a solid torus $Y$ in $E(V)$; 
\item
$\partial A$ is a $(n,2)$-slope with respect to the core of $Y$, where 
$n$ is an odd integer. 
\end{enumerate}
By (1) and Lemmas \ref{lem:Classification of Case 3} and 
\ref{lem:Classification of Case 4}, 
$A$ is a Type 1, 3-1, 3-2 or 4 annulus. 
By (2), Type 1 is impossible. 
Let $A$ be a Type 3-1 or Type 3-2 annulus. 
Then by definition 
there exists a compressing disk $D \subset E(Y)$ for 
$P = \partial V \setminus \Int \thinspace A'$ such that 
$\partial D$ is non-separating on $\partial V$. 
Let $P'$ be the surface 
obtained by compressing $P$ along $D$. 
By (3), the boundary-slope of the 
annulus $P'$ is a $(n,2)$-slope with respect to the core of $Y$. 
This implies that $A$ is a Type $3^*$ annulus. 
Since no essential annulus in $E(V)$ can be both Type 3 and 4 
by Theorem \ref{thm:classification of essential annuli}, 
no essential M\"obius band  in $E(V)$ can be both Type 1 and 2.  

For the other direction, let $A \subset E(V)$ be 
an essential annulus of Type $3'$ or Type $4$. 
Then $\partial A$ cuts off an annulus $A'$ from $\partial V$ 
and $A \cup A'$ bounds a solid torus $Y$ in $E(V)$. 
If $A$ is a Type $3^*$ annulus, by definition, 
$\partial A \subset \partial Y$ consists of $(n,2)$-slopes 
with respect to the core of $X$, where $n$ is an odd integer. 
If $A$ is a Type $4$ annulus, 
by Theorem 
\ref{thm:classification of hyperbolic knots admitting a non-integral toroidal surgery} 
and the definition of Type 4, 
$\partial A \subset \partial X$ also consists of $(n,2)$-slopes 
with respect to the core of $X$, where $n$ is an odd integer. 
Hence in both cases, there exists a M\"{o}bius band 
$F$ properly embedded in $X$ such that 
$\partial F$ is the core of the annulus $A'$. 
Since the frontier of $F$ is 
isotopic to $A$ in $E(V)$, $F$ is essential in $E(V)$. 
This completes the proof. 
\end{proof}

As a direct corollary of 
Theorem \ref{thm:classification of essential Mobius bands}, 
we have the following: 
\begin{corollary}
\label{cor:classification of essential Mobius bands}
Let $(S^3, V)$ be a genus two handlebody-knot. 
Then there exists a one-to-one correspondence between 
the set of isotopy classes of essential M\"obius bands 
in $E(V)$ and 
the set of isotopy classes of 
Type $3^*$ or $4$ essential annuli in $E(V)$. 
\end{corollary}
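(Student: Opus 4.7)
The plan is to extract mutually inverse bijections on isotopy classes from the two halves of Theorem \ref{thm:classification of essential Mobius bands}. Define $\Phi$ sending the isotopy class of an essential M\"obius band $F \subset E(V)$ to that of the frontier of $\Nbd(F; E(V))$; by the first half of Theorem \ref{thm:classification of essential Mobius bands} this frontier is a Type $3^{*}$ or $4$ essential annulus. Define $\Psi$ sending a Type $3^{*}$ or $4$ essential annulus $A$ to an essential M\"obius band $\Psi(A)$ as in the second half of the proof of that theorem: the components of $\partial A$ cobound an annulus $A' \subset \partial V$, $A \cup A'$ bounds a solid torus $Y \subset E(V)$, $\partial A$ has $(n,2)$-slope on $\partial Y$ for some odd integer $n$, and $\Psi(A) \subset Y$ is the M\"obius band whose boundary is the core of $A'$. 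Both maps descend to isotopy classes, being natural under ambient isotopy of $E(V)$.

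For $\Phi \circ \Psi \simeq \mathrm{id}$, the frontier annulus of $\Nbd(\Psi(A); Y)$ is an essential annulus properly embedded in $Y$ whose boundary has the same $(n,2)$-slope on $\partial Y$ as $\partial A$, and the closure of $Y \setminus \Int \Nbd(\Psi(A); Y)$ is a product region providing an ambient isotopy in $Y$ carrying this frontier to $A$. For $\Psi \circ \Phi \simeq \mathrm{id}$, let $F$ be given, set $A = \Phi(F)$ and let $Y$ be the solid torus bounded by $A \cup A'$; then both $F$ and $\Psi(A)$ lie in $Y$ with boundaries of the same $(n,2)$-slope on $\partial Y$. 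I would then invoke the following uniqueness: any two essential M\"obius bands in a solid torus whose boundaries have the same $(n,2)$-slope on the boundary torus are ambient isotopic in $Y$. This I would prove by comparing the frontiers of their regular neighbourhoods, which are essential annuli in $Y$ with parallel boundary slopes and hence isotopic by the classical classification of essential annuli in a solid torus; their twisted $I$-bundle neighbourhoods then become ambient isotopic in $Y$, taking one core M\"obius band to the other.

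The main obstacle is this last uniqueness step. It is standard in spirit, but it requires one to arrange the isotopy between the frontier annuli to be the identity on $\partial Y$, so that extending by the identity across $E(V) \setminus Y$ produces an ambient isotopy of $E(V)$. Once this is in place, together with the analogous preliminary adjustment of $\partial F$ on $A'$, one obtains the desired isotopy between $F$ and $\Psi(\Phi(F))$ in $E(V)$, completing the bijection.
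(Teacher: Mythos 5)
Your proposal is correct and follows exactly the route the paper intends: the paper offers no separate argument, declaring the corollary a ``direct'' consequence of Theorem \ref{thm:classification of essential Mobius bands}, with the correspondence given by $F \mapsto$ frontier of $\Nbd(F)$ in one direction and by the solid-torus construction from the second half of that theorem's proof in the other. The uniqueness step you isolate (an essential M\"obius band in a solid torus is determined up to isotopy by its boundary slope, via the boundary-parallelism of its frontier annulus) is the standard fact the paper is implicitly using, so your write-up simply makes explicit what the paper leaves unsaid.
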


As for essential annuli, 
we have a complete classification of 
the essential M\"obius bands in the exteriors 
of irreducible genus two handlebody-knots. 
\begin{corollary}
\label{cor:complete classification of essential Mobius bands}
Let $(S^3, V)$ be an irreducible genus two handlebody-knot. 
Let $F$ be a M\"obius band properly embedded in $E(V)$. 
Then $F$ is essential in $E(V)$ 
if and only if $F$ is a Type $1$-$2$ or $2$ M\"obius band. 
\end{corollary}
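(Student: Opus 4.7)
The plan is to deduce this corollary directly from Theorem \ref{thm:classification of essential Mobius bands} together with the complete annulus classification for irreducible handlebody-knots in Corollary \ref{cor:complete classification of essential annuli}. The ``if'' direction will be immediate, since Type $1$-$2$ and Type $2$ M\"obius bands are by definition essential M\"obius bands with a specified frontier annulus type; essentiality is built into the defining clauses of these types.

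For the ``only if'' direction, I will let $F \subset E(V)$ be an essential M\"obius band and set $A$ to be the frontier of a regular neighborhood of $F$ in $E(V)$. By the definition of essentiality for a non-orientable surface, $A$ is automatically an essential annulus, and by Theorem \ref{thm:classification of essential Mobius bands} the band $F$ must be of Type $1$-$1$, $1$-$2$, or $2$, with $A$ correspondingly a Type $3$-$1^*$, Type $3$-$2^*$, or Type $4$ essential annulus. The task therefore reduces to ruling out the case where $F$ is of Type $1$-$1$.

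This is precisely the point where the irreducibility hypothesis is used. Every Type $3$-$1^*$ annulus is, by construction, a Type $3$-$1$ annulus, and Corollary \ref{cor:complete classification of essential annuli} asserts that in an irreducible genus two handlebody-knot exterior no Type $3$-$1$ annulus is essential: the dual disk of the drilling arc $\alpha$ appearing in the definition of a Type $3$-$1$ annulus produces an essential disk in $E(V)$, contradicting $\partial$-irreducibility. Hence $A$ cannot be of Type $3$-$1^*$, which forces $F$ to be of Type $1$-$2$ or Type $2$. I do not expect a genuine obstacle here; the only point worth double-checking is that the correspondence in Theorem \ref{thm:classification of essential Mobius bands} really does identify the frontier of a Type $1$-$1$ M\"obius band neighborhood with a Type $3$-$1^*$ (hence Type $3$-$1$) annulus, which is a matter of comparing the two defining constructions.
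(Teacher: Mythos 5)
Your proposal is correct and follows essentially the same route as the paper, which deduces the corollary immediately from Theorem \ref{thm:classification of essential Mobius bands} together with Corollary \ref{cor:complete classification of essential annuli}; you have merely made explicit the step of excluding Type $1$-$1$ via the non-essentiality (indeed non-existence) of Type $3$-$1$ annuli in an irreducible handlebody-knot exterior.
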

\begin{proof}
This follows immediately from 
Corollary \ref{cor:complete classification of essential annuli} and 
Theorem \ref{thm:classification of essential Mobius bands}. 
\end{proof}

\section{Classification of the essential tori in genus two handlebody-knot exteriors}
\label{sec:Classification of essential tori in 
the exteriors of genus two handlebodies embedded in the 3-sphere}

Let $(S^3, V)$ be a handlebody-knot 
and let $T$ be a torus properly embedded in $E(V)$. 
A peripherally compressing annulus $A$ for $T$ in $E(V)$ 
is called, in particular, a {\it meridionally compressing annulus} 
if $A \cap \partial V$ bounds an essential disk in $V$. 
We say that $T$ is {\it meridionally compressible} 
if it admits a meridionally compressing annulus. 
Otherwise, $T$ is said to be {\it meridionally incompressible}. 

\begin{theorem}
\label{thm:classification of essential tori}
Let $(S^3, V)$ be a genus two handlebody-knot. 
Let $T$ be an essential torus in $E(V)$. 
Then the following holds: 
\begin{enumerate}
\item
If $T$ is meridionally compressible, 
then there exists a Type $1$ essential annulus 
$A$ in $E(V)$ such that 
$\partial A$ cuts off from $\partial V$ an annulus $A'$ 
so that $A \cup A'$ is isotopic to $T$. 
\item
If $T$ is not meridionally compressible but peripherally compressible, 
then there exists a Type $3$-$1$ or $3$-$2$ essential annulus 
$A$ in $E(V)$ such that 
$\partial A$ cuts off from $\partial V$ an annulus $A'$ 
so that $A \cup A'$ is isotopic to $T$. 
\item
If $T$ is peripherally incompressible, 
then there exists a handlebody-knot $(S^3, V')$ and 
a solid torus $X$ in $E(V')$ 
such that 
$E(V' \cup X)$ does not contain an essential annulus $A$ 
with $A \cap \partial V' \neq \emptyset$ and $A \cap \partial X \neq \emptyset$, 
and that there exists a re-embedding $h: E(X) \to S^3$ so that 
$h(V') = V$ and $h(\partial E(V)) = T$. 	
\end{enumerate}
\end{theorem}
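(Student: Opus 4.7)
The plan is to address the trichotomy case by case. In (1) and (2), the desired annulus $A$ will be produced by peripherally compressing $T$ along a suitably chosen annulus $A_0$ (meridionally compressing in (1), any peripherally compressing annulus in (2)), and the type of $A$ will then be read off from the classification of essential annuli in Section \ref{sec:Classification of essential annuli in the exteriors of genus two handlebodies embedded in the 3-sphere}. In (3) the re-embedding $h$ will come from re-embedding the solid torus bounded by $T$ on the $V$-side into $S^3$ with unknotted image. The core technical point underlying (1) and (2) is a ``product region'' claim: after peripheral compression, the annulus $A$, capped off by the sub-annulus $A'$ of $\partial V$ bounded by $\partial A$, is isotopic to $T$ in $E(V)$. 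I expect verifying this isotopy to be the main obstacle, since it requires identifying the region between $T$ and $A \cup A'$ in $E(V)$ as a product $(\mathrm{annulus}) \times I$, using that the two parallel copies of $A_0$ produced by the compression and the intervening sub-annulus of $\partial V$ bound a product region.

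For (1), I would take a meridionally compressing annulus $A_0$ and let $D \subset V$ be the essential disk bounded by $A_0 \cap \partial V$. Peripheral compression of $T$ along $A_0$ produces an annulus $A$ whose two boundary components are parallel copies of $\partial D$ on $\partial V$, so both bound disks in $V$. Granting the product-region claim, $A \cup A'$ is isotopic to $T$, so essentiality of $T$ transfers to $A$. Lemma \ref{lem:characterization of Type 1 and 2}(1) then identifies $A$ as a Type 1 annulus.

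For (2), pick any peripherally compressing annulus $A_0$ for $T$ and form $A$ as above; it is again essential by the product-region claim. Since $T$ is not meridionally compressible, $A_0 \cap \partial V$ does not bound a disk in $V$, and therefore neither component of $\partial A$ does; Lemma \ref{lem:characterization of Type 1 and 2} thus rules out Types 1 and 2. The components of $\partial A$ are two parallel curves on $\partial V$, so we are in Case 3 or Case 4 of the boundary classification. Case 3 is impossible: by Lemma \ref{lem:Classification of Case 3} it would force $A$ to be Type 1, contradicting what has just been shown. Hence we are in Case 4, and Lemma \ref{lem:Classification of Case 4} narrows $A$ to Type 1, 3-1, 3-2, or 4, with Type 1 already excluded. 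To rule out Type 4, I will observe that in each sub-case of the Type 4 definition the torus $A \cup A'$ is (the image under a re-embedding of) $\partial \Nbd(K)$ for a knot $K$ lying in the solid torus it bounds; the meridian disk of this $\Nbd(K)$ lies in $E(V)$ and compresses $A \cup A'$, contradicting essentiality of $T \simeq A \cup A'$. Hence $A$ is Type 3-1 or 3-2.

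For (3), $T$ separates $S^3$ into two submanifolds with torus boundary, at least one of which is a solid torus. If the side not containing $V$ were a solid torus, its meridian disk would lie in $E(V)$ and compress $T$ there, contradicting essentiality of $T$; so the solid torus $X$ bounded by $T$ contains $V$, while $M := S^3 \setminus \Int X$ is a knot exterior $E(K)$. I will then choose a re-embedding $\iota : X \hookrightarrow S^3$ with unknotted image, set $V' := \iota(V)$, $X' := S^3 \setminus \Int \iota(X)$, and let $h := \iota^{-1} : E(X') \to S^3$. These choices give $X' \subset E(V')$, $h(V') = V$, and $h(\partial X') = T$. For the non-existence condition, suppose $B \subset E(V' \cup X')$ is an essential annulus with $\partial B$ meeting both $\partial V'$ and $\partial X'$; essentiality of $B$ forces each component of $\partial B$ to be essential on the surface containing it, so $h(B) \subset E(V) \cap X$ is an annulus with one essential boundary curve on $\partial V$ and one on $T$. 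This is precisely a peripherally compressing annulus for $T$ in $E(V)$, contradicting peripheral incompressibility.
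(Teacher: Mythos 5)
Your proposal is correct and follows essentially the same route as the paper: peripherally compress $T$ along a (meridionally) compressing annulus, observe that the result capped off with the sub-annulus $A'$ of $\partial V$ is ambient isotopic to $T$, read off the Type from Lemma \ref{lem:characterization of Type 1 and 2} and Lemmas \ref{lem:Classification of Case 3}--\ref{lem:Classification of Case 4} (excluding Type 4 exactly as you do, since $A\cup A'$ would then bound a solid torus in $E(V)$), and handle the peripherally incompressible case by re-embedding the solid torus bounded by $T$ that contains $V$. The ``product region'' point you flag as the main obstacle is precisely what the paper also relies on (stated there without elaboration), so there is no substantive divergence.
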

\begin{proof}
Let $Y \subset S^3$ be the solid torus bounded by $T$. 
Since $T$ is incompressible in $E(V)$, $Y$ contains $V$. 

Assume that there exists a peripherally compressing annulus 
$\hat{A}$ for $T$. 
Let $A \subset E(V)$ be the annulus obtained by peripherally 
compressing $T$ along $\hat{A}$. 
Since $T$ is essential, it follows 
from a similar argument of 
Claim \ref{claim:there is no separating loop} 
in the proof of Lemma 
\ref{lem:satellite knots admitting 2-pounc. tori with non-integral boundary slope} 
that $A$ is also essential. 
Let $A'$ be the annulus component of $\partial V$ cut off by $\partial A$. 
We note that $T$ is ambient isotopic to $A \cup A'$. 
Then it is immediate from Lemma 
\ref{lem:characterization of Type 1 and 2} that 
$\hat{A}$ is a meridionally compressing annulus if and only if 
$A$ is a Type 1 annulus. 
If $\hat{A}$ is not a meridionally compressing annulus, 
by Lemmas \ref{lem:Classification of Case 3} and 
\ref{lem:Classification of Case 4}, 
$A$ is a Type 3-1, 3-2 or 4 annulus. 
However, Type 4 is impossible, since, if so, 
$A \cup A'$ bounds a solid torus in $E(V)$, which implies 
that $T$ is compressible. 

Next, assume that $T$ is peripherally incompressible. 
Then $Y \setminus \Int \thinspace V$ 
does not contain an essential annulus $A$ 
with $A \cap \partial V \neq \emptyset$ and 
$A \cap \partial Y \neq \emptyset$. 
We can re-embed $Y$ into $S^3$ by 
a map $i$ so that $X = E(i(Y))$ is a solid torus. 
The assertion is now easily seen 
by settting $V' = i(V)$. 
\end{proof}

\appendix 
\section*{Appendix A (by Cameron Gordon)}


\begin{proof}[Proof of Lemma $\ref{lem:4-punctured spheres}$]
Let $K$ be a knot in $S^3$, with exterior $E(K)$.
The lemma is clearly true if $K$ is trivial, so assume that $K$ is non-trivial. 
Let $P$ be a properly embedded incompressible 
4-punctured sphere in $E(K)$ with integral boundary slope $\alpha$. 

We will assume familiarity with the terminology of labeled fat vertex intersection graphs, as 
described for example in \cite{GL89}.

Let $E(K)(\alpha) = E(K) \cup V_\alpha$ be the closed manifold obtained by $\alpha$-Dehn filling
on $E(K)$, where $V_\alpha$ is the filling solid torus.
We may cap off the components of $\partial P$ with meridian disks $v_1,v_2,v_3,v_4$ of 
$V_\alpha$ (numbered in order along $V_\alpha$) to get a 2-sphere $\widehat P \subset E(K)(\alpha)$.
By \cite{Gab87}, if we put $K$ in thin position then there is a level 2-sphere $\widehat Q\subset S^3$, 
with corresponding meridional planar surface $Q = \widehat Q \cap E(K) \subset E(K)$, such 
that the intersection graphs $\Gamma_P$ and $\Gamma_Q$ in $\widehat P$ and $\widehat Q$ 
respectively, defined in the usual way by the arc components of $P\cap Q$, have no monogon faces.
Note that $v_1,v_2,v_3,v_4$ are the (fat) vertices of $\Gamma_P$.

Since $H_1 (S^3) = 0$, $\Gamma_P$ does not represent 
all types \cite{Par90}, and hence by \cite{GL89} 
$\Gamma_Q$ contains a Scharlemann cycle $\sigma$. 
Let $f$ be the disk face of $\Gamma_Q$ bounded by $\sigma$ and let $k\ge 2$ be the number
of edges in $\sigma$.
Since $P$ is incompressible we can assume by standard arguments that 
$(\Int \thinspace f)\cap P = \emptyset$.
Without loss of generality $\sigma$ is a (12)-Scharlemann cycle.
The edges of $\sigma$ give rise to $k$ corresponding ``dual'' edges of $\Gamma_P$, joining  
vertices~$v_1$ and $v_2$. 
They thus divide $\widehat P$ into $k$ segments.
\renewcommand{\qed}{}
\end{proof}

\begin{claim}\label{clm1}
Vertices $v_3$ and $v_4$ of $\Gamma_P$ lie in the same segment.
\end{claim}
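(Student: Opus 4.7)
The strategy is to use the Scharlemann disk $f$ to build a properly embedded M\"obius band $M$ in the filled manifold $E(K)(\alpha)$ whose boundary lies in a controlled sub-annulus of $\partial V_\alpha$, and then to exploit the position of $M$ inside $V_\alpha$ to force $v_3$ and $v_4$ into a common segment of $\widehat P$.

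First I would unpack the Scharlemann data. Because $\sigma$ is a $(12)$-Scharlemann cycle, $k$ is even and the corner labels of $\partial f$ alternate $1,2,1,2,\ldots$; because the slope $\alpha$ is integral, each vertex disk $u_i$ of $\Gamma_Q$ meets each $\partial v_j$ in exactly one point, and the four intersection points sit on $\partial u_i$ in the same cyclic order $1,2,3,4$ as the $v_j$'s along $V_\alpha$. Consequently each corner arc $c_j$ of $\partial f$ is a short arc on some $\partial u_{i_j}$ lying between the two adjacent intersection points labeled $1$ and $2$; after a small isotopy of $\widehat Q$ supported near $\partial E(K)$, the $c_j$'s can be taken to lie in the ``short'' annulus $A_0\subset \partial V_\alpha$ cobounded by $\partial v_1$ and $\partial v_2$ that is disjoint from $\partial v_3\cup\partial v_4$.

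Next I would attach to $f$ thin chord bands in $v_1$ and $v_2$ absorbing each corner, and close up through narrow rectangles in $A_0$, obtaining a properly embedded surface $M\subset B_0\cup N(f)$, where $B_0\subset V_\alpha$ is the $3$-ball cobounded by $v_1\cup v_2\cup A_0$ (which, by the cyclic order $v_1,v_2,v_3,v_4$, is disjoint from $v_3\cup v_4$). The standard Scharlemann argument, driven by the $(12)$-alternation of labels and the integrality of $\alpha$, shows that $M$ is non-orientable, hence a M\"obius band, with $\partial M$ a single simple closed curve in $A_0$. Finally, since $M$ is disjoint from $v_3$ and $v_4$ and, after a transversality perturbation, meets $\widehat P$ in the edges $e_1,\ldots,e_k$ together with arcs inside $v_1,v_2$, the disks $v_3$ and $v_4$ must both lie in the unique component of $\widehat P\setminus(v_1\cup v_2\cup\bigcup e_j)$ on the ``far side'' of the edge bundle from $A_0$; by the cyclic arrangement of the $e_j$'s around $\partial v_1$ (and $\partial v_2$) determined by the Scharlemann face $f$, this far side consists of a single segment, yielding the claim.

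The main obstacle is verifying rigorously that the ``far side'' of the edge bundle from $A_0$ is a single segment of $\widehat P$, not a union of several. This reduces to showing that the $k$ endpoints of $e_1,\ldots,e_k$ along $\partial v_1$ (respectively $\partial v_2$) accumulate on the $A_0$-facing side of $\partial v_1$, so that $k-1$ of the $k$ arcs into which they cut $\partial v_1$ lie in $A_0$'s side and only one arc bounds the unique ``far'' segment where $v_3,v_4$ must sit; this in turn follows from tracking how the cyclic ordering of edges around $\partial f$ in $\widehat Q$ matches the cyclic ordering of the $e_j$'s around $\partial v_1$ in $\widehat P$.
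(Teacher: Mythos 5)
There is a genuine gap here, and in fact the proposal heads in a direction that cannot close. Two specific problems. First, a $(12)$-Scharlemann cycle need \emph{not} have an even number of edges: the face $f$ of a length-$k$ Scharlemann cycle wraps $k$ times around the $1$-handle $H_{12}$, and $N(v_1\cup H_{12}\cup v_2\cup f)$ is a once-punctured lens space of order $k$ (exactly as used later in the appendix); for $k\ge 3$ there is no properly embedded M\"obius band to be extracted from $f$ by banding in $v_1$ and $v_2$, so the central object of your construction does not exist in general. Second, and more seriously, even if such a surface $M$ existed, your final step is circular: $M$ meets $\widehat P$ only in the $k$ dual edges together with arcs inside $v_1\cup v_2$, so the statement ``$v_3$ and $v_4$ are disjoint from $M$'' carries no information about which of the $k$ segments of $\widehat P\setminus(v_1\cup v_2\cup\bigcup e_j)$ they occupy --- every vertex other than $v_1,v_2$ is automatically disjoint from that set. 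Your assertion that ``the far side of the edge bundle from $A_0$ is a single segment'' is essentially the claim itself, and it is false as a statement about the edge configuration alone: the $k$ dual edges cut $\widehat P$ into $k$ segments with no distinguished one, and nothing in the construction prevents $v_3$ and $v_4$ from sitting in two different segments a priori.

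The actual proof is a short count of edge endpoints, with no geometry of $f$ beyond the existence of the $k\ge 2$ dual edges. Each vertex of $\Gamma_P$ has valency $q=\#\partial Q$ (since the boundary slope is integral, each component of $\partial P$ meets each component of $\partial Q$ once). If $v_3$ and $v_4$ lay in different segments, then no edge could join $v_3$ to $v_4$ (they are separated by the dual edges) and no edge is a loop (an innermost loop at $v_3$ would bound a monogon, excluded by thin position), so all $q$ edges at $v_3$ run to $v_1$ or $v_2$, say $a_1$ and $a_2$ of them, and likewise $b_1,b_2$ at $v_4$, with $a_1+a_2=b_1+b_2=q$. Then $a_1+b_1\ge q$ or $a_2+b_2\ge q$, which is impossible because $v_1$ (resp.\ $v_2$) has only $q$ edge endpoints and already spends $k\ge 2$ of them on the dual edges of $\sigma$. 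I would recommend abandoning the M\"obius band construction entirely and arguing by this parity/valency count.
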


\begin{proof}
Let $q$ be the number of components $\partial Q$, which is equal to the valency of the vertices of 
$\Gamma_P$.
Suppose $v_3$ and $v_4$ lie in different segments.
Then of the $q$ edges of $\Gamma_P$ incident to  $v_3$,  $a_1$ go to  $v_1$ 
and $a_2$ to  $v_2$,
where $a_1 + a_2 = q$.
Similarly $b_1$ edges  at  $v_4$ go to  $v_1$ and $b_2$ to  $v_2$, where $b_1+ b_2 = q$.
It follows that either $a_1 + b_1$ or $a_2 +b_2$ is $\ge q$, a contradiction.
\end{proof}

By Claim~\ref{clm1} there is a disk $D\subset \widehat P$ containing fat vertices~$v_1$ and $v_2$ and the $k$ 
edges dual to $\sigma$,  and disjoint from $v_3$ and $v_4$.

Let $H_{12}$ be that part of $V_\alpha$ that runs between fat vertices~$v_1$ and $v_2$ of $\Gamma_P$.
Let $\widehat X$  be a regular neighborhood of $D\cup H_{12} \cup f$, pushed slightly off $\widehat P$.
Then $\widehat X$ is a punctured lens space whose fundamental group has order~$k$.
The 2-sphere $\partial\widehat X$ meets $V_\alpha$ in two meridian disks that are nearby 
parallel copies of  $v_1$ and $v_2$. 
Let $A$ be the annulus $\partial\widehat X\cap E(K)$.
Then $A$ separates $E(K)$ into $X$ and $Y$, say, where $X\subset\widehat X$.
Note that $\widehat X$ is obtained by attaching the 2-handle $H_{12}$ to $X$, and that $P\subset Y$.

\begin{claim}\label{clm2}
$A$ is essential in $E(K)$.
\end{claim}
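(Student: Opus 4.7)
My plan is to show that $A$ is incompressible, boundary-incompressible, and not boundary-parallel in $E(K)$. Since $K$ is non-trivial, $E(K)$ is irreducible and $\partial$-irreducible; since $\alpha$ is integral, $\alpha$ is not the meridional slope, so $\partial A$ (two parallel curves of slope $\alpha$) is essential on $\partial E(K)$.

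Incompressibility of $A$ is immediate: a compressing disk for $A$ would yield, after compression, two disks in $E(K)$ bounded by essential slope-$\alpha$ curves on $\partial E(K)$, contradicting the $\partial$-irreducibility of $E(K)$. For boundary-incompressibility I will invoke the standard fact that, in an irreducible, $\partial$-irreducible compact 3-manifold, an incompressible annulus with essential boundary is either $\partial$-incompressible or $\partial$-parallel. (Briefly: a $\partial$-compressing disk yields a disk $D'$ whose boundary, by $\partial$-irreducibility, must bound a disk in $\partial E(K)$; irreducibility then forces $D'$ to be $\partial$-parallel, which in turn forces $A$ itself to be $\partial$-parallel.) Thus the main task is to rule out $\partial$-parallelism of $A$.

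Suppose for contradiction that $A$ is $\partial$-parallel. Then one of $X$ or $Y$ is a solid torus cobounded by $A$ and an annulus on $\partial E(K)$. If $X$ is the solid torus: on the torus $\partial X = A \cup A'_X$, the core of $A$ is a longitude of $X$, and the core of $T_{12} \subset A'_X$ (the annulus along which $H_{12}$ attaches) is parallel to the core of $A$ in $\partial X$, hence is itself a longitude of $X$. Thus $\widehat X = X \cup H_{12}$ is obtained by attaching the 3-ball $H_{12}$ to the solid torus $X$ as a 2-handle along a longitude, producing a 3-ball; this contradicts $\pi_1(\widehat X) \cong \mathbb{Z}/k$ with $k \geq 2$. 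If $Y$ is the solid torus: then the essential 4-punctured sphere $P \subset Y$ is incompressible in $Y$ and not $\partial$-parallel in $Y$, since $\partial Y \cong T^2$ contains no 4-punctured-sphere subsurface. By Lemma~\ref{lem:essential surfaces in a handlebody}, $P$ cannot be essential in the handlebody $Y$, so $P$ must be $\partial$-compressible in $Y$. Using the product structure $Y \cong A \times I$, any such $\partial$-compressing disk can be isotoped so that its boundary arc lies in $A'_Y \subset \partial E(K)$; this yields a $\partial$-compressing disk for $P$ in $E(K)$ itself, contradicting the essentiality of $P$.

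The main obstacle is the last step: making precise the product-isotopy in $Y \cong A \times I$ that relocates the boundary arc of a $\partial$-compressing disk for $P$ from the $A$ side of $\partial Y$ to the $A'_Y$ side. This is standard for $I$-bundles, but must be carried out carefully to avoid creating new intersections of the disk with $P$.
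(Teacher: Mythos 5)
Your overall structure (incompressibility, then the standard reduction of $\partial$-incompressibility to non-$\partial$-parallelism, then ruling out $\partial$-parallelism via the two cases $X\cong A\times I$ and $Y\cong A\times I$) matches the paper, and your treatment of the first case is correct: if $X$ is the product, then $\widehat X = X\cup H_{12}$ is a $3$-ball, contradicting $\pi_1(\widehat X)\cong \Integer/k$ with $k\ge 2$.

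The second case, however, contains a genuine gap. You end by producing a boundary-compressing disk for $P$ in $E(K)$ and declare this to contradict ``the essentiality of $P$.'' But Lemma \ref{lem:4-punctured spheres} only assumes that $P$ is \emph{incompressible}; $P$ is never assumed to be boundary-incompressible or essential (indeed, in its application in Lemma \ref{lem:Classification of Case 1} the lemma is used precisely to force compressibility, so the weaker hypothesis is the whole point). Hence showing that $P$ is $\partial$-compressible in $E(K)$ contradicts nothing. The correct and much shorter argument is the one the paper uses: if $Y\cong A\times I$ then $Y$ is a solid torus, and a $4$-punctured sphere cannot be incompressible in a solid torus, since $\pi_1(P)$ is free of rank $3$ and cannot inject into $\pi_1(Y)\cong\Integer$; by the loop theorem $P$ is therefore compressible in $Y$, hence in $E(K)$, directly contradicting the incompressibility hypothesis. (Your route could in principle be repaired by invoking the standard fact that an incompressible, $\partial$-compressible surface with non-meridional boundary slope in a knot exterior must be a $\partial$-parallel annulus, which a $4$-punctured sphere is not, but that extra lemma is neither stated nor proved in your argument, and the $\pi_1$ observation makes it unnecessary; it also renders the delicate product-isotopy step you flag as the ``main obstacle'' entirely avoidable.)
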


\begin{proof}
Clearly $A$ is incompressible in $E(K)$.

Suppose $A$ is boundary parallel in $E(K)$.
Then either $X$ or $Y$ is homeomorphic to $A\times I$ (with $A$ corresponding to 
$A\times\{0\}$).
In the first case, $\widehat X$ is homeomorphic to $B^3$, a contradiction. 
In the second case, 
%
%
$P$ compresses in $Y$, and therefore in $E(K)$, again a contradiction.
\end{proof}

Claim~\ref{clm2} implies that $K$ is a cable knot with cabling annulus $A$.
Since $P$ has the same boundary slope as $A$ it is easy to show that this is impossible.\qed

\appendix 
\section*{Appendix B: Decomposition of handlebody-knots by 2-decomposing spheres}
\renewcommand{\thesection}{B}


In this appendix, we provide a unique decomposition theorem of handlebody-knots of 
of arbitrary genus by decomposing spheres, which is a generalization of \cite{IKO12}. 
This is achieved by focusing only on a generalization of Type 1 annuli defined in Section 
\ref{sec:Classification of essential annuli in the exteriors of genus two handlebodies embedded in the 3-sphere} 
for higher genus case.

A $2$-decomposing sphere $S$ in $S^3$ is 
called a {\it knotted handle decomposing sphere} for a
handlebody-knot $(S^3, V)$ if 
$S \cap V$ consists of two parallel essential disks in $V$, and 
$S \cap E(V)$ is an essential annulus in $E(V)$. 

Let $(S^3, V)$ be a handlebody-knot and $S$ be its knotted 
handle decomposing sphere. 
Then $S \cap \partial V$ cuts off an annulus $A$ from 
$\partial V$.  
Let $T$ be an essential torus in $E(V)$ obtained by tubing 
$S \cap E(V)$ along $A$. 
Let $\hat{A}$ be a meridionally compression annulus for $T$. 
Then by annulus-compressing $T$ along $\hat{A}$, we get a new knotted 
handle decomposing sphere $S'$. 
We say that 
{\it $S'$ is obtained from $S$ by an annulus-move along $A$}. 

A set ${S_1, \ldots , S_n}$ of knotted handle decomposing 
spheres for a handlebody-knot $(S^3, V)$ is said to be {\it unnested} 
if each sphere $S_i$ bounds a 3-ball $B_i$ in $S^3$ so that 
$B_i \cap V \cong B^3$ ($1 \leqslant i \leqslant n$) and 
$B_i \cap B_j = \emptyset$ ($1 \leqslant i < j \leqslant n$). 
We remark that 
a maximal unnested set of knottted handle decomposing spheres 
always exists by the Kneser-Haken finiteness theorem 
\cite{Kne29, Hak68}. 
Moreover, the following is proved in \cite{IKO12}. 

\begin{theorem}[\cite{IKO12}]
\label{thm:IKO12}
Let $(S^3, V)$ be a handlebody-knot such that 
$E(V)$ is boundary-irreducible. 
Then $(S^3, V)$ admits a unique maximal unnested 
set of knotted handle decomposing spheres up to
isotopies and annulus-moves.
\end{theorem}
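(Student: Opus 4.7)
The plan is to translate knotted handle decomposing spheres into essential tori in $E(V)$ and reduce the statement to a uniqueness property of these tori. To each knotted handle decomposing sphere $S$ I associate the essential torus $T_S\subset E(V)$ obtained by tubing the annulus $S\cap E(V)$ with the annulus $A_S\subset\partial V$ cut off by $\partial(S\cap E(V))$. By construction $T_S$ is meridionally compressible and $S$ is recovered from $T_S$ by annulus-compression along a suitable meridionally compressing annulus; conversely every essential torus in $E(V)$ equipped with a meridionally compressing annulus yields a knotted handle decomposing sphere. An annulus-move on $S$ is, by definition, the operation of switching the meridionally compressing annulus used for the compression, so ``isotopy $+$ annulus-moves'' on spheres corresponds to ordinary isotopy on the associated essential tori, once the tori are considered independently of any choice of meridionally compressing annulus.

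Existence of a maximal unnested family $\{S_1,\dots,S_n\}$ is immediate from the Kneser--Haken finiteness theorem applied to the pairwise disjoint essential tori $T_{S_1},\dots,T_{S_n}$ in $E(V)$, which is irreducible because $E(V)$ is boundary-irreducible and $S^3$ is irreducible. For uniqueness, given two maximal unnested families $\mathcal{S}$ and $\mathcal{S}'$, I put their associated torus families $\mathcal{T}$ and $\mathcal{T}'$ in general position and minimize the number of intersection components $|\mathcal{T}\cap\mathcal{T}'|$ by isotopy. Standard innermost-disk/innermost-arc arguments in $E(V)$, together with incompressibility and boundary-incompressibility of both families, force this minimum to be zero, so $\mathcal{T}$ and $\mathcal{T}'$ can be made disjoint. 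Because the unnested hypothesis ensures that each $T_{S_i}$ bounds a solid torus in $S^3$ containing exactly one ``handle'' of $V$, a direct parallel-torus analysis (of JSJ flavor) then matches the two families pairwise up to isotopy.

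The final step is to show that two knotted handle decomposing spheres $S,S'$ with isotopic associated tori are related by isotopies and annulus-moves. After an isotopy I may assume $T_S=T_{S'}=T$, so that $S$ and $S'$ arise as annulus-compressions of $T$ along meridionally compressing annuli $\hat A$ and $\hat A'$, respectively. The disks $\hat A\cap V$ and $\hat A'\cap V$ are essential meridian disks in the solid-torus summand of $V$ cut off by $A=T\cap\partial V$; minimizing $|\hat A\cap\hat A'|$ and analyzing the standard combinatorics of essential-disk intersections in a solid torus, I interpolate between $\hat A$ and $\hat A'$ by a finite sequence of meridionally compressing annuli, each differing from the next by a single annulus-move on the corresponding sphere. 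The main obstacle will be this last combinatorial step: verifying that every intermediate annulus is genuinely a meridionally compressing annulus for $T$ and that the intermediate spheres remain knotted handle decomposing spheres rather than degenerating into compressible or boundary-parallel configurations. Boundary-irreducibility of $E(V)$ is used essentially here to rule out the ``bad'' compressions that would break the knotted-handle structure during the interpolation.
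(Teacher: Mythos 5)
Your overall strategy---translate knotted handle decomposing spheres into the meridionally compressible essential tori obtained by tubing along the annuli they cut off from $\partial V$, prove the torus families coincide, and then recover the spheres up to annulus-moves---is exactly the strategy of the paper's Appendix~B (which proves the stronger statement without the boundary-irreducibility hypothesis). The existence part via Kneser--Haken and the final translation back to spheres are fine; indeed, by the definition of annulus-move, two spheres with the same associated torus differ by a \emph{single} annulus-move, so your elaborate interpolation between meridionally compressing annuli in the last paragraph is unnecessary.

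The genuine gap is in your uniqueness step, where you assert that ``standard innermost-disk/innermost-arc arguments'' force $\mathcal{T}$ and $\mathcal{T}'$ to become disjoint. First, the tori are closed, so there are no arcs of intersection and no boundary-incompressibility to exploit; only circles occur. Second, and more seriously, innermost-disk arguments only remove intersection circles that are inessential on one of the two tori. Circles that are essential on both tori cannot be removed this way, and two incompressible tori in an irreducible $3$-manifold can in general intersect essentially (vertical tori in a Seifert fibered piece are the standard example). This is precisely where the paper has to work: in Lemma~\ref{lem:maximal system and another torus} the components of $T\cap Y_i$ are essential annuli in the knot exterior $Y_i$, and Lemma~\ref{lem:BZ85} says each is either a cabling annulus or extends to a decomposing sphere; the latter is excluded because the core $K_i$ of $E(Y_i)$ is a \emph{prime} knot (this uses Schubert's unique factorization together with maximality/unnestedness of the family), and the cabling-annulus case is then killed by an outermost-arc argument on the meridionally compressing annulus $A_i$, which such a cabling annulus must meet. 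None of this machinery---the classification of essential annuli in knot exteriors, the primeness of the companion knots, or the interaction with the compressing annuli---appears in your proposal, and without it the claim that the two torus families can be made disjoint is unjustified. You would need to supply an argument of this kind (or an equivalent JSJ-uniqueness argument applied carefully to the pieces $Y_i$) to close the gap.
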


In the following, we see that we can 
remove from the above theorem 
the assumption that $E(V)$ is boundary-irreducible. 
\begin{theorem}
\label{thm:uniqueness of 2-decompositions}
Every handlebody-knot $(S^3, V)$ admits a unique maximal unnested 
set of knotted handle decomposing spheres up to
isotopies and annulus-moves.
\end{theorem}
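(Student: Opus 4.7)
The plan is to reduce to Theorem \ref{thm:IKO12} by decomposing $V$ along $1$-decomposing spheres when $E(V)$ is boundary-reducible. The argument proceeds by induction on the genus of $V$, with base cases being either $V$ trivial (no knotted handle decomposing sphere exists, so the statement is vacuous) or $V$ having boundary-irreducible exterior, handled directly by Theorem \ref{thm:IKO12}.

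For the inductive step, assume $E(V)$ is boundary-reducible. The first step is to produce a $1$-decomposing sphere $\Sigma$ for $V$, and then, given a maximal unnested system $\mathcal{S} = \{S_1, \ldots, S_n\}$ of knotted handle decomposing spheres, isotope $\Sigma$ so that $\Sigma \cap S_i = \emptyset$ for every $i$ while keeping $\mathcal{S}$ unchanged and unnested. This is carried out by a standard innermost disk / outermost arc argument: in $E(V)$, the disk $\Sigma \cap E(V)$ and each annulus $S_i \cap E(V)$ are essential, so intersections can be removed using incompressibility and boundary-incompressibility, and the remaining intersection curves lying inside the handlebody $V$ are eliminated by innermost-disk swaps within $V$.

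After $\Sigma$ has been disjoined from every $S_i$, cutting $S^3$ along $\Sigma$ decomposes $(S^3, V)$ as a connected sum of handlebody-knots $(S^3, V_1)$ and $(S^3, V_2)$ of strictly smaller total genus, and $\mathcal{S}$ partitions into subsystems $\mathcal{S}_1$ and $\mathcal{S}_2$. A maximality argument shows each $\mathcal{S}_j$ is itself maximal in $V_j$: any additional knotted handle decomposing sphere for $V_j$ would lift, via reversing the connected-sum operation, to one for $V$ disjoint from $\mathcal{S}$, contradicting the maximality of $\mathcal{S}$. Applying the inductive hypothesis to each summand, together with the uniqueness of the prime decomposition of $(S^3, V)$ by $1$-decomposing spheres (itself proved by the same type of disjointing argument, now applied between two $1$-decomposing spheres), yields the desired uniqueness of $\mathcal{S}$ up to isotopy and annulus-moves.

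The main obstacle will be ensuring that annulus-moves interact correctly with this cutting process. Given a meridionally compressing annulus $\hat{A}$ effecting an annulus-move in some summand $V_j$, one must verify that $\hat{A}$ (within $E(V)$) can be isotoped to be disjoint from $\Sigma$, so the move descends from $V$ to $V_j$ and vice versa. This again follows from an innermost-disk / outermost-arc argument applied to $\hat{A}$ against $\Sigma \cap E(V)$, using the essentiality of $\hat{A}$ inherited from the essential torus it peripherally compresses. A secondary technicality, peculiar to the higher-genus setting where \cite{Tsu75}'s direct argument is unavailable, is confirming that boundary-reducibility of $E(V)$ always produces a $1$-decomposing sphere, i.e., an essential disk in $E(V)$ whose boundary bounds a disk in $V$; this is obtained by taking an arbitrary essential disk $D \subset E(V)$ and performing band-sums with a suitable meridian system of $V$ (chosen via the characteristic compression body of $E(V)$) to arrange $\partial D$ to be meridional on $\partial V$.
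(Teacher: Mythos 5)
Your strategy is fundamentally different from the paper's, and it contains a genuine gap at its core: the appeal to ``the uniqueness of the prime decomposition of $(S^3,V)$ by $1$-decomposing spheres.'' No such theorem is proved in this paper, cited from the literature, or derivable by ``the same type of disjointing argument.'' Making two $1$-decomposing spheres disjoint does not by itself give uniqueness of the resulting summands --- compare Schubert's theorem for knots or the prime decomposition of $3$-manifolds, where disjointness is only the first and easiest step --- and for handlebody-knots of genus $\geqslant 2$ unique factorization along $1$-decomposing spheres is itself a delicate question; the paper only records, for genus two, the equivalence between $\partial$-reducibility and $1$-decomposability from \cite{Tsu75, BF12}, not any uniqueness of the factorization. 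Since your entire induction funnels through this unproved uniqueness statement, the argument does not close. There are also secondary unverified claims: that a knotted handle decomposing sphere unnested in a summand lifts to one unnested in $V$ (the ball $B'$ certifying unnestedness in the summand may contain the connect-sum ball carrying the other summand, so $B'\cap V$ need not be a $3$-ball after reassembly), and that annulus-moves pass back and forth across the cut in both directions.

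For contrast, the paper's proof makes no reduction to the boundary-irreducible case at all. Given two maximal unnested systems, each sphere $S_i$ is tubed along the annulus it cuts off from $\partial V$ to produce an essential torus $T_i$ bounding a knot exterior $Y_i$ disjoint from $V$; maximality together with Schubert's theorem forces each core $K_i$ to be prime; Lemma \ref{lem:maximal system spans disjoint knot complements} makes the regions $Y_i$ pairwise disjoint; and Lemma \ref{lem:maximal system and another torus} (an isotopy argument using Lemma \ref{lem:BZ85} and primeness) shows that any other essential meridionally compressible torus can be slid off the whole system, forcing the two torus systems to coincide, whence the two sphere systems differ by at most $n$ annulus-moves. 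This direct argument is precisely what allows the authors to drop the boundary-irreducibility hypothesis of Theorem \ref{thm:IKO12}; to salvage your approach you would first have to supply the unique-factorization theorem you are implicitly assuming.
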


\begin{lemma}
\label{lem:maximal system and another torus}
Let $\{T_1, T_2, \ldots, T_n\}$ be a maximal set of mutually disjoint, 
mutually non-parallel, essential, meridional-compressible tori 
in $E(V)$ satisfying the following: 
\begin{itemize}
\item
For each $i=1,2,\ldots, n$, 
let $Y_i$ be the region in $S^3$ spanned by 
$T_i$ such that $Y_i \cap V = \emptyset$. 
Then $Y_i \cap Y_j = \emptyset$ for 
$1 \leqslant i < j \leqslant n$; and 
\item
The core $K_i$ of $E(Y_i)$ is a prime knot. 
\end{itemize}
Then any essential, meridional-compressible torus $T$ in $E(V)$ 
can be isotoped so that $T \cap T_i = \emptyset$ for all $i$. 
\end{lemma}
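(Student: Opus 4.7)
The plan is to run a standard minimal-intersection argument, converting every nontrivial intersection into either a forced reduction of intersections, a compression contradicting essentialness, or a slope conflict coming from the cabling structure of the $K_i$. First I would isotope $T$ in $E(V)$ so as to minimize the total number $N$ of components of $T\cap(\bigcup_{i=1}^n T_i)$. Suppose for contradiction that $N>0$, and pick $i$ with $T\cap T_i\neq\emptyset$. A standard innermost-disk argument, using the incompressibility of both $T$ and $T_i$ in $E(V)$, shows that every curve of $T\cap T_i$ is essential on both surfaces; in particular all these curves are mutually parallel on each of the two tori, and they cut each torus into annuli.

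Next I would analyze the annular components of $T\cap Y_i$. By an extra minimality argument, no such component can be compressible in $Y_i$ (since an essential circle on an annulus is parallel to the boundary, a compressing disk for the piece would pull back to a compressing disk for $T$ in $E(V)$, contradicting that $T$ is essential) nor boundary-parallel in $Y_i$ (else one could isotope $T$ across the parallelism and reduce $N$). Hence each component of $T\cap Y_i$ is an essential annulus in $Y_i=E(K_i)$. Applying Lemma~\ref{lem:BZ85} together with the hypothesis that $K_i$ is prime (which rules out the case of a decomposing sphere), each such annulus must be a cabling annulus of $K_i$; thus $K_i$ is a torus knot or a cable knot, and the slope of $T\cap T_i$ on $T_i=\partial Y_i$ is the cabling slope of $K_i$.

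The remaining step exploits the meridional compressibility of $T$. Fix a meridionally compressing annulus $B$ for $T$, so that $B\cap T$ is a single essential circle on $T$ and $B\cap\partial V$ bounds an essential disk in $V$. Isotope $B$ to minimize $|B\cap T_i|$. Circles of $B\cap T_i$ are removed by innermost-disk arguments on $B$ using the incompressibility of $T_i$. An outermost-arc disk on $B$ cut off by the remaining arcs of $B\cap T_i$ then yields one of three outcomes: either a move reducing $|T\cap T_i|$ and contradicting the minimality of $N$; or a compressing or $\partial$-compressing disk for $T_i$ (or $T$) contradicting essentialness; or a new peripherally compressing annulus for $T_i$ that, via Lemma~\ref{lem:perhipherally compressing a torus} and the meridional nature of $B\cap\partial V$, forces the slope of $T\cap T_i$ on $T_i$ to be meridional with respect to $K_i$. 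Since the cabling slope of a torus or cable knot is never the meridian, this conflicts with the slope identification of the previous paragraph, completing the contradiction and hence the proof.

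The main obstacle is the final case analysis. One must argue that the meridionality of $B\cap\partial V$, which is expressed in the handlebody $V$, translates along $B$ through its new arcs-on-$T_i$ structure into a meridional curve on $T_i$ with respect to the knot $K_i$; this uses the crucial inclusion $V\subset E(Y_i)=N(K_i)$ to convert a meridian disk of $V$ into an object witnessing the slope on $T_i$. A secondary delicate point is to verify that every outermost-arc reduction is genuine and does not merely shuffle intersections, which requires a careful book-keeping of the innermost-circle and outermost-arc cases on the annulus $B$.
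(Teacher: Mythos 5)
Your first two steps (minimizing $\#\bigl(T\cap\bigcup T_i\bigr)$, showing all intersection curves are essential and mutually parallel on both tori, and identifying each component of $T\cap Y_i$ as a cabling annulus of the prime knot $K_i$ via Lemma~\ref{lem:BZ85}) coincide with the paper's. The gap is in your final step. You never invoke the hypothesis that the tori $T_i$ of the maximal system are \emph{meridionally} compressible, and that hypothesis is exactly what drives the paper's contradiction: a meridionally compressing annulus $A_i$ for $T_i$, capped off with the essential disk of $V$ bounded by $A_i\cap\partial V$, is an essential disk in the solid torus $E(Y_i)\supset V$, so $A_i\cap T_i$ is the meridian of $K_i$. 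Since the curves $T\cap T_i$ carry the (non-meridional) cabling slope, they must meet $A_i\cap T_i$, so $A_i\cap T$ contains arcs; an outermost such arc on $A_i$ cuts off a disk $\delta$ that boundary-compresses an annulus component $A'$ of $T\cap E(Y_i)$ into a disk with boundary on $T_i$, which (by incompressibility of $T_i$ and irreducibility of the solid torus $E(Y_i)$) shows $A'$ is parallel into $T_i$, contradicting minimality.

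Your substitute --- a meridionally compressing annulus $B$ for $T$ itself --- does not obviously supply the needed arcs: $B$ runs from $\partial V$ to a single circle $c=B\cap T$, and nothing in your set-up prevents $B$ (hence $c$) from being isotoped entirely into the solid-torus side $E(Y_i)$, in which case $B\cap T_i=\emptyset$ and your outermost-arc trichotomy has nothing to act on. Moreover, your third outcome --- that an outermost disk on $B$ yields a peripherally compressing annulus for $T_i$ with \emph{meridional} slope on $T_i$, via ``converting a meridian disk of $V$ into an object witnessing the slope on $T_i$'' --- is unjustified: an essential disk of the handlebody $V$ bears no canonical relation to the meridian of $K_i$ (only capping an annulus that already lands on $T_i$ with such a disk produces a meridian disk of $E(Y_i)$, which is precisely what $A_i$ provides and $B$ does not). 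To repair the proof, replace $B$ by the meridionally compressing annuli of the $T_i$ and run the slope comparison and outermost-arc boundary compression as above.
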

\begin{proof}
Assume for contradiction that 
$T \cap ( \bigcup_{i=1}^n T_i ) \neq \emptyset$ after 
minimizing the number of components of $T \cap ( \bigcup_{i=1}^n T_i )$ 
by an isotopy. 
Let $T_i$ intersects $T$. 
Let $A$ be a component of $T \cap Y_i$. 
Then by Lemma \ref{lem:BZ85} 
and the assumption that $K_i$ is a prime knot, 
$A$ is a cabling annulus for $K_i$. 
Hence $A$ intersects a meridionally compressing annulus $A_i$ 
for $T_i$. 
It follows that $A_i \cap T$ consists of non-empty proper arcs 
with end points on $\partial A_i \setminus \partial V$. 

Let $\delta \subset A_i$ be the disk cut off from $A_i$ by an outermost
arc $\alpha$ of $A_i \cap T$ in $A_i$. 
Let $A'$ be the component of $T \cap E(Y_i)$ containing $\alpha$. 
See Figure \ref{fig:proof_of_unique_2-decomp}. 
\begin{figure}[!hbt]
\centering
\includegraphics[width=10cm,clip]{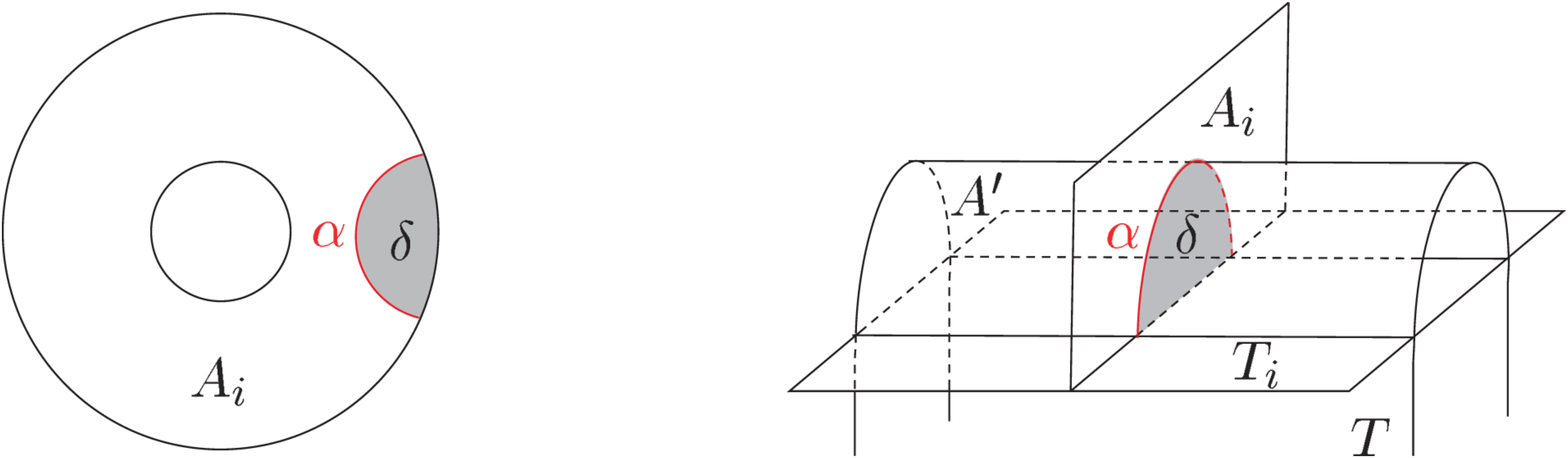}
\caption{}
\label{fig:proof_of_unique_2-decomp}
\end{figure}
By boundary-compressing $A'$ along $\delta$, we get a disk $D$ whose 
boundary bounds a disk $D'$ on $T_i$. 
Since a solid torus is irreducible, $D \cap D'$ bounds a 3-ball in 
$E(Y_i)$. 
This implies that $A'$ can 
be isotoped to $E(Y_i)$ in $E(V) \cap E(Y_i)$. 
This contradicts the minimality of $\#(T \cap ( \bigcup_{i=1}^n T_i ))$. 
\end{proof}

\begin{lemma}
\label{lem:maximal system spans disjoint knot complements}
Let $\{T_1, T_2, \ldots, T_n\}$ be a maximal set of mutually disjoint, 
mutually non-parallel, essential tori in $E(V)$ such that 
there exist peripherally compressing annuli $A_i$ for $T_i$ 
$(1 \leqslant i \leqslant n)$ 
with $A_j \cap A_k = \emptyset$, $A_j \cap T_k = \emptyset$ 
for $1 \leqslant j < k \leqslant n$. 
For each $i=1,2,\ldots, n$, 
let $Y_i$ be the region in $S^3$ spanned by 
$T_i$ such that $Y_i \cap V = \emptyset$. 
Then $Y_i \cap Y_j = \emptyset$ for 
$1 \leqslant i < j \leqslant n$. 
\end{lemma}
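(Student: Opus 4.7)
My plan is to argue by contradiction. Suppose there is a pair $i<j$ with $Y_i\cap Y_j\neq\emptyset$. Since $T_i$ and $T_j$ are disjoint tori in $S^3$ and neither $Y_i$ nor $Y_j$ contains $V$, the two closed regions must be nested, so either $Y_i\subset Y_j$ or $Y_j\subset Y_i$. In the first sub-case $T_i\subset\Int\thinspace Y_j$ while $\partial V\subset\Int\thinspace E(Y_j)$, so the connected annulus $A_i$, whose boundary circles lie on $T_i$ and on $\partial V$, must cross the separating torus $T_j$. This directly contradicts the hypothesis $A_i\cap T_j=\emptyset$, which holds because $i<j$.

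The remaining sub-case $Y_j\subset Y_i$ is where the real work lies, since the condition $A_j\cap T_i=\emptyset$ is not assumed when $j>i$ (and is in fact false by the same connectedness argument). I would first isotope $A_j$ rel $\partial A_j$ so as to minimize $|A_j\cap T_i|$. A standard innermost-circle argument, using that $T_i$ is incompressible in $E(V)$ and that $E(V)$ is irreducible, shows that after this minimization every component of $A_j\cap T_i$ is an essential simple closed curve on both $A_j$ and $T_i$; in particular these circles are mutually parallel on the annulus $A_j$. Let $A_{j,0}\subset A_j$ be the sub-annulus on the $T_j$-side of the innermost such circle. Then $A_{j,0}\subset Y_i$ has one essential boundary circle on $T_j$ and one on $T_i=\partial Y_i$. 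Since $T_i$ is essential in $E(V)$, the closed region $Y_i$ is not a solid torus, hence it is a knot exterior, and $A_{j,0}$ is a peripherally compressing annulus for the essential torus $T_j$ in the irreducible $3$-manifold $Y_i$.

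Applying Lemma \ref{lem:perhipherally compressing a torus} to $(Y_i,T_j,A_{j,0})$ yields an essential annulus $T_j^*$ in $Y_i$ with two parallel boundary circles on $T_i$. Lemma \ref{lem:BZ85}, applied to the core knot $K_i$ of $E(Y_i)$, then gives two possibilities: either $T_j^*$ is the cabling annulus of $K_i$, or $T_j^*$ extends to a decomposing $2$-sphere for $K_i$. In the cabling case, reversing the peripheral compression realizes $T_j$ as a torus built from the cabling structure of $K_i$, and a check of slopes forces $T_j$ to be either parallel to $T_i$ in $Y_i$ (contradicting non-parallelism within the family) or compressible in $E(V)$ (contradicting essentialness of $T_j$). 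In the decomposing-sphere case, combining the sphere with the outer portion $A_j\setminus\Int\thinspace A_{j,0}$ yields a new essential peripherally-compressible torus in $E(V)$ together with a compatible peripherally compressing annulus, and inserting this pair into the family with a suitable reindexing contradicts the maximality of $\{T_1,\ldots,T_n\}$.

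The step I expect to be the main obstacle is the bookkeeping in the cabling sub-case: one must identify precisely which torus $T_j$ becomes after reversing the peripheral compression of a cabling annulus in $Y_i$, and then confirm that this reconstruction really contradicts either the non-parallelism or the essentiality of $T_j$ in $E(V)$. Tracking the cabling slope on $T_i$ against the essential slope $T_j\cap A_{j,0}$ on $T_j$, and ruling out every configuration that would let $T_j$ remain essential and non-parallel to $T_i$, is where the detailed analysis will concentrate.
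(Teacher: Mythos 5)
Your first paragraph is exactly the paper's proof. The paper observes that two disjoint tori whose distinguished regions both miss $V$ can only intersect by being nested, and that the peripherally compressing annulus of the \emph{inner} torus runs from a torus lying in the interior of the outer region out to $\partial V$, hence must cross the outer torus, contradicting the assumed disjointness; it then invokes ``without loss of generality'' to cover both nestings. In other words, the paper reads the hypothesis as saying that each $A_j$ is disjoint from \emph{every} $T_k$ with $k\neq j$. That symmetric reading is the intended one---it is precisely what is verified in the application to Theorem \ref{thm:uniqueness of 2-decompositions}, where all the data attached to the index $i$ lie in the ball $B_i$ and the balls are pairwise disjoint---and under it your first paragraph already finishes the proof for both nestings.

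The program of your second and third paragraphs, which tries to dispose of the nesting $Y_j\subset Y_i$ ($i<j$) using only $A_i\cap T_j=\emptyset$, cannot be completed, and the failure is located exactly where you predicted. In the cabling sub-case, suppose $K_i$ is a cable of a non-trivial knot $K'$ and $T_j^*$ is the cabling annulus. Undoing the peripheral compression amounts to tubing $T_j^*$ along one of the two annuli of $T_i\setminus\partial T_j^*$, which produces the frontier of one of the two pieces of $Y_i$ cut along $T_j^*$; one piece is the cable space and the other is $E(K')$, so $T_j$ is recovered as a torus isotopic to $\partial E(K')$. That torus is essential in $E(V)$ and is \emph{not} parallel to $T_i$ (the region between them is a cable space, not $T^2\times I$), so neither horn of your dichotomy occurs and no contradiction is available. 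Indeed, this configuration is realized while satisfying the asymmetric hypothesis literally: let $K_i$ be a cable of $K'$, let $V$ be a slightly shrunken $N(K_i)$ with a neighborhood of a trivial properly embedded arc removed, and set $T_i=\partial E(K_i)$, $T_j=\partial E(K')$. A vertical annulus of the cable space, extended through $E(Y_i)\setminus \Int\thinspace V$ to $\partial V$, serves as $A_j$; a slope annulus in $E(Y_i)\setminus \Int\thinspace V$ serves as $A_i$; and $A_i\cap T_j=\emptyset$ holds automatically because $\Int\thinspace A_i$ lies in $E(V\cup Y_i)$ while $T_j\subset\Int\thinspace Y_i$. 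So the lemma with the strictly asymmetric hypothesis is false, and the second case must be handled by the same one-line separation argument as the first, using disjointness of the inner torus's annulus from the outer torus regardless of the order of the indices.
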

\begin{proof}
If $Y_i \cap Y_j \neq \emptyset$ for some $i, j$, 
we may assume without loss of generality that 
$Y_i \subset Y_j$ since every torus embedded in $S^3$ is 
separating. 
However, this is impossible since it is assumed that 
the compressing annulus $A_j$ does not intersect $T_i$. 
\end{proof}

\noindent {\it Proof of Theorem $\ref{thm:uniqueness of 2-decompositions}$. }
Let $\mathcal{S} = \{ S_1, S_2, \ldots, S_n \}$  and 
$\mathcal{S}' = \{ S'_1, S'_2, \ldots, S'_n \}$ be 
maximal unnested sets of 
knotted handle 2-decomposing spheres for 
a handlebody-knot $(S^3, V)$. 
Since they are unnested, 
each sphere $S_i$ ($S'_i$, respectively) bounds a 3-ball 
$B_i$ ($B'_i$) in $S^3$ such that 
$B_i \cap V \cong B^3$ 
($B'_i \cap V \cong B^3$, respectively) 
($1 \leqslant i \leqslant n$) and 
$B_i \cap B_j = \emptyset$ 
($B'_i \cap B'_j = \emptyset$, respectively) ($1 \leqslant i < j \leqslant n$). 
Each sphere $S_i$ 
($S'_i$, respectively) separates an annulus 
$A_i$ ($A'_i$, respectively) from $\partial V$.  
Let $T_i$ ($T'_i$, respectively) be an essential torus 
in $E(V)$ obtained by tubing 
$S_i \cap E(V)$ ($S'_i \cap E(V)$) along $A_i$ ($A'_i$, respectively). 
Let $Y_i$ ($Y'_i$, respectively) be the region in $S^3$ spanned by 
$T_i$ ($T'_i$, respectively) such that $Y_i \cap V = \emptyset$ 
($Y'_i \cap V = \emptyset$, respectively). 
It is easy to check that the set 
$\mathcal{T} = \{ T_1, T_2, \ldots, T_n \}$ 
(resp. $\mathcal{T}' = \{ T'_1, T'_2, \ldots, T'_n \}$, respectively) 
satisfies the assumption of Lemma 
\ref{lem:maximal system spans disjoint knot complements}. 
Therefore we have 
$Y_i \cap Y_j = \emptyset$ 
($Y'_i \cap Y'_j = \emptyset$, respectively) for 
$1 \leqslant i < j \leqslant n$. 
Moreover, by Schubert's theorem \cite{Sch49}, 
the core $K_i$ ($K'_i$, respectively) of $E(Y_i)$ 
($E(Y'_i)$, respectively) is prime for $1 \leqslant i \leqslant i$. 
Hence by Lemma 
\ref{lem:maximal system and another torus} that  
we have $\mathcal{T} = \mathcal{T}'$. 
This implies that $\mathcal{S}'$ is obtained by 
at most $n$ annulus-moves from $\mathcal{S}$.

\end{document}